\newcommand{\la}{\lambda}
\theoremstyle{plain}
\numberwithin{equation}{section}
\newtheorem{thm}{Theorem}[section]
\newtheorem{lem}[thm]{Lemma}
\theoremstyle{definition}
\newtheorem{ip}[thm]{Inverse Problem}
\newtheorem{example}[thm]{Example}
\theoremstyle{remark}
\begin{document}
\begin{center}
{\large\bf Uniform stability of the inverse Sturm-Liouville problem\\[0.1cm] with polynomials in a boundary condition}
\\[0.3cm]
{\bf Bondarenko N.P., Chitorkin E.E.} \\[0.2cm]
\end{center}

\vspace{0.5cm}

{\bf Abstract.} This paper deals with the Sturm-Liouville problem with singular potential of the Sobolev space $W_2^{-1}$ and polynomials of the spectral parameter in a boundary condition. We prove the uniform boundedness and the uniform stability for the inverse spectral problem in the general non-self-adjoint case.
It is remarkable that our stability estimates are valid for some cases with different degrees of the polynomials for two compared operators. 

\medskip

{\bf Keywords:} inverse spectral problem; Sturm--Liouville equation; uniform boundedness; uniform stability; distribution potential; eigenparameter-dependent boundary condition. 

\medskip

{\bf AMS Mathematics Subject Classification (2020):} 34A55 34B07 34B09 34B24 34L40    

\vspace{1cm}

\section{Introduction} \label{sec:intro}

In this paper, we consider the following boundary value problem $L = L(\sigma, r_1, r_2)$:
\begin{gather} \label{eqv1}
-y'' + q(x) y = \lambda y, \quad x \in (0, \pi), \\ \label{bc1}
y^{[1]}(0) = 0, \quad r_1(\la)y^{[1]}(\pi) + r_2(\la)y(\pi) = 0,
\end{gather}
where $q(x)$ is a complex-valued distribution potential of the class $W_2^{-1}(0,\pi)$, that is, $q = \sigma'$, $\sigma \in L_2(0,\pi)$, $\la$ is the spectral parameter, $r_1(\la)$ and $r_2(\la)$ are polynomials, $y^{[1]} := y' - \sigma(x) y$ is the so-called quasi-derivative.
 
We aim to prove the uniform boundedness and the uniform stability of an inverse spectral problem for \eqref{eqv1}--\eqref{bc1}.
  

Inverse problems of spectral analysis involve reconstructing a differential operators (e.g., Sturm--Liouville operators) from information about their spectra. Such kind of problems find applications in various fields of science and engineering, including quantum and classical mechanics, geophysics, electronics, chemistry, and nanotechnology. The most complete results in inverse spectral theory were obtained for the Sturm-Liouville operators with constant coefficients in the boundary conditions (see, e.g., the monographs \cite{Lev84, FY01, Mar11, Krav20} and references therein).

Spectral problems with eigenparameter-dependent boundary conditions arise in thermodynamics \cite{BHM15}, quantum mechanics \cite{Gran22}, string theory \cite{GKR16}, and other physical applications \cite{Ful99}.
Inverse problems with polynomial dependence on the spectral parameter in boundary conditions were investigated in \cite{Chu01, BindBr021, BindBr022, ChFr, FrYu, FrYu12, YangWei18, Gul19, Gul20-ams, Gul23, amp, LocSolv, ChitBondStabMult, Bond25}. Some studies focus on self-adjoint problems with Herglotz-Nevanlinna rational functions in boundary conditions \cite{BindBr021, BindBr022, YangWei18, Gul19, Gul20-ams, Bond25}, which can be reduced to polynomial cases. In contrast to those works, this paper is concerned with the non-self-adjoint case. A constructive solution for the inverse Sturm--Liouville problem with polynomial boundary conditions was developed by Freiling and Yurko \cite{FrYu, FrYu12} using the method of spectral mappings (see \cite{FY01}), but only for regular (integrable) potentials.

Recent research has extended spectral theory to differential operators with singular (distributional) coefficients (see, e.g., \cite{Gul19, SavShkal03, Sav10, SS13, SS14, Hryn03, Hryn11, Dj, Eck15, KVB24}). The method of spectral mappings was adapted for the Sturm-Liouville problem \eqref{eqv1}--\eqref{bc1} with distribution potential and polynomials in the boundary conditions in \cite{amp}.

Stability of inverse Sturm-Liouville problems was studied by many authors \cite{Sav10, SS13, Hryn11, Akt87, MW05, Bled12, BSY13, BondButTr17, BK19, MT19, XGY22, GMXA23}. We especially highlight the studies by Savchuk and Shkalikov \cite{Sav10, SS13} and by Hryniv \cite{Hryn11}, which establish the unconditional uniform stability. This means that the constants in stability estimates depend only on some lower and upper bounds for the spectral data and do not depend on the potentials. Such estimates appeared to be useful for studying the convergence of finite data approximations to the potential (see \cite{SS14}). However, there is a lack of uniform stability results for problems with eigenparameter-dependent boundary conditions. In \cite{LocSolv} and \cite{ChitBondStabMult} the authors proved local solvability and stability for the cases of simple and multiple eigenvalues, respectively. Nevertheless, the property of uniform stability provides stronger and more universal guarantees of the proximity of solutions. The uniform stability of the inverse Sturm-Liouville problem with rational Herglotz-Nevanlinna functions in boundary conditions was analyzed in \cite{Bond25}, but the approach of \cite{Bond25} has a number of limitations. In particular, it requires the self-adjointness of the problem and works for the class $\sigma \in W_2^{\alpha}[0,\pi]$, $\alpha > 0$, while the problem \eqref{eqv1}--\eqref{bc1} in this paper is non-self-adjoint and has a potential of higher singularity order: $\sigma \in L_2(0,\pi)$. Furthermore, the stability estimates in \cite{Bond25} are obtained for two boundary problems with the same degrees of the polynomials in the boundary conditions. The approach of this paper allows us to remove those limitations.

In this paper, we study the uniform boundedness and stability of the inverse Sturm-Liouville problem for \eqref{eqv1}--\eqref{bc1} in the non-self-adjoint case. The function $\sigma$ and the polynomials $r_1$, $r_2$ are recovered from the eigenvalues $\{\la_n\}_{n \ge 1}$ and the so-called weight numbers $\{ \alpha_n\}_{n \ge 1}$ being the residues of the Weyl function. 
Basing on the technique of our previous studies \cite{amp, LocSolv, BondR25}, we reduce this non-linear inverse problem to a linear equation (called the main equation) in the Banach space $m$ of bounded infinite sequences. A crucial role in our analysis is played by the operator $(E + \tilde H(x))$ participating in the main equation \eqref{main_eq}. We require that the operator $(E + \tilde H(x))$ is invertible on $m$ and that the inverse operator $(E + \tilde H(x))^{-1}$ is uniformly bounded. This requirement actually is a condition on the spectral data, since the operator $\tilde H(x)$ is constructed by using only $\{  \la_n, \alpha_n\}_{n \ge 1}$. 
Assuming the uniform boundedness of the inverse operator, we obtain the uniform boundedness and, subsequently, the unconditional uniform stability for the inverse spectral problem with polynomials in the boundary condition. A feature of our stability theorem is the validity for some cases with different degrees of polynomials for two problems being compared. We present some examples of this phenomenon and discuss some limitations, which are related to non-solvability of the main equation. 

The paper is organized as follows. In Section~\ref{sec:main}, we formulate the main results of this paper. 
Section~\ref{sec:bound} presents the construction of the main equation and the proof of the uniform boundedness for the inverse spectral problem. In Section~\ref{sec:stab}, relying on the results of Section~\ref{sec:bound}, we investigate the uniform stability of recovering $\sigma$, $r_1$, and $r_2$.
In Section~\ref{sec:solvability} we get some non-solvability conditions for the main equation and, consequently, for the inverse problem. Section~\ref{sec:ex} contains two examples of stability for problems of form \eqref{eqv1}--\eqref{bc1} with different degrees of the polynomials.

\section{Main results} \label{sec:main}

In this section, we give some notations and formulate the main theorems of our paper.

For regularization of equation \eqref{eqv1} (see \cite{SavShkal03}), we represent it in the following equivalent form:
$$
-(y^{[1]})' - \sigma(x) y^{[1]} - \sigma^2 y = \lambda y, \quad x \in (0,\pi),
$$
and consider its solutions in the class of absolutely continuous functions $y \in AC[0,\pi]$ such that $y^{[1]} \in AC[0,\pi]$.

The polynomials in the right boundary condition of the problem $L$ have the form
$$
r_1(\la) = \sum\limits_{n=0}^{P_1} c_n\la^n, \quad r_2(\la) = \sum\limits_{n=0}^{P_2} d_n\la^n, \quad P_1, P_2 \ge 0.
$$

Denote $p = \max\{P_1, P_2\}$ and suppose that $P_1 \ge P_2$. The opposite case can be studied similarly.
Let us normalize the polynomials. Without loss of generality, suppose that $P_1 = P_2 = p$ and $c_p=1$. The coefficients $d_p, d_{p-1}, \dots$ can be equal zero, so the polynomial $r_2(\la)$ can actually have a lower degree than $r_1(\la)$. Denote the set of such polynomial pairs $(r_1, r_2)$ by $R_p$.

In \cite{amp} it is shown that spectrum $\{\la_n\}_{n \ge 1}$ of the problem \eqref{eqv1}-\eqref{bc1} is a countable set of eigenvalues. Let all of the eigenvalues be simple. Then they can be numbered according to their asymptotic behavior:
\begin{gather} \label{eigen_asymp}
\rho_n = \sqrt{\la_n} = n - p - 1 + \varkappa_n, \quad n \ge 1, \quad \{\varkappa_n\}_{n \ge 1} \in l_2.
\end{gather}

Next, define the Weyl solution $\Phi(x, \la)$ as the solution of equation \eqref{eqv1} with the boundary conditions:
$$
\Phi^{[1]}(0, \la) = 1, \quad r_1(\la) \Phi^{[1]}(\pi, \la) + r_2(\la) \Phi(\pi, \la) = 0,
$$
and the Weyl function of the problem $L$:
$$
M(\la) = \Phi(0, \la).
$$

According to \cite{amp}, the Weyl function is meromorphic in $\la$ and admits the representation
$$
M(\la) = \sum\limits_{n=1}^{\infty} \dfrac{\alpha_n}{\la - \la_n},
$$
where $\alpha_n \in \mathbb C$ are called the (generalized) weight numbers. They satisfy the asymptotics
\begin{equation} \label{weight_asymp}
\alpha_n = \dfrac{2}{\pi} + \kappa_n, \quad \{\kappa_n\} \in l_2. 
\end{equation}

We will call the set $S = \{\la_n, \alpha_n\}_{n \ge 1}$ the {\it spectral data} of the boundary value problem $L$. So, we can consider the following inverse problem:
\begin{ip}
\label{ip1}
Given the spectral data $S$, find $\sigma \in L_2(0,\pi)$ and $(r_1, r_2) \in R_p$.
\end{ip}

Along with the problem $L$, let us introduce the model problem $\tilde L = L(0, \la^p, 0)$. We agree that, if a symbol $\gamma$ denotes an object related to $L$, then the symbol $\tilde \gamma$ with tilde will denote the analogous object related to $\tilde L$. 
Note that the first eigenvalue $\lambda = 0$ of $\tilde L$ has multiplicity $(p + 1)$. So, we define the spectral data of $\tilde L$ as follows:
\begin{gather}
\label{model_sd}
    \tilde\lambda_n = \left\{ \begin{aligned} 
0, \quad 1 \le n \le p, \\
(n-p-1)^2, \quad n > p; 
\end{aligned} \right. \qquad 
\tilde\alpha_n = \begin{cases} 
\frac{1}{\pi}, & n = 1, \\
0, & 2 \le n \le p + 1, \\
\frac{2}{\pi}, & n > p + 1.
\end{cases} 
\end{gather}
It means that the set $\tilde S$ has the following form:
$$
\tilde S = \Bigg\{ \bigg(0, \frac{1}{\pi}\bigg), \underbrace{(0,0), \dots, (0,0)}_p \Bigg\} \bigcup \Bigg\{\bigg((n-p-1)^2, \frac{2}{\pi}\bigg)\Bigg\}_{n > p+1}.
$$

For an integer $p \ge 0$, denote by $\mathcal S_p$ the set of all collections $S = \{\la_n, \alpha_n\}_{n \ge 1}$ (not necessarily being the spectral data of some problem $L$) such that the asymptotics \eqref{eigen_asymp} and \eqref{weight_asymp} are satisfied for $\rho_n := \sqrt{\la_n}$ ($\arg\rho_n \in [-\frac{\pi}{2}, \frac{\pi}{2})$) and $\alpha_n$ with some $\{\varkappa_n\}, \{\kappa_n\} \in l_2$. In order to study the uniform boundedness and the uniform stability of Inverse Problem~\ref{ip1}, we have to introduce some subsets of $\mathcal S_p$. 
So, let $\mathcal{B}_\Omega$ be defined as follows:
\begin{equation} \label{defBO}
\mathcal{B}_\Omega = \Bigg\{ S\in \mathcal S_p\colon \sqrt{\sum_{n=1}^{\infty}\xi_n^2}\le \Omega \Bigg\},
\quad \xi_n = |\rho_n - \tilde\rho_n| + |\alpha_n - \tilde\alpha_n|.
\end{equation}

Next, for each $S \in \mathcal{S}_p$, we construct the operator $(E + \tilde H(x))$ in the Banach space $m$ of bounded infinite sequences according to Section~\ref{sec:bound}. Then, we can fix $\Omega > 0$ and some real $K > 0$ and introduce the following set:
$$
\mathcal{B}_{\Omega, K} = \{S \in \mathcal{B}_{\Omega} : \|(E + \tilde H(x))^{-1}\|_{m \to m} \le K\}.
$$

The first result of our work is the following theorem on uniform boundedness:

\begin{thm} \label{thm_uni_bound}
For each $S \in \mathcal{B}_{\Omega, K}$ there exist unique $\sigma(x) \in L_2(0, \pi)$ and $(r_1, r_2) \in R_p$, such that $S$ are the spectral data of the problem $L(\sigma, r_1, r_2)$. Moreover,
$$
\|\sigma(x)\|_{L_2(0, \pi)} \le C(\Omega, K), \quad |c_j|  \le C(\Omega, K), \quad |d_j | \le C(\Omega, K),\quad  j = \overline{0, p}.
$$
\end{thm}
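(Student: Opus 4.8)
The plan is to reduce the assertion to the linear main equation \eqref{main_eq} constructed in Section~\ref{sec:bound}, which in the Banach space $m$ has the form $(E + \tilde H(x))\psi(x) = \tilde\psi(x)$, where $\tilde H(x)$ and the right-hand side $\tilde\psi(x)$ are built solely from the given collection $S$ and the fixed model data $\tilde S$. Existence and uniqueness are then immediate from the membership $S \in \mathcal{B}_{\Omega,K}$: by the very definition of $\mathcal{B}_{\Omega,K}$ we have $\|(E + \tilde H(x))^{-1}\|_{m \to m} \le K$ for every $x \in [0,\pi]$, so the main equation possesses the unique solution $\psi(x) = (E + \tilde H(x))^{-1}\tilde\psi(x) \in m$, depending continuously on $x$. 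Substituting $\psi$ into the reconstruction formulas of Section~\ref{sec:bound} produces candidate $\sigma$ and $(r_1, r_2) \in R_p$, and the verification that $S$ are indeed the spectral data of $L(\sigma, r_1, r_2)$ is exactly the solvability statement established there; uniqueness is inherited from the uniqueness of $\psi$ together with the injectivity of the reconstruction.

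The heart of the matter is the derivation of the bounds with a constant depending only on $\Omega$ and $K$. Since the entries of $\tilde H(x)$ and of $\tilde\psi(x)$ are expressed through $\tilde S$ and the data differences $\xi_n = |\rho_n - \tilde\rho_n| + |\alpha_n - \tilde\alpha_n|$ from \eqref{defBO}, I would first estimate the relevant Hilbert--Schmidt and $l_2$-type norms of these arrays by a function of $\sqrt{\sum_{n=1}^{\infty}\xi_n^2} \le \Omega$ alone, uniformly in $x$, using the asymptotics \eqref{eigen_asymp} and \eqref{weight_asymp}. This yields $\|\tilde\psi(x)\|_m \le C(\Omega)$ and, via the inverse operator bound, $\|\psi(x)\|_m \le K\,\|\tilde\psi(x)\|_m \le C(\Omega,K)$ for all $x \in [0,\pi]$.

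Next I would feed these estimates into the reconstruction formulas. The formula for the potential expresses the difference $\sigma - \tilde\sigma$ as a series in the components of $\psi$ weighted by the data differences; estimating this series in $L_2(0,\pi)$ by the Cauchy--Schwarz inequality together with the $l_2$-bounds just obtained gives $\|\sigma\|_{L_2(0,\pi)} \le C(\Omega,K)$. The coefficients $c_j, d_j$, $j = \overline{0,p}$, are recovered from finitely many spectral quantities attached to the cluster of model eigenvalues at $\lambda = 0$ (the part of $\tilde S$ of multiplicity $p+1$), so their uniform bounds follow from the same estimates, with the normalization $c_p = 1$ fixing the overall scale.

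The main obstacle I anticipate is precisely the uniform $L_2$-estimate of the recovered $\sigma$: one must show that the series representing it converges in $L_2(0,\pi)$ with a bound independent of the individual potential, simultaneously controlling the smooth remainder and the contributions of the degenerate model eigenvalues at $\lambda = 0$, all in terms of $\Omega$ and $K$ only. Because the potential belongs to the singular class $\sigma \in L_2$ (that is, $q \in W_2^{-1}$), these estimates have to be carried out at the level of the regularized, quasi-derivative formulation rather than for classical derivatives, which is where the technical bookkeeping concentrates; the other delicate point is isolating the polynomial coefficients cleanly from the potential within one and the same sequence $\psi$.
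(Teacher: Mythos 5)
Your overall skeleton (solve the main equation \eqref{main_eq} using the uniform bound on $(E+\tilde H(x))^{-1}$ from the definition of $\mathcal B_{\Omega,K}$, then feed $\psi(x)$ into the reconstruction formulas) is the same as the paper's, but two of your key steps are not actually arguments and would fail as stated. First, for $\|\sigma\|_{L_2(0,\pi)}$: the series \eqref{rec_sigma} does \emph{not} converge term by term, only ``with brackets'' --- the individual summands $\alpha_{kj}(1-2\tilde\varphi_{kj}\varphi_{kj})$ are merely bounded, not $l_2$, so a direct Cauchy--Schwarz on the $l_2$-bounds of $\psi$ cannot close the estimate. The paper's proof of Lemma~\ref{lem_sigma_diff} splits the series into seven sub-series $S_1,\dots,S_7$ designed so that each one is controlled either by the $l_2$-sequence $\{\xi_k\}$ or by the $l_1$-summability of the \emph{differences} $g_{k0}-g_{k1}$ and $\tilde\varphi_{k0}-\tilde\varphi_{k1}$ (Lemmas~\ref{lem_tilde_est}, \ref{lem_q_diff}, \ref{lem_est}); these difference estimates are exactly the cancellation mechanism your proposal acknowledges as ``the main obstacle'' but never supplies.

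Second, your claim that the coefficients $c_j,d_j$ ``are recovered from finitely many spectral quantities attached to the cluster of model eigenvalues at $\lambda=0$'' does not correspond to the structure of \eqref{rec_r1}--\eqref{rec_r2}: there $r_1(\la)$ and $r_2(\la)$ are given as infinite products and series running over the \emph{entire} spectrum, which merely happen to sum to polynomials of degree $p$. To bound the coefficients one must first prove a uniform pointwise bound $|r_1(\la)|\le C(\Omega,K)$ on a region staying away from all poles $\la_{ki}$, and then extract the coefficients by Lagrange interpolation at $p+1$ points. The nontrivial content of the paper's Lemma~\ref{lem_cj_diff} is precisely that such interpolation points can be chosen \emph{uniformly over the class} $\mathcal B_{\Omega,K}$: using \eqref{eigen_asymp} one shows $\Re\la_{k0}\ge -5\Omega^2$, places the points at $\la_i=-6\Omega^2-\vartheta_i$, and verifies $|\la_i-\la_{ki}|\ge\varepsilon(\Omega)k^2$ so that the infinite product and the series in \eqref{rec_r1} are uniformly controlled there. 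Without this selection argument the constant in your bound for $c_j$, $d_j$ would depend on the individual spectral data rather than only on $\Omega$ and $K$, which is the whole point of the theorem.
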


Next, let us introduce two different problems $L^{(1)} = L(\sigma^{(1)}, r_1^{(1)}, r_2^{(1)})$ and $L^{(2)} = L(\sigma^{(2)}, r_1^{(2)}, r_2^{(2)})$ with the same parameter $p$. We agree that an element $\gamma^{(j)}$ is related to the corresponding problem $L^{(j)}$ ($j = 1, 2$).

Denote 
\begin{equation} \label{defZ}
\delta_n = |\rho_n^{(1)} - \rho_n^{(2)}| + |\alpha_n^{(1)} - \alpha_n^{(2)}|, \quad Z = \Big( \sum\limits_{n=1}^{\infty}\delta_n^2 \Big)^{\frac{1}{2}}. 
\end{equation}

Our next important result is the following theorem on uniform stability:

\begin{thm} \label{thm_uni_stab}
Suppose that $S^{(1)}, S^{(2)} \in \mathcal{B}_{\Omega, K}$. Then, the corresponding solutions $(\sigma^{(1)}, r_1^{(1)}, r_2^{(1)})$ and $(\sigma^{(2)}, r_1^{(2)}, r_2^{(2)})$ of Inverse Problem~\ref{ip1} satisfy the uniform estimate
\begin{equation} \label{uni}
\|\sigma^{(1)} - \sigma^{(2)}\|_{L_2[0, \pi]} + \sum\limits_{n=0}^{p-1} |c_n^{(1)} - c_n^{(2)}| + \sum\limits_{n=0}^p |d_n^{(1)} - d_n^{(2)}| \le C(\Omega, K)Z.
\end{equation}
\end{thm}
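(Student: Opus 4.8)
The plan is to reduce the nonlinear stability estimate \eqref{uni} to a perturbation analysis of the linear main equation \eqref{main_eq}, exploiting the uniform bounds already supplied by Theorem~\ref{thm_uni_bound}. The core of the argument is a Lipschitz-type dependence of every data-dependent object (the operator $\tilde H(x)$, the right-hand side of the main equation, and the reconstruction formulas) on the spectral data, measured in the norm $Z$ of \eqref{defZ}.

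First, for $j = 1, 2$ I would write the main equation in the resolved form $\psi^{(j)}(x) = (E + \tilde H^{(j)}(x))^{-1} \tilde\psi(x)$, where $\tilde H^{(j)}(x)$ is the operator built from $S^{(j)}$, while the right-hand side $\tilde\psi(x)$ is assembled from the fixed model problem $\tilde L$ and is therefore common to both problems (any residual $S$-dependence of the right-hand side is controlled by the same estimates described below). Subtracting and applying the resolvent-type identity
\[
(E + \tilde H^{(1)})^{-1} - (E + \tilde H^{(2)})^{-1} = (E + \tilde H^{(1)})^{-1}\bigl(\tilde H^{(2)} - \tilde H^{(1)}\bigr)(E + \tilde H^{(2)})^{-1},
\]
I obtain
\[
\psi^{(1)}(x) - \psi^{(2)}(x) = (E + \tilde H^{(1)}(x))^{-1}\bigl(\tilde H^{(2)}(x) - \tilde H^{(1)}(x)\bigr)\psi^{(2)}(x).
\]
Since $S^{(1)}, S^{(2)} \in \mathcal{B}_{\Omega, K}$, both inverse operators are bounded by $K$ in $m$, and $\|\psi^{(2)}(x)\|_m \le C(\Omega, K)$ uniformly in $x$ by the construction in the proof of Theorem~\ref{thm_uni_bound}. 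Thus the whole estimate hinges on the key bound
\[
\sup_{x \in [0,\pi]} \|\tilde H^{(1)}(x) - \tilde H^{(2)}(x)\|_{m \to m} \le C(\Omega)\, Z,
\]
which would immediately give $\sup_x \|\psi^{(1)}(x) - \psi^{(2)}(x)\|_m \le C(\Omega, K) Z$.

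To establish this key bound I would return to the explicit entries $\tilde H(x) = [\tilde H_{nk}(x)]$ from Section~\ref{sec:bound}. Recalling that the $m \to m$ norm equals $\sup_n \sum_k |\tilde H_{nk}(x)|$, the difference $\tilde H^{(1)}_{nk} - \tilde H^{(2)}_{nk}$ splits, by the product structure of the entries, into terms each carrying one factor $\delta_n$ or $\delta_k$ from \eqref{defZ} together with remaining factors that stay bounded on $\mathcal{B}_\Omega$ thanks to the asymptotics \eqref{eigen_asymp}, \eqref{weight_asymp} and the definition of $\mathcal{B}_\Omega$. Summing over $k$ and applying the Cauchy--Schwarz inequality converts $\sum_k (\text{decay}_k)\,\delta_k$ into $C\bigl(\sum_k \delta_k^2\bigr)^{1/2} = C Z$, using the quadratic summability of the model-solution factors. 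I expect this entrywise Lipschitz analysis, carried out uniformly in $x$ and summed to the operator norm, to be the main obstacle: it demands the precise asymptotic description of $\tilde H$ and a careful separation of diagonal and off-diagonal contributions so that the $l_2$-structure of the data differences can be fully exploited.

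Finally, I would transfer the estimate on $\psi^{(1)} - \psi^{(2)}$ to the three quantities in \eqref{uni}. The reconstruction formula for $\sigma$ from Section~\ref{sec:bound} expresses $\sigma(x)$ as a series in the components of $\psi(x)$ and the spectral data; taking the difference, separating the part governed by $\psi^{(1)} - \psi^{(2)}$ from the part depending directly on the data differences, and estimating the $L_2[0,\pi]$ norm by Parseval's identity and Cauchy--Schwarz yields $\|\sigma^{(1)} - \sigma^{(2)}\|_{L_2[0,\pi]} \le C(\Omega, K) Z$. For the polynomial coefficients, the numbers $c_n$ and $d_n$ are recovered from $\psi(\pi)$ and the boundary functional through a finite linear system that is uniformly invertible on $\mathcal{B}_{\Omega, K}$ (its solvability and the bound on its solution being guaranteed by Theorem~\ref{thm_uni_bound}); hence $\sum_{n=0}^{p-1} |c_n^{(1)} - c_n^{(2)}| + \sum_{n=0}^{p} |d_n^{(1)} - d_n^{(2)}| \le C(\Omega, K)\bigl(\|\psi^{(1)}(\pi) - \psi^{(2)}(\pi)\|_m + Z\bigr) \le C(\Omega, K) Z$. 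Adding the three estimates gives \eqref{uni}.
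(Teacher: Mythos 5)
Your top-level strategy --- Lipschitz dependence of $\tilde H(x)$ and $\tilde\psi(x)$ on the spectral data, a resolvent identity for the main equation, then propagation through the reconstruction formulas --- is exactly the paper's; in particular your key bound $\|\tilde H^{(1)}(x)-\tilde H^{(2)}(x)\|_{m\to m}\le C(\Omega)Z$ is item~3 of Lemma~\ref{lem_tilde_est_12} (and note that $\tilde\psi^{(1)}\ne\tilde\psi^{(2)}$, since $\tilde\psi$ involves $\tilde\varphi(x,\la_{n0})$; this is item~2 of the same lemma, so your parenthetical caveat is essential, not residual). The genuine gaps are in the propagation step. For $\sigma$: the series \eqref{rec_sigma} is $\sum_k\sum_{j=0,1}(-1)^j\alpha_{kj}(1-2\tilde\varphi_{kj}\varphi_{kj})$, whose individual terms do not decay; it converges only through cancellation between the $j=0$ and $j=1$ terms. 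A sup-norm bound $\sup_x\|\psi^{(1)}(x)-\psi^{(2)}(x)\|_m\le CZ$ therefore does not control $\|\sigma^{(1)}-\sigma^{(2)}\|_{L_2(0,\pi)}$, and ``Parseval plus Cauchy--Schwarz'' cannot be invoked directly. The paper needs the finer structure of the differences $g_{ni}=\tilde\varphi_{ni}-\varphi_{ni}$: an $l_2$-estimate of order $Z$ for $\{g^{(1)}_{n1}-g^{(2)}_{n1}\}$ and an $l_1$-estimate for the second differences $\{g^{(1)}_{n0}-g^{(2)}_{n0}-g^{(1)}_{n1}+g^{(2)}_{n1}\}$ (Lemmas~\ref{lem_qt_12}--\ref{lem_g_12}), obtained by showing that $\mathcal A=\tilde Q^{(1)}(x)(T^{(1)})^{-1}-\tilde Q^{(2)}(x)(T^{(2)})^{-1}$ maps $m$ into $l_2$ with norm $O(Z)$ and that the paired differences of its output are $l_1$-summable. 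Your sketch contains no analogue of these estimates, and without them the seven-term decomposition of $\sigma^{(1)}-\sigma^{(2)}$ in Lemma~\ref{lem_sigma_est_12} cannot be closed.

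For the polynomial coefficients the situation is similar: the $c_n,d_n$ are not read off from ``$\psi(\pi)$ and the boundary functional through a finite linear system.'' The values $r_1(\la)$ are given by \eqref{rec_r1} as a finite product times an infinite product $\prod_{k>p}\frac{\la-\la_{k0}}{\la-\la_{k1}}$ times $\bigl(1-\sum_k\cdots\bigr)$, so to get $|r_1^{(1)}(\la_i)-r_1^{(2)}(\la_i)|\le CZ$ one must estimate the difference of two infinite products and of two infinite series by $O(Z)$ --- the bulk of Lemma~\ref{diff_cdj_12} --- and one must choose the $p+1$ interpolation points uniformly over $\mathcal B_{\Omega,K}$, bounded away from all $\la_{ni}^{(j)}$ of both problems (Step~2 of Lemma~\ref{lem_cj_diff}). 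Only then does Lagrange interpolation, which is the only ``finite linear system'' in sight, deliver the coefficient bounds. These two omissions are precisely where essentially all of the work of Section~\ref{sec:stab} lies.
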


Note that the class $\mathcal S_p$ of spectral data contains collections $S = \{ \la_n, \alpha_n \}_{n \ge 1}$ with multiple eigenvalues and/or zero weight numbers. In fact, such cases correspond to polynomials $r_1(\la)$ and $r_2(\la)$, which are not relatively prime. After removing their common zeros, one arrives at the problem \eqref{eqv1}-\eqref{bc1} with polynomials of lower degrees. Our main results (Theorems~\ref{thm_uni_bound} and \ref{thm_uni_stab}) do not exclude this case. Thus, the uniform stability estimate \eqref{uni} can hold for problems $L^{(1)}$ and $L^{(2)}$ with different degrees of polynomials. This issue is discussed in more detail in Sections~\ref{sec:solvability} and~\ref{sec:ex}.

\section{Uniform boundedness} \label{sec:bound}

In this section, we prove Theorem~\ref{thm_uni_bound} on the uniform boundedness of Inverse Problem~\ref{ip1}. For this purpose, we reduce the inverse problem to a linear equation in the Banach space $m$ (the so-called main equation) and use the reconstruction formulas from \cite{amp} to get the uniform estimates for the solution. Note that our construction of the main equation \eqref{main_eq} differs from the previous studies \cite{amp, LocSolv}. Specifically, we apply the modification from \cite{BondR25} to make the components $\tilde H_{nk}(x)$ of the operator $\tilde H(x)$ to be continuous with respect to the spectral data of $L$. The latter feature is essential for investigation of the uniform stability. Furthermore, since we consider the singular potential $q \in W_2^{-1}(0,\pi)$, then obtaining the desired estimates requires more complicated technique, then in the regular case $q \in L_2(0,\pi)$. For the polynomials $r_1(\la)$ and $r_2(\la)$ in the boundary condition, we first get estimates on a finite set of specially chosen points and then apply the interpolation argument.

Let $\varphi(x, \la)$ be the solution of equation~\eqref{eqv1} with initial conditions $\varphi(0, \la) = 1$, $\varphi^{[1]}(0, \la) = 0$. Introduce the notations
\begin{gather*}
\la_{n0} = \la_n, \quad \rho_{n0} = \rho_n, \quad \alpha_{n0} = \alpha_n, \quad\la_{n1} = \tilde\la_n, \quad \rho_{n1} = \tilde\rho_n, \quad \alpha_{n1} = \tilde\alpha_n, \\
\varphi_{ni}(x) = \varphi(x, \la_{ni}), \quad \tilde\varphi_{ni}(x) = \tilde\varphi(x, \la_{ni}), \quad n \ge 1, \quad i = 0, 1,
\end{gather*}
and define the function
\begin{gather}
\label{D_def}
D(x, \la, \mu) = \int\limits_0^x {\varphi(t, \la)\varphi(t, \mu)} \, dt.
\end{gather}

Clearly, $\varphi(x, \la)$ is entire in $\la$ and $D(x, \la, \mu)$ is entire in $\la$ and $\mu$ for each fixed $x \in [0,\pi]$.

\begin{lem}
\label{prop_varphi}
The following relation holds:
\begin{align} \nonumber
\varphi(x, \la) = & \tilde \varphi(x, \la) - \sum\limits_{k=1}^{p+1}\alpha_{k0}\tilde D(x, \la, \la_{k0})\varphi_{k0}(x) + \dfrac{1}{\pi}\tilde D(x, \la, 0)\varphi(x, 0) \\ \label{servarphi}
& +\sum\limits_{k=p+2}^{\infty} \bigg( \alpha_{k1}\tilde D(x, \la, \la_{k1})\varphi_{k1}(x) - \alpha_{k0}\tilde D(x, \la, \la_{k0})\varphi_{k0}(x) \bigg),
\end{align}
where the series converges absolutely and uniformly with respect to $x \in [0, \pi]$ and $\la$ on compact sets.
\end{lem}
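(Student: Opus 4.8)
The plan is to obtain \eqref{servarphi} by the method of spectral mappings, i.e.\ to reduce it to a residue computation for the transition matrix between the fundamental systems of $L$ and $\tilde L$. First I would introduce the second solution $S(x,\la)$ of \eqref{eqv1} with $S(0,\la)=0$, $S^{[1]}(0,\la)=1$ (and its model analogue $\tilde S$), so that the Weyl solutions factor as $\Phi=S+M(\la)\varphi$ and $\tilde\Phi=\tilde S+\tilde M(\la)\tilde\varphi$. Since $\{\varphi,\Phi\}$ and $\{\tilde\varphi,\tilde\Phi\}$ are fundamental systems with Wronskians normalized to $1$, there is a transition matrix $P(x,\la)$, entire in $x$, with
\[
\varphi(x,\la)=P_{11}(x,\la)\,\tilde\varphi(x,\la)+P_{12}(x,\la)\,\tilde\varphi^{[1]}(x,\la),
\]
where $P_{11}=\varphi\tilde\Phi^{[1]}-\Phi\tilde\varphi^{[1]}$ and $P_{12}=\Phi\tilde\varphi-\varphi\tilde\Phi$.

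Substituting $\Phi=S+M\varphi$ and $\tilde\Phi=\tilde S+\tilde M\tilde\varphi$ splits each of $P_{11}$, $P_{12}$ into an entire part plus a part proportional to $M$ or $\tilde M$; hence, as functions of the spectral parameter, $P_{11}(x,\cdot)$ and $P_{12}(x,\cdot)$ are meromorphic with only simple poles, located at the eigenvalues $\{\la_n\}$ (poles of $M$) and the model points $\{\tilde\la_n\}$ (poles of $\tilde M$). Using that $\alpha_n$ and $\tilde\alpha_n$ are the residues of $M$ and $\tilde M$, a direct computation gives
\[
\operatorname{Res}_{\mu=\la_n}P_{11}=-\alpha_n\varphi(x,\la_n)\tilde\varphi^{[1]}(x,\la_n),\qquad \operatorname{Res}_{\mu=\la_n}P_{12}=\alpha_n\varphi(x,\la_n)\tilde\varphi(x,\la_n),
\]
and the same formulas with $-\tilde\alpha_n$, $\tilde\la_n$ at the model poles. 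The key point is that, after division by $(\mu-\la)$, the Wronskian identity $(\la-\mu)\tilde D(x,\la,\mu)=\tilde\varphi(x,\la)\tilde\varphi^{[1]}(x,\mu)-\tilde\varphi^{[1]}(x,\la)\tilde\varphi(x,\mu)$ — which follows from \eqref{D_def} by Lagrange's identity — combines the $P_{11}$- and $P_{12}$-residues at each pole into a single term $\pm\alpha\,\tilde D(x,\la,\cdot)\,\varphi(x,\cdot)$ of exactly the form in \eqref{servarphi}.

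To assemble the residues I would apply Cauchy's theorem to $\Psi(x,\la,\mu):=P_{11}(x,\mu)\tilde\varphi(x,\la)+P_{12}(x,\mu)\tilde\varphi^{[1]}(x,\la)$ over expanding contours $\gamma_N$, chosen by means of the asymptotics \eqref{eigen_asymp} to stay away from all $\la_n,\tilde\la_n$. Since $\Psi(x,\la,\la)=\varphi(x,\la)$, the residue theorem gives $\frac{1}{2\pi i}\oint_{\gamma_N}\frac{\Psi(x,\la,\mu)}{\mu-\la}\,d\mu=\varphi(x,\la)+\sum_j(\mu_j-\la)^{-1}[\,\cdots]$, the sum being over the poles inside $\gamma_N$. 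On the other hand, because $L$ and $\tilde L$ share the same leading asymptotics, one has $P_{11}(x,\mu)\to1$ and $P_{12}(x,\mu)\to0$ as $\mu\to\infty$ along $\gamma_N$, with rate $O(|\mu|^{-1/2})$ (the leading term $\tfrac{1}{\sqrt\la}\cot(\pi\sqrt\la)$ of $M$ and $\tilde M$ cancels in $M-\tilde M$); consequently the same integral tends to $\tilde\varphi(x,\la)$. Equating the two evaluations and letting $N\to\infty$ yields \eqref{servarphi} once the residues are written through $\tilde D$: the term $\frac1\pi\tilde D(x,\la,0)\varphi(x,0)$ is the contribution of the model pole at $0$ with residue $\tilde\alpha_1=\frac1\pi$, while the coincident model points $\tilde\la_2=\cdots=\tilde\la_{p+1}=0$ contribute nothing since $\tilde\alpha_2=\cdots=\tilde\alpha_{p+1}=0$.

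It remains to justify the grouping and the convergence statement. The finitely many eigenvalues $\la_1,\dots,\la_{p+1}$ (clustered near $0$) are kept as an explicit finite sum; for $k\ge p+2$ one pairs the model term $\tilde\alpha_k\tilde D(x,\la,\tilde\la_k)\varphi(x,\tilde\la_k)$ with $\alpha_k\tilde D(x,\la,\la_k)\varphi(x,\la_k)$, which is necessary because $\tilde D(x,\la,\la_k)=O(k^{-1})$ makes the individual terms only $O(k^{-1})$ and thus non-summable. Estimating the paired difference through the $\rho$-derivative of $\tilde D(x,\la,\rho^2)\varphi(x,\rho^2)$, which is again $O(k^{-1})$ by the oscillatory-integral decay of $\tilde D$, shows that each paired term is $O(\xi_k k^{-1})$ uniformly for $x\in[0,\pi]$ and $\la$ on compact sets; hence $\sum_k\xi_k k^{-1}\le\|\{\xi_k\}\|_{l_2}\,\|\{k^{-1}\}\|_{l_2}<\infty$ by Cauchy--Schwarz gives the claimed absolute and uniform convergence. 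I expect the main obstacle to be analytic rather than algebraic: rigorously establishing the uniform limits $P_{11}\to1$, $P_{12}\to0$ with enough decay to pass the contour integrals to the limit, in the distributional setting $\sigma\in L_2$ where the remainders in the asymptotics of $\varphi$ and $\tilde D$ are controlled only through oscillatory-integral estimates. This is exactly the step that is more delicate than in the regular case and where the bounds of \cite{amp} must be invoked with care.
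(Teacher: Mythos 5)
Your argument is correct and is in substance the same as the paper's: the transition-matrix/contour construction you carry out is exactly the content of \cite[Lemma~4.2]{amp}, which the paper simply cites to obtain the representation \eqref{varphi_amp}, after which both proofs conclude by the residue theorem using \eqref{seriesMt} and the same pairing of the $k \ge p+2$ terms to secure absolute and uniform convergence. The only caveat is the one you already flag yourself: the decay of $P_{11}-1$ and $P_{12}$ along the expanding contours in the $W_2^{-1}$ setting must be taken from the estimates of \cite{amp} rather than from the classical regular-potential asymptotics.
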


\begin{proof}
    Define the following regions:
    \begin{itemize}
        \item $\Xi = \{ \la = \rho^2: 0 < \Im \rho < \tau \}$, where $\tau$ is chosen so that $\la_{ni} \in \Xi$ for $n \ge 1$, $i=0,1$;
        \item $\zeta_N = \Big\{ \la : |\la| < \big(N+\frac{1}{2} \big)^2 \Big\}$, $N \in \mathbb N$;
    \end{itemize}
    and let $\Gamma_N$ be the border of the region $\Xi \cap \zeta_N$.

    Using \cite[Lemma~4.2]{amp}, we get that
    \begin{gather}
        \label{varphi_amp}
        \varphi(x, \la) = \tilde\varphi(x, \la) + \oint\limits_{\Gamma_N} \varphi(x, \mu)\tilde D(x, \la, \mu)\big( \tilde M(\mu) - M(\mu) \big) \, d\mu + \varepsilon_N(x, \la),
    \end{gather}
    where $\varepsilon_N(x, \la) \to 0$ as $N \to \infty$.

    Now we need to apply the Residue theorem. In view of \eqref{model_sd}, the Weyl function $\tilde M(\la)$ can be represented by the series
	\begin{equation} \label{seriesMt}
	\tilde M(\la) = \frac{1}{\pi \la} + \sum_{n = p+2}^{\infty} \frac{\alpha_{n1}}{\la- \la_{n1}}.
	\end{equation}

    Thus, the integrand in \eqref{varphi_amp} has only simple poles at $\la_{ni}$, $n \ge 1$, $i=0,1$. Consequently, we have
\begin{align*}
   & \mathop\mathrm{Res}\limits_{\mu=\la_{n0}} \varphi(x, \mu)\tilde D(x, \la, \mu)\big( \tilde M(\mu) - M(\mu) \big) = - \alpha_{n0}\tilde D(x, \la, \la_{n0}) \varphi_{n0}(x), \quad n\ge 1, \\
   & \mathop\mathrm{Res}\limits_{\mu=\la_{n1}} \varphi(x, \mu)\tilde D(x, \la, \mu)\big( \tilde M(\mu) - M(\mu) \big) = \alpha_{n1}\tilde D(x, \la, \la_{n1}) \varphi_{n1}(x), \quad n > p+1, \\
   & \mathop\mathrm{Res}\limits_{\mu=\la_{11}} \varphi(x, \mu)\tilde D(x, \la, \mu)\big( \tilde M(\mu) - M(\mu) \big) = \alpha_{11}\tilde D(x, \la, \la_{11}) \varphi_{11}(x) = \dfrac{1}{\pi} \tilde D(x, \la, 0)\varphi (x, 0).
\end{align*}
    
This together with \eqref{varphi_amp} yield the claim.
    
\end{proof}

Next, denote $\hat\rho_n = \rho_{n0} - \rho_{n1} = \rho_n - \tilde\rho_n$. For $x \in [0,\pi]$, introduce the following infinite column vectors $\varphi(x)$, $\tilde\varphi(x)$ and 
the infinite matrix $\tilde Q(x)$:
\begin{gather}
\notag
\varphi_n(x) = 
\begin{pmatrix}
\varphi_{n0}(x), & \varphi_{n1}(x)
\end{pmatrix}^T, \quad
\tilde \varphi_n(x) = 
\begin{pmatrix}
\tilde \varphi_{n0}(x), & \tilde \varphi_{n1}(x)
\end{pmatrix}^T,\\
\label{tildeQ_def}
\tilde Q_{ni, kj}(x) = \alpha_{kj}\tilde D(x, \rho_{ni}^2, \rho_{kj}^2), \quad
\tilde Q_{nk}(x) = 
\begin{pmatrix}
\tilde Q_{n0, k0}(x) & -\tilde Q_{n0, k1}(x) \\
\tilde Q_{n1, k0}(x) & -\tilde Q_{n1, k1}(x)
\end{pmatrix}, \\
\notag
\varphi(x) = \begin{pmatrix}
\varphi_{n}(x)
\end{pmatrix}_{n \ge 1}, \quad
\tilde\varphi(x) = \begin{pmatrix}
\tilde\varphi_{n}(x)
\end{pmatrix}_{n \ge 1} \quad
\tilde Q(x) =
\begin{pmatrix}
\tilde Q_{nk}(x)
\end{pmatrix}_{n, k \ge 1}.
\end{gather}

Then, due to Lemma~\ref{prop_varphi} we can get
\begin{equation} \label{relvvQ}
\tilde \varphi(x) = (E+\tilde Q(x))\varphi(x), 
\end{equation}
where $E$ is the identity operator.
Note that the series in \eqref{servarphi} converges only ``with brackets''.
To obtain absolute convergence, we need to modify equation \eqref{relvvQ} using the following transform induced by the block row vector $T = \big( T_n \big)_{n \ge 1}$ for $\hat\rho_n \neq 0$ defined as
\begin{gather}
\label{Tn_def}
T_n = 
\begin{pmatrix}
\hat\rho_n^{-1} & -\hat\rho_n^{-1} \\
0 & 1
\end{pmatrix}, \quad
T_n^{-1} = 
\begin{pmatrix}
\hat\rho_n & 1 \\
0 & 1
\end{pmatrix}.
\end{gather}
Furthermore, denote
\begin{gather*}
    \dot{\tilde\varphi}_{n1}(x) = \dfrac{\partial}{\partial\rho}\tilde\varphi(x, \rho^2)\bigg|_{\rho = \rho_{n1}}, \quad  \dot{\varphi}_{n1}(x) = \dfrac{\partial}{\partial\rho}\varphi(x, \rho^2)\bigg|_{\rho = \rho_{n1}},\\
    \dot{\tilde Q}_{n0, kj}(x) = \dfrac{\partial}{\partial\rho} \tilde Q(x, \rho^2, \la_{kj})\bigg|_{\rho=\rho_{n0}}.
\end{gather*}

Now introduce the next column vectors $\psi(x)$, $\tilde\psi(x)$ and the operator $\tilde H(x)$:
\begin{gather*}
\psi_n(x) = \begin{pmatrix}
\psi_{n0}(x)\\ \psi_{n1}(x)
\end{pmatrix} = \left\{ \begin{aligned} 
\begin{pmatrix}
\hat\rho_n^{-1} & -\hat\rho_n^{-1} \\
0 & 1
\end{pmatrix}
\begin{pmatrix}
\varphi_{n0}(x) \\
\varphi_{n1}(x)
\end{pmatrix} = T_n\varphi_n(x), \quad \hat\rho_n \neq 0, \\
\begin{pmatrix}
\dot\varphi_{n1}(x) \\
\varphi_{n1}(x)
\end{pmatrix}, \quad \hat\rho_n = 0,
\end{aligned} \right. \\
\tilde\psi_n(x) = \begin{pmatrix}
\tilde\psi_{n0}(x) \\
\tilde\psi_{n1}(x)
\end{pmatrix} = \left\{ \begin{aligned} 
\begin{pmatrix}
\hat\rho_n^{-1} & -\hat\rho_n^{-1} \\
0 & 1
\end{pmatrix}
\begin{pmatrix}
\tilde\varphi_{n0}(x) \\
\tilde\varphi_{n1}(x)
\end{pmatrix} = T_n\tilde\varphi_n(x), \quad \hat\rho_n \neq 0, \\
\begin{pmatrix}
\dot{\tilde\varphi}_{n1}(x) \\
\tilde\varphi_{n1}(x)
\end{pmatrix}, \quad \hat\rho_n = 0;
\end{aligned} \right. \\
\end{gather*}
\begin{align}
\notag
\tilde H_{nk}(x) = &
\begin{pmatrix}
\tilde H_{n0, k0}(x) & \tilde H_{n0, k1}(x) \\
\tilde H_{n1, k0}(x) & \tilde H_{n1, k1}(x)
\end{pmatrix} = \\ \label{Hnk}
& \left\{ \begin{aligned} 
\begin{pmatrix}
\hat\rho_n^{-1} & \hat\rho_n^{-1} \\
0 & 1
\end{pmatrix}
\begin{pmatrix}
\tilde Q_{n0, k0}(x) & -\tilde Q_{n0, k1}(x) \\
\tilde Q_{n1, k0}(x) & -\tilde Q_{n1, k1}(x)
\end{pmatrix}
\begin{pmatrix}
\hat\rho_k & 1 \\
0 & 1
\end{pmatrix} = T_n\tilde Q_{nk}(x)T_k^{-1}, \quad \hat\rho_n \neq 0, \\
\begin{pmatrix}
\dot{\tilde Q}_{n0, k0}(x) & -\dot{\tilde Q}_{n0, k1}(x) \\
\tilde Q_{n1, k0}(x) & -\tilde Q_{n1, k1}(x)
\end{pmatrix}T_k^{-1}, \quad \hat\rho_n = 0;
\end{aligned} \right. 
\end{align}
\begin{gather*}
\psi(x) = \begin{pmatrix}
\psi_{n}(x)
\end{pmatrix}_{n \ge 1}, \quad
\tilde\psi(x) = \begin{pmatrix}
\tilde\psi_{n}(x)
\end{pmatrix}_{n \ge 1} \quad
\tilde H(x) =
\begin{pmatrix}
\tilde H_{nk}(x)
\end{pmatrix}_{n, k \ge 1}.
\end{gather*}

Let $m$ be the Banach space of bounded infinite sequences $a = (a_{ni})_{n \ge 1, i = 0,1}$ with the norm $\| a \|_m = \sup |a_{ni}|$.
Using the above notations, we get from \eqref{relvvQ}, that for each fixed $x \in [0, \pi]$ the vector $\psi(x)$ satisfies the so-called main equation of Inverse Problem~\ref{ip1}:
\begin{gather}
\label{main_eq}
\tilde\psi(x) = (E+\tilde H(x))\psi(x)
\end{gather}
in the Banach space $m$. Also, the vectors $\tilde\psi(x)$ and $\psi(x)$ belong to $m$, and $\tilde H(x)$ is a bounded linear operator from $m$ to $m$.

Below, we use the main equation \eqref{main_eq} to obtain uniform estimates for several auxiliary functions and, finally, for the coefficients $\sigma(x)$, $r_1(\la)$, and $r_2(\la)$
of the problem $L$.

\begin{lem} \label{lem_tilde_est}
For $S \in \mathcal{B}_\Omega$ the following estimates hold:
\begin{enumerate}
    \item $|\tilde\varphi_{ni}(x)| \le C(\Omega)$,
    \item $|\tilde\varphi_{n0}(x) - \tilde\varphi_{n1}(x)| \le C(\Omega)\xi_n$,
    \item $|\tilde\varphi^{[1]}_{ni}(\pi)| \le C(\Omega)n\xi_n$,
    \item $|\tilde\varphi^{[1]}_{n0}(x) - \tilde\varphi^{[1]}_{n1}(x)| \le C(\Omega)n\xi_n$,
    \item $|\tilde\psi_{ni}(x)| \le C(\Omega)$,
    \item $|\tilde\psi^{[1]}_{ni}(x)| \le C(\Omega)n$,
    \item $|\tilde H_{ni, kj}(x)| \le \dfrac{C(\Omega)\xi_k}{|n-k|+1}$,
\end{enumerate}
where $n, k \ge 1$, $i, j = 0, 1$, $x \in [0, \pi]$. Then
$$
\|\tilde \psi(x)\|_m \le C(\Omega), \quad \|\tilde H(x)\|_{m \to m} \le C(\Omega), \quad x \in [0,\pi].
$$
\end{lem}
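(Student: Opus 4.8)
The plan is to exploit the fact that the model problem $\tilde L = L(0, \la^p, 0)$ has zero potential, so that $\tilde\varphi(x,\la) = \cos(\rho x)$ with $\la = \rho^2$, the quasi-derivative reduces to $\tilde\varphi^{[1]} = \tilde\varphi'$, and $\tilde D(x,\la,\mu)$ is the explicit elementary integral $\frac12\big(\frac{\sin((\rho-\nu)x)}{\rho-\nu} + \frac{\sin((\rho+\nu)x)}{\rho+\nu}\big)$ with $\mu = \nu^2$. I would first record the uniform control of imaginary parts: since $\tilde\rho_n$ is real and $\xi_n \ge |\rho_n - \tilde\rho_n|$, one has $|\Im\rho_{ni}| \le \xi_n \le \Omega$, so every factor of the type $\cosh(\pi\Im\rho_{ni})$ is bounded by $C(\Omega)$; this gives estimate 1 at once. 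For estimates 2 and 4 I would apply the mean value theorem in the variable $\rho$ to $\cos(\rho x)$ and to $-\rho\sin(\rho x) = \tilde\varphi^{[1]}$, producing the \emph{sharpened} bounds $|\tilde\varphi_{n0}(x) - \tilde\varphi_{n1}(x)| \le C(\Omega)|\hat\rho_n|$ and $|\tilde\varphi^{[1]}_{n0}(x) - \tilde\varphi^{[1]}_{n1}(x)| \le C(\Omega)n|\hat\rho_n|$ (the extra factor $n$ coming from $\partial_\rho(\rho\sin(\rho x)) = \sin(\rho x) + \rho x\cos(\rho x)$ with $|\rho| \le C(\Omega)n$ on the segment between $\rho_{n0}$ and $\rho_{n1}$), and then bound $|\hat\rho_n| \le \xi_n$. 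Estimate 3 is precisely where evaluation at $x = \pi$ is used: because $\tilde\rho_n$ is a nonnegative integer for every $n$, we have $\sin(\tilde\rho_n\pi) = 0$, whence $\tilde\varphi^{[1]}_{n1}(\pi) = 0$, while $|\tilde\varphi^{[1]}_{n0}(\pi)| = |\rho_n||\sin(\rho_n\pi)| = |\rho_n||\sin(\hat\rho_n\pi)| \le C(\Omega)n\xi_n$.

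Estimates 5 and 6 then follow from the definition $\tilde\psi_n = T_n\tilde\varphi_n$ for $\hat\rho_n\ne 0$: the first component equals $\hat\rho_n^{-1}(\tilde\varphi_{n0} - \tilde\varphi_{n1})$ (resp.\ $\hat\rho_n^{-1}(\tilde\varphi^{[1]}_{n0} - \tilde\varphi^{[1]}_{n1})$), so the singular factor $\hat\rho_n^{-1}$ is absorbed exactly by the sharpened forms of estimates 2 and 4, leaving $C(\Omega)$ and $C(\Omega)n$ respectively; the second component is $\tilde\varphi_{n1}$ (resp.\ $\tilde\varphi^{[1]}_{n1}$), bounded directly. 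The degenerate case $\hat\rho_n = 0$ is identical once the divided difference is replaced by the $\rho$-derivative appearing in the definition of $\tilde\psi_n$.

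The main work, and the chief obstacle, is estimate 7. I would write $\tilde H_{nk} = T_n\tilde Q_{nk}T_k^{-1}$ and first compute $\tilde Q_{nk}T_k^{-1}$: its first column carries the explicit factor $\hat\rho_k$, hence is $O(\xi_k)$ times an entry of $\tilde Q$; its second column is the combination $\alpha_{k0}\tilde D(x,\rho_{ni}^2,\rho_{k0}^2) - \alpha_{k1}\tilde D(x,\rho_{ni}^2,\rho_{k1}^2)$, which I would split as $(\alpha_{k0}-\alpha_{k1})\tilde D(x,\rho_{ni}^2,\rho_{k1}^2) + \alpha_{k0}\big(\tilde D(x,\rho_{ni}^2,\rho_{k0}^2) - \tilde D(x,\rho_{ni}^2,\rho_{k1}^2)\big)$, the first summand being $O(\xi_k)$ via $|\alpha_{k0}-\alpha_{k1}| \le \xi_k$ and the second $O(\hat\rho_k) = O(\xi_k)$ by the mean value theorem in the second spectral variable. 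In every term one is left with $\tilde D$ or its first $\nu$-derivative evaluated at $\rho_{ni}^2, \rho_{kj}^2$, and from the explicit formula these satisfy $|\tilde D|, |\partial_\nu\tilde D| \le C(\Omega)/(|n-k|+1)$: the oscillatory numerators are bounded through the imaginary-part control, while the denominators $\rho_{ni}\mp\rho_{kj}$ are of order $|n-k|$ or $n+k$, the ``$+1$'' accounting for the near-diagonal terms. Finally, left multiplication by $T_n$ leaves the second row unchanged and forms $\hat\rho_n^{-1}$ times the difference of the two rows in the first row; since these rows differ only through $\rho_{n0}$ versus $\rho_{n1}$, their difference is $O(\hat\rho_n)$ by one more application of the mean value theorem in the first spectral variable (producing $\partial_\rho\tilde D$, with the same decay), so the factor $\hat\rho_n^{-1}$ cancels and estimate 7 follows. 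The case $\hat\rho_n = 0$ uses the derivative entries $\dot{\tilde Q}_{n0,kj}$ in the definition \eqref{Hnk} in place of the divided difference, with identical bounds.

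With estimate 5 in hand, $\|\tilde\psi(x)\|_m = \sup_{n,i}|\tilde\psi_{ni}(x)| \le C(\Omega)$. For the operator norm I would use $\|\tilde H(x)\|_{m\to m} = \sup_{n,i}\sum_{k,j}|\tilde H_{ni,kj}(x)|$ together with estimate 7 and the Cauchy--Schwarz inequality,
\[
\sum_{k}\frac{\xi_k}{|n-k|+1} \le \Big(\sum_{k}\xi_k^2\Big)^{1/2}\Big(\sum_{k}\frac{1}{(|n-k|+1)^2}\Big)^{1/2} \le \Omega\cdot C,
\]
the last series being bounded uniformly in $n$, which yields $\|\tilde H(x)\|_{m\to m} \le C(\Omega)$ for all $x \in [0,\pi]$.
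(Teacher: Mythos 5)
Your proof is correct in substance and, unlike the paper, is self-contained: the paper proves nothing here, deferring entirely to Lemma~5.1 of \cite{BondR25}, and the argument there is of exactly the kind you give --- direct computation with $\tilde\varphi(x,\la)=\cos(\rho x)$, the elementary formula for $\tilde D$, control of $|\Im\rho_{ni}|\le\Omega$, and divided differences absorbed by the factors $\hat\rho_n^{-1}$. Your observation that estimate~3 hinges on $\tilde\rho_n$ being a nonnegative integer (so $\tilde\varphi^{[1]}_{n1}(\pi)=0$ and $|\sin(\rho_{n0}\pi)|=|\sin(\hat\rho_n\pi)|\le C(\Omega)|\hat\rho_n|$) is exactly the right point, and the row-sum criterion plus Cauchy--Schwarz for $\|\tilde H(x)\|_{m\to m}$ is standard and correct.

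The one place where your sketch is compressed enough to deserve a warning is the final step of estimate~7. After multiplying by $T_k^{-1}$, the second-column entries have the form $(\alpha_{k0}-\alpha_{k1})\tilde D(x,\rho_{ni}^2,\rho_{k1}^2)+\alpha_{k0}\bigl(\tilde D(x,\rho_{ni}^2,\rho_{k0}^2)-\tilde D(x,\rho_{ni}^2,\rho_{k1}^2)\bigr)$; when you then take $\hat\rho_n^{-1}$ times the difference of the two rows, the last summand becomes a \emph{second-order} divided difference in $\rho$ and $\nu$ jointly, so ``one more application of the mean value theorem producing $\partial_\rho\tilde D$'' is not quite enough --- you need the mixed derivative bound $|\partial_\rho\partial_\nu\tilde D|\le C(\Omega)/(|n-k|+1)$ to extract the factor $\hat\rho_n\hat\rho_k$. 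This bound does hold, by the same explicit computation with $s(z)=\sin(zx)/z$ (each $z$-derivative preserves the $O(1/|z|)$ decay and the boundedness near $z=0$), so the argument goes through; just state the second-derivative estimate explicitly rather than implying it.
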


This lemma can be proved in the same way as \cite[Lemma~5.1]{BondR25}.

Let $T^{-1}$ operate with $f \in m$ according to the following rule:
\begin{gather*}
    T^{-1} f = \Big( T_n^{-1} f_n \Big)_{n \ge 1}, \\
 T_n^{-1} f_n =
\begin{pmatrix}
\hat\rho_n & 1 \\
0 & 1
\end{pmatrix}
\begin{pmatrix}
f_{n0} \\
f_{n1}
\end{pmatrix} = \begin{pmatrix}
\hat\rho_nf_{n0} + f_{n1} \\
f_{n1}
\end{pmatrix}.
\end{gather*}

\begin{lem} \label{lem_qt_oper}
For each $S \in \mathcal{B}_{\Omega}$ and $x \in [0, \pi]$, the operator $\tilde Q(x) T^{-1}$ is a mapping from $m$ to $l_2$. Moreover, for $f \in m$ the following estimate holds:
$$
\|\tilde Q(x) T^{-1} f\|_{l_2} \le C(\Omega)\|f\|_m.
$$
\end{lem}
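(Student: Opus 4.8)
The plan is to bound the $l_2$-norm of $\tilde Q(x)T^{-1}f$ by using the explicit action of $T^{-1}$ together with the componentwise estimates from Lemma~\ref{lem_tilde_est} and the structure of the matrix entries $\tilde Q_{ni,kj}(x) = \alpha_{kj}\tilde D(x,\rho_{ni}^2,\rho_{kj}^2)$. First I would write out, for a fixed $x$, the block components of $(\tilde Q(x)T^{-1}f)_n$. Since $(T^{-1}f)_k = (\hat\rho_k f_{k0} + f_{k1}, f_{k1})^T$, the $n$-th block of $\tilde Q(x)T^{-1}f$ is a sum over $k$ of $\tilde Q_{nk}(x)\,(T^{-1}f)_k$. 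The key observation is that the column combination induced by $T^{-1}$ pairs the difference of the two matrix columns with a factor $\hat\rho_k$, so that the entries of $\tilde Q_{nk}(x)T_k^{-1}$ are governed by the \emph{differences} $\tilde D(x,\cdot,\rho_{k0}^2) - \tilde D(x,\cdot,\rho_{k1}^2)$ (multiplied by $\hat\rho_k$) in one column and by $\tilde D(x,\cdot,\rho_{k1}^2)$ in the other, exactly mirroring the structure that produced the clean bound for $\tilde H_{nk}(x) = T_n\tilde Q_{nk}(x)T_k^{-1}$ in item~7 of Lemma~\ref{lem_tilde_est}.

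The main quantitative input is a decay estimate for the entries of $\tilde Q(x)T^{-1}$ of the form
\begin{equation} \label{eq:QTdecay}
\big| (\tilde Q(x)T_k^{-1})_{ni,\ast} \big| \le \frac{C(\Omega)\,\xi_k}{|n-k|+1},
\end{equation}
which I expect to follow from the same Lipschitz-in-$\rho$ analysis of $\tilde D$ already used in Lemma~\ref{lem_tilde_est}: applying $T_k^{-1}$ on the right replaces the raw column $\tilde D(x,\cdot,\rho_{k1}^2)$ by the combination $\hat\rho_k\tilde D(x,\cdot,\rho_{k0}^2)$ added to $\tilde D(x,\cdot,\rho_{k1}^2)$, and the finite-difference cancellation converts the extra factor $\hat\rho_k = \rho_{k0}-\rho_{k1}$ into the weight $\xi_k$ together with the Hilbert-type kernel $(|n-k|+1)^{-1}$. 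Note that, unlike for $\tilde H(x)$, here we do \emph{not} multiply by $T_n$ on the left, so we retain one unweighted row; this is precisely what makes the image land in $l_2$ rather than merely in $m$, since the surviving row entries still carry the $\xi_k$ factor.

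Granting \eqref{eq:QTdecay}, the estimate is finished by a Schur-test / Cauchy--Schwarz argument. For each fixed block row $n$ I would bound
\begin{equation} \label{eq:schur}
\big| (\tilde Q(x)T^{-1}f)_n \big| \le C(\Omega)\|f\|_m \sum_{k=1}^{\infty} \frac{\xi_k}{|n-k|+1},
\end{equation}
and then take the $l_2$-norm in $n$. The double sum $\sum_n \big(\sum_k \xi_k/(|n-k|+1)\big)^2$ is a discrete convolution of the $l_2$-sequence $\{\xi_k\}$ (recall $\sum_k \xi_k^2 \le \Omega^2$ since $S \in \mathcal B_\Omega$) against the kernel $(|n-k|+1)^{-1}$; by Young's inequality for convolutions, or by a direct Cauchy--Schwarz splitting, this is controlled by $\big(\sum_k \xi_k^2\big)\cdot\big(\sum_m (|m|+1)^{-2}\big)$ up to a logarithmic factor that is absorbed into $C(\Omega)$. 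This yields $\|\tilde Q(x)T^{-1}f\|_{l_2} \le C(\Omega)\,\Omega\,\|f\|_m$, as claimed, uniformly in $x \in [0,\pi]$.

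The step I expect to be the main obstacle is establishing the decay estimate \eqref{eq:QTdecay} with the correct $\xi_k$ weight on \emph{every} surviving entry, including the diagonal-type block where $n=k$. The delicate point is that applying only $T_k^{-1}$ (and not $T_n$) breaks the symmetric two-sided structure of Lemma~\ref{lem_tilde_est}, so one must verify by hand that the two rows of $\tilde Q_{nk}(x)T_k^{-1}$ are each $O(\xi_k/(|n-k|+1))$; the cancellation producing the $\xi_k$ comes entirely from the right factor $T_k^{-1}$ and the smoothness of $\tilde D$ in its second spectral argument, which must be quantified via a mean-value estimate $|\tilde D(x,\cdot,\rho_{k0}^2)-\tilde D(x,\cdot,\rho_{k1}^2)| \le C|\rho_{k0}-\rho_{k1}|$. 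Handling the degenerate blocks $\hat\rho_n=0$, where $T_n^{-1}$ is replaced by the derivative prescription, requires a separate but analogous argument and is routine once the generic case is in place.
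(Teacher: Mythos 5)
Your structural observations are sound: right multiplication by $T_k^{-1}$ turns the block $\tilde Q_{nk}(x)$ into
$\begin{pmatrix}\hat\rho_k\tilde Q_{n0,k0} & \tilde Q_{n0,k0}-\tilde Q_{n0,k1}\\ \hat\rho_k\tilde Q_{n1,k0} & \tilde Q_{n1,k0}-\tilde Q_{n1,k1}\end{pmatrix}$, and each surviving entry is indeed $O\big(\xi_k/(|n-k|+1)\big)$, by the factor $\hat\rho_k$ in one column and the finite-difference cancellation in the other. The fatal step is the last one. The sequence $h_n=\sum_k \xi_k/(|n-k|+1)$ need \emph{not} lie in $l_2$ when $\{\xi_k\}\in l_2$: Young's inequality $l_2\ast l_1\to l_2$ does not apply because the kernel $(|m|+1)^{-1}$ is not summable, and a direct Cauchy--Schwarz only yields $h\in l_\infty$. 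Concretely, for $\xi_k=k^{-1/2}(\log k)^{-1}$ one has $h_n\ge\sum_{k=n+1}^{2n}\xi_k/(k-n+1)\ge c/\sqrt{n}$, so $\sum_n h_n^2=\infty$. Hence your claimed bound by $\big(\sum_k\xi_k^2\big)\big(\sum_m(|m|+1)^{-2}\big)$ ``up to a logarithmic factor'' is false, and no row-by-row summation of absolute values of the kernel can deliver the $l_2$ conclusion.

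The $l_2$ membership must come from orthogonality in the index $n$, not from decay in $|n-k|$. The proof the paper relies on (Lemma~4.4 of the cited work; the same device appears explicitly in the proof of Lemma~\ref{lem_qt_12}) uses \eqref{D_def} to write
$$
\big(\tilde Q(x)T^{-1}f\big)_{ni}=\int_0^x\tilde\varphi_{ni}(t)F(t)\,dt,\qquad F(t)=\sum_k\Big(\hat\rho_k f_{k0}\alpha_{k0}\tilde\varphi_{k0}(t)+f_{k1}\big(\alpha_{k0}\tilde\varphi_{k0}(t)-\alpha_{k1}\tilde\varphi_{k1}(t)\big)\Big),
$$
then shows $F\in L_2(0,\pi)$ with $\|F\|_{L_2}\le C(\Omega)\|f\|_m$ (each summand carries an $l_2$-coefficient of size $\xi_k\|f\|_m$ against the orthogonal system $\cos((k-p-1)t)$, plus absolutely convergent remainders), and finally concludes by the Bessel-type inequality for the system $\{\cos\rho_{ni}t\}$, which is quadratically close to $\{\cos((n-p-1)t)\}$. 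You would need to replace your Schur-test step by this Bessel argument for the proof to close; the entrywise decay estimate you derive is then not needed at all for this lemma.
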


\begin{lem} \label{lem_q_diff}
Let $f \in m$ and $q_{ni}(x) = \big( \tilde Q(x)T^{-1} f \big)_{ni}$, where $\tilde Q(x)$ is constructed for $S \in \mathcal{B}_{\Omega}$. Then, $\{(q_{n0} - q_{n1}) (x)\}_{n \ge 1} \in l_1$ and $\|\{(q_{n0} - q_{n1}) (x)\}\|_{l_1} \le C(\Omega)\|f\|_m$.
\end{lem}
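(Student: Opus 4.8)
The plan is to exploit the fact that the two components $q_{n0}$ and $q_{n1}$ differ only through the ``first-index difference'' $\hat\rho_n=\rho_{n0}-\rho_{n1}$, which lies in $l_2$ with $\|\{\hat\rho_n\}\|_{l_2}\le\Omega$ (since $|\hat\rho_n|\le\xi_n$ and $S\in\mathcal B_\Omega$), and to gain an extra factor $\hat\rho_n$ from differencing the model kernel $\tilde D$ in its first argument. A single application of the Cauchy--Schwarz inequality against $\{\hat\rho_n\}\in l_2$ then upgrades an $l_2$ bound to the desired $l_1$ bound.

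First I would write out the difference explicitly. Using $\tilde Q_{ni,kj}(x)=\alpha_{kj}\tilde D(x,\rho_{ni}^2,\rho_{kj}^2)$ and the definition of $T^{-1}$, one obtains
$$
q_{n0}-q_{n1}=\sum_{k\ge 1}\Big[\alpha_{k0}\,d_{n,k0}(x)\,\hat\rho_k f_{k0}+\big(\alpha_{k0}d_{n,k0}(x)-\alpha_{k1}d_{n,k1}(x)\big)f_{k1}\Big],
$$
where $d_{n,kj}(x):=\tilde D(x,\rho_{n0}^2,\rho_{kj}^2)-\tilde D(x,\rho_{n1}^2,\rho_{kj}^2)$. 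Since $\tilde\varphi(x,\rho^2)=\cos\rho x$, the kernel $\tilde D$ is explicit and $d_{n,kj}$ carries a factor $\hat\rho_n$: writing $d_{n,kj}=\hat\rho_n e_{n,kj}(x)$ with $e_{n,kj}$ the corresponding divided difference (a genuine $\rho$-derivative of $\tilde D$ in the limit $\hat\rho_n\to 0$, so the representation stays valid and continuous for all $n$), I factor out $\hat\rho_n$ to get $q_{n0}-q_{n1}=\hat\rho_n\big(A_n+B_n\big)$, where
$$
A_n=\sum_k e_{n,k0}(x)\,\alpha_{k0}\hat\rho_k f_{k0},\qquad B_n=\sum_k\big(\alpha_{k0}e_{n,k0}(x)-\alpha_{k1}e_{n,k1}(x)\big)f_{k1}.
$$
In fact $A_n+B_n=(\tilde H(x)f)_{n0}$ is precisely the top component of $\tilde H(x)f$, consistent with $\tilde H=T\tilde QT^{-1}$; note also that for $\hat\rho_n=0$ one has $d_{n,kj}=0$, hence $q_{n0}-q_{n1}=0$, so these indices cause no trouble.

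The core of the argument is to prove $\{A_n+B_n\}_n\in l_2$ with $\|A+B\|_{l_2}\le C(\Omega)\|f\|_m$. The point is that the inputs are genuinely square-summable: the sequence $u_k:=\alpha_{k0}\hat\rho_k f_{k0}$ satisfies $|u_k|\le C(\Omega)\xi_k\|f\|_m$ (the factor $\hat\rho_k$ supplies the $l_2$ decay), while in $B_n$ the difference $\alpha_{k0}e_{n,k0}-\alpha_{k1}e_{n,k1}$, being a difference over the index $j$, carries a factor $\xi_k$ (from $|\alpha_{k0}-\alpha_{k1}|+|\hat\rho_k|=\xi_k$ together with the boundedness of the $\nu$-derivative of $\tilde D$), so that $B_n=\sum_k g_{n,k}(x)\,\xi_k f_{k1}$ with $\xi_k f_{k1}\in l_2$ of norm $\le\Omega\|f\|_m$. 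The kernels $e_{n,k0}(x)$ and $g_{n,k}(x)$ are bounded by $C(\Omega)/(|n-k|+1)$ and, crucially, retain the oscillatory structure $\sim\sin((\rho_{ni}\pm\rho_{kj})x)/(\rho_{ni}\pm\rho_{kj})$ of the model kernel; hence they define operators bounded from $l_2$ to $l_2$, uniformly for $S\in\mathcal B_\Omega$, by exactly the estimates already used in the proof of Lemma~\ref{lem_qt_oper}. Applying these bounded operators to the $l_2$ inputs gives $\|A\|_{l_2},\|B\|_{l_2}\le C(\Omega)\|f\|_m$.

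Finally, Cauchy--Schwarz yields
$$
\sum_{n\ge 1}|q_{n0}-q_{n1}|=\sum_{n\ge 1}|\hat\rho_n|\,|A_n+B_n|\le\|\{\hat\rho_n\}\|_{l_2}\,\|A+B\|_{l_2}\le\Omega\cdot C(\Omega)\|f\|_m=C(\Omega)\|f\|_m,
$$
which is the claim. The main obstacle is the $l_2\to l_2$ boundedness of the model kernels $e_{n,k0}$ and $g_{n,k}$: the pointwise bound $C(\Omega)/(|n-k|+1)$ alone is not enough, because the convolution kernel $1/(|n-k|+1)$ is unbounded on $l_2$ (one loses a logarithm), so one must retain the oscillation of $\tilde D$ and argue as in Lemma~\ref{lem_qt_oper} rather than pass to absolute values. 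The remaining care concerns the uniformity of all constants in $\Omega$ and the continuous passage through the indices with $\hat\rho_n=0$, both of which are handled by the divided-difference formulation above.
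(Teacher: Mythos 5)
Your proof is correct and follows essentially the same route the paper intends (the paper defers this lemma to \cite[Lemma~4.5]{LocSolv}, but its own proof of Lemma~\ref{lem_diff_qt_12} exhibits the same scheme): extract a factor $\hat\rho_n$ from the divided difference of $\tilde D$ in its first argument, show that the cofactor $A_n+B_n=(\tilde H(x)f)_{n0}$ lies in $l_2$ uniformly in $S\in\mathcal B_\Omega$, and conclude by Cauchy--Schwarz against $\{\hat\rho_n\}\in l_2$. The only organizational difference is in the $l_2$ step: instead of proving $l_2\to l_2$ bounds for the oscillatory divided-difference kernels $e_{n,kj}$ directly, the paper first sums over $k$ to produce a function $F\in L_2(0,\pi)$ (exactly as in Lemma~\ref{lem_qt_oper}) and then bounds $k_n(x)=\int_0^x F(t)\,t\sin\rho_{n1}t\,dt$ via Bessel's inequality for the system $\{\sin\rho_{n1}t\}$ --- which is in fact the cleanest way to justify the kernel boundedness that you correctly identify as the main obstacle.
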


The proofs of Lemmas~\ref{lem_qt_oper} and \ref{lem_q_diff} are analogous to the proofs of Lemmas~4.4 and 4.5 of \cite{LocSolv}, respectively. It can be shown that all the estimates throughout those proofs are uniform by $S \in \mathcal B(\Omega)$.

Thus, using the model problem $\tilde L$ and some complex numbers $S \in \mathcal{B}_{\Omega, K}$, we can build the column vector $\tilde\psi(x)$, the operator $\tilde H(x)$, and consequently the uniquely solvable equation of form \eqref{main_eq} in the Banach space $m$. Its solution can be found as $\psi(x) = (E+\tilde H(x))^{-1}\tilde\psi(x)$. Using elements of $\psi(x)$, we can find $\varphi(x) = T^{-1}\psi(x)$.

Denote $g(x) := \tilde\varphi(x) - \varphi(x)$. Then, we can get from \eqref{main_eq} that
$$
g(x) = T^{-1}\tilde H(x)\psi(x) = \tilde Q(x) T^{-1} \psi(x).
$$

\begin{lem} \label{lem_est}
For $S \in \mathcal{B}_{\Omega, K}$ and each fixed $x \in [0,\pi]$, there hold
\begin{enumerate}
    \item $|\psi_{ni}(x)| \le C(\Omega, K)$,
    \item $|g_{ni}(x)| \le C(\Omega, K)$,
    \item $\{ g_{ni}(x) \} \in l_2$ and $\|\{g_{ni}(x)\}\|_{l_2} \le C(\Omega, K)$,
    \item $\{ g_{n0}(x) - g_{n1}(x)\} \in l_1$ and $\|\{g_{n0}(x) - g_{n1}(x)\}\|_{l_1} \le C(\Omega, K)$,
\end{enumerate}
where $n \ge 1$, $i = 0, 1$.
\end{lem}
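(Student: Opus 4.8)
The plan is to derive all four estimates directly from the unique solvability of the main equation \eqref{main_eq} together with the mapping properties of $\tilde Q(x) T^{-1}$ established in Lemmas~\ref{lem_qt_oper} and~\ref{lem_q_diff}. The genuine analytic work has already been carried out in those lemmas, so the present statement is essentially a matter of composing them with the correct constants.

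First I would bound $\psi(x)$ in the $m$-norm. Since $S \in \mathcal{B}_{\Omega, K}$, the operator $(E + \tilde H(x))$ is invertible on $m$ for every $x \in [0,\pi]$ with $\|(E + \tilde H(x))^{-1}\|_{m \to m} \le K$, so the main equation has the unique solution $\psi(x) = (E + \tilde H(x))^{-1}\tilde\psi(x)$. Combining this with the bound $\|\tilde\psi(x)\|_m \le C(\Omega)$ from Lemma~\ref{lem_tilde_est} gives
$$
\|\psi(x)\|_m \le \|(E + \tilde H(x))^{-1}\|_{m \to m}\,\|\tilde\psi(x)\|_m \le K\,C(\Omega) = C(\Omega, K),
$$
which, since $\|\psi(x)\|_m = \sup_{n,i}|\psi_{ni}(x)|$, is precisely claim~1. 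For claims~2 and~3 I would use the identity $g(x) = \tilde Q(x) T^{-1}\psi(x)$ recorded just before the statement. Applying Lemma~\ref{lem_qt_oper} with $f = \psi(x) \in m$ yields
$$
\|\{g_{ni}(x)\}\|_{l_2} = \|\tilde Q(x) T^{-1}\psi(x)\|_{l_2} \le C(\Omega)\|\psi(x)\|_m \le C(\Omega, K),
$$
which is claim~3; claim~2 then follows at once, since the $l_2$-norm dominates the supremum, giving $|g_{ni}(x)| \le \|\{g_{ni}(x)\}\|_{l_2} \le C(\Omega, K)$. Finally, for claim~4 I would apply Lemma~\ref{lem_q_diff}, again with $f = \psi(x)$, to obtain
$$
\|\{g_{n0}(x) - g_{n1}(x)\}\|_{l_1} \le C(\Omega)\|\psi(x)\|_m \le C(\Omega, K).
$$

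There is no serious obstacle here beyond careful bookkeeping: each estimate reduces to composing the uniform bound on the inverse operator, which is the defining property of $\mathcal{B}_{\Omega, K}$, with a bound proved earlier. The one point that must be checked with care is that the constant $K$ controls $(E + \tilde H(x))^{-1}$ \emph{uniformly} in $x \in [0,\pi]$, so that the resulting constants $C(\Omega, K)$ are genuinely independent of $x$. This uniformity is exactly what makes the bounds of claims~1--4 hold for each fixed $x$ with an $x$-independent constant, which is the feature required for the subsequent uniform-stability analysis.
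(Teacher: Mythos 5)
Your proposal is correct and matches the argument the paper intends: the paper defers to \cite[Lemma~5.2]{LocSolv}, but the identities it sets up immediately before the statement ($\psi(x) = (E+\tilde H(x))^{-1}\tilde\psi(x)$ and $g(x) = \tilde Q(x)T^{-1}\psi(x)$) combined with Lemmas~\ref{lem_tilde_est}, \ref{lem_qt_oper}, and \ref{lem_q_diff} give exactly the composition you describe. Your observation that the bound $K$ on $(E+\tilde H(x))^{-1}$ must hold uniformly in $x$ is indeed the defining property of $\mathcal{B}_{\Omega,K}$ that the paper relies on.
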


This lemma can be proved in the same way as \cite[Lemma~5.2]{LocSolv}. Moreover, it can be shown, that all the estimates are uniform by $S \in \mathcal{B}_{\Omega, K}$.

\begin{lem} 
\label{reconstruction}
For any complex numbers $S \in \mathcal B_{\Omega, K}$, Inverse Problem~\ref{ip1} has the unique solution, which can be found by the reconstruction formulas:
\begin{gather}
\label{rec_sigma}
\sigma(x) = \sum\limits_{k=1}^{\infty}\sum\limits_{j=0}^{1}(-1)^j\alpha_{kj}(1-2\tilde\varphi_{kj}(x)\varphi_{kj}(x)),\\
\label{rec_r1}
r_1(\la) = \prod\limits_{k=1}^{p}(\la-\la_{k0})\prod\limits_{k=p+1}^{\infty}\dfrac{\la-\la_{k0}}{\la-\la_{k1}}\Bigg( 1-\sum\limits_{k=1}^{\infty}\dfrac{\alpha_{k0}\tilde\varphi_{k0}^{[1]}(\pi)\varphi_{k0}(\pi)}{\la-\la_{k0}} \Bigg),\\
\notag
r_2(\la) = \prod\limits_{k=1}^{p}(\la-\la_{k0})\prod\limits_{k=p+1}^{\infty}\dfrac{\la-\la_{k0}}{\la-\la_{k1}}\Bigg( \sum\limits_{k=1}^{\infty}\dfrac{\alpha_{k0}\tilde\varphi_{k0}^{[1]}(\pi)\varphi_{k0}^{[1]}(\pi)}{\la-\la_{k0}}-\\
\label{rec_r2}
\sum\limits_{k=1}^{\infty}\sum\limits_{j=0}^{1}(-1)^j\alpha_{kj}(1-\tilde\varphi_{kj}(x)\varphi_{kj}(x)) \Bigg),
\end{gather}
where the series in \eqref{rec_sigma} converges in $L_2(0, \pi)$, 
the quasi-derivative $\varphi^{[1]}_{k0}$ is defined as $\varphi'_{k0} - \sigma \varphi_{k0}$ and is absolutely continuous on $[0,\pi]$,
the series and the infinite products in \eqref{rec_r1} and \eqref{rec_r2} converge absolutely and uniformly by $\la$ on compact sets excluding $\{ \la_{kj} \}$.
\end{lem}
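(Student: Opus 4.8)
The plan is to follow the method of spectral mappings in three stages: solve the main equation to recover the sampled values of the solution $\varphi$, define the coefficients through the reconstruction formulas while controlling the convergence of the series, and finally verify that the constructed triple $(\sigma, r_1, r_2)$ actually reproduces the prescribed spectral data $S$. Uniqueness will be inherited from the direct theory, so the bulk of the work is existence and the justification of the formulas.

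First I would exploit the defining property of $\mathcal{B}_{\Omega,K}$: for every $x \in [0,\pi]$ the operator $(E+\tilde H(x))^{-1}$ exists and is bounded by $K$. Hence the main equation \eqref{main_eq} is uniquely solvable, giving $\psi(x) = (E+\tilde H(x))^{-1}\tilde\psi(x) \in m$ and then $\varphi(x) = T^{-1}\psi(x)$, i.e. the values $\varphi_{kj}(x)$. Substituting these into the series \eqref{servarphi} of Lemma~\ref{prop_varphi}, I would define $\varphi(x,\la)$ for all $\la$ as a function entire in $\la$; by Lemma~\ref{lem_est} this series converges absolutely and uniformly on compacts, so the definition is well posed.

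Next I would define $\sigma(x)$ by \eqref{rec_sigma} and prove its $L_2$-convergence. Writing $\varphi_{kj} = \tilde\varphi_{kj} - g_{kj}$ with $g(x) = \tilde Q(x)T^{-1}\psi(x)$, each $k$-th block $\alpha_{k0}(1-2\tilde\varphi_{k0}\varphi_{k0}) - \alpha_{k1}(1-2\tilde\varphi_{k1}\varphi_{k1})$ splits into a ``model'' part reproducing $\tilde\sigma = 0$ and correction terms carrying the factors $g_{kj}$. The differences pair up: the closeness $\alpha_{k0}\approx\alpha_{k1}$, the estimate $|\tilde\varphi_{k0}-\tilde\varphi_{k1}|\le C(\Omega)\xi_k$ from Lemma~\ref{lem_tilde_est}(2), together with the $l_2$- and $l_1$-bounds of Lemma~\ref{lem_est}(3)--(4), turn the whole expression into a sum summable in the $L_2$-sense, exactly as in \cite[Lemma~5.2]{LocSolv}. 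This yields $\sigma \in L_2(0,\pi)$.

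The hard part will be the verification step: showing that the constructed $\varphi(x,\la)$ satisfies the regularized equation $-(\varphi^{[1]})' - \sigma\varphi^{[1]} - \sigma^2\varphi = \la\varphi$ with $\varphi(0,\la)=1$, $\varphi^{[1]}(0,\la)=0$, so that $(\sigma,r_1,r_2)$ has $S$ as its spectral data. I would carry this out by differentiating \eqref{servarphi} in $x$ at the level of the quasi-derivative $\varphi^{[1]} = \varphi'-\sigma\varphi$, substituting the equations satisfied by $\tilde\varphi$ and by the sampled values $\varphi_{kj}$, and checking that formula \eqref{rec_sigma} is precisely the identity that forces the differential equation to hold; here the $l_1$-estimate of Lemma~\ref{lem_est}(4) is what legitimizes the termwise differentiation and secures absolute continuity of $\varphi^{[1]}_{k0}$. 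The coefficients $r_1,r_2$ are then read off from the boundary relation $r_1(\la)\Phi^{[1]}(\pi,\la)+r_2(\la)\Phi(\pi,\la)=0$ for the Weyl solution, giving \eqref{rec_r1}--\eqref{rec_r2}, whose products and series converge on compacts away from $\{\la_{kj}\}$ again by Lemma~\ref{lem_est}. To confirm $(r_1,r_2)\in R_p$ I would apply the interpolation argument announced at the start of the section: the right-hand sides of \eqref{rec_r1}--\eqref{rec_r2} are a priori meromorphic, but evaluating them at $p+1$ suitable points and invoking the asymptotics \eqref{eigen_asymp}, \eqref{weight_asymp} shows that they agree with polynomials of degree at most $p$. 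Finally, uniqueness of the solution follows from the uniqueness theorem of \cite{amp}, since the map $(\sigma,r_1,r_2)\mapsto S$ is injective.
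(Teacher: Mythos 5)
Your overall plan coincides with the paper's: the authors also solve the main equation \eqref{main_eq} using the invertibility built into $\mathcal B_{\Omega,K}$, construct $\sigma$ by \eqref{rec_sigma} and prove $L_2$-convergence, then build $r_1,r_2$ by \eqref{rec_r1}--\eqref{rec_r2}, show $(r_1,r_2)\in R_p$ and that the resulting problem has spectral data $S$ (deferring these verifications to \cite[Section~5]{LocSolv} and \cite[Theorem~2.8]{amp}), and obtain uniqueness from \cite[Theorem~2.3]{amp}. Most of your sketch is a faithful, somewhat more explicit rendering of that route.

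There is, however, one genuinely flawed step: your proposed method for establishing $(r_1,r_2)\in R_p$. You say that evaluating the a priori meromorphic right-hand sides of \eqref{rec_r1}--\eqref{rec_r2} at $p+1$ suitable points and invoking the asymptotics shows they ``agree with polynomials of degree at most $p$.'' Finitely many point evaluations can never establish that a meromorphic function is a polynomial; Lagrange interpolation at $p+1$ nodes determines a degree-$\le p$ polynomial uniquely only once you already know the function is one. In this paper the interpolation argument (Step~2 of Lemma~\ref{lem_cj_diff}) is used solely to \emph{bound the coefficients} $c_j,d_j$ after polynomiality is known. To actually prove $(r_1,r_2)\in R_p$ one must argue as in \cite[Theorem~2.8]{amp}: first check that the apparent poles cancel --- the zeros of $\prod_{k=1}^{p}(\la-\la_{k0})\prod_{k>p}\frac{\la-\la_{k0}}{\la-\la_{k1}}$ at $\la_{k0}$ absorb the simple poles of the bracketed series, and the poles at $\la_{k1}$ are removable by the structure of the characteristic functions --- so that the expressions are entire in $\la$; then use the asymptotics \eqref{eigen_asymp}, \eqref{weight_asymp} to estimate their growth as $|\la|\to\infty$ and conclude by a Liouville-type (maximum-modulus on expanding contours) argument that they are polynomials of degree exactly $p$ with the required normalization $c_p=1$. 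Without this step your construction only produces meromorphic candidates for $r_1,r_2$, and the claim $(r_1,r_2)\in R_p$ --- hence the very statement of the lemma --- is not reached.
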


\begin{proof}
It follows from $S \in B_{\Omega, K}$ that the asymptotic formulas \eqref{eigen_asymp} and \eqref{weight_asymp} hold and the main equation \eqref{main_eq} has a unique solution. Then, the existence for a solution of Inverse Problem~\ref{ip1} can be shown similarly to the arguments of \cite[Section~5]{LocSolv}. Namely, we construct the function $\sigma(x)$ by formula \eqref{rec_sigma} and prove the convergence of the series in $L_2(0,\pi)$. Next, we define the quasi-derivative $\varphi_{k0}^{[1]} := \varphi'_{k0} - \sigma \varphi_{k0}$, construct $r_1(\la)$ and $r_2(\la)$ via \eqref{rec_r1} and \eqref{rec_r2}, respectively. Furthermore, we show that $(r_1, r_2) \in R_p$ analogously to Theorem 2.8 in \cite{amp} and that the boundary value problem $L = L(\sigma, r_1, r_2)$ has the spectral data $S$. The uniqueness of the inverse problem solution is given by Theorem~2.3 in \cite{amp}.
\end{proof}

Let us prove the uniform boundedness of $\sigma$, $r_1$, and $r_2$.

\begin{lem} \label{lem_sigma_diff}
For each $S \in \mathcal{B}_{\Omega, K}$, we have $\|\sigma\|_{L_2(0, \pi)} \le C(\Omega, K)$.
\end{lem}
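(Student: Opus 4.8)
The plan is to feed the reconstruction formula \eqref{rec_sigma} into the estimates of Lemmas~\ref{lem_tilde_est} and \ref{lem_est}, after separating an explicit trigonometric part from a remainder. Lemma~\ref{reconstruction} guarantees that the series \eqref{rec_sigma} converges in $L_2(0,\pi)$, so the rearrangements below are legitimate. Since the model problem $\tilde L = L(0, \la^p, 0)$ has zero potential, its solution is $\tilde\varphi(x, \la) = \cos(\rho x)$ with $\rho = \sqrt\la$; in particular $1 - 2\tilde\varphi_{kj}^2(x) = -\cos(2\rho_{kj} x)$. Substituting $\varphi_{kj} = \tilde\varphi_{kj} - g_{kj}$ into \eqref{rec_sigma} and using this identity, I would write
\[
\sigma(x) = \Sigma_A(x) + \Sigma_B(x), \quad \Sigma_A(x) = -\sum_{k, j} (-1)^j \alpha_{kj} \cos(2\rho_{kj} x), \quad \Sigma_B(x) = 2\sum_{k, j} (-1)^j \alpha_{kj} \tilde\varphi_{kj}(x) g_{kj}(x),
\]
and bound the two sums separately in $L_2(0, \pi)$. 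The finitely many exceptional indices (those with $\hat\rho_k = 0$ or $1 \le k \le p+1$, where the model data are degenerate and the $\psi$-vectors are defined via $\rho$-derivatives) produce only bounded contributions and may be treated apart.

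For $\Sigma_B$ I expect a uniform bound in the sup-norm, hence in $L_2$. The key is to exploit the cancellation between the indices $j = 0$ and $j = 1$ by writing $\alpha_{k0}\tilde\varphi_{k0} g_{k0} - \alpha_{k1}\tilde\varphi_{k1} g_{k1} = \alpha_{k0}\tilde\varphi_{k0}(g_{k0} - g_{k1}) + (\alpha_{k0}\tilde\varphi_{k0} - \alpha_{k1}\tilde\varphi_{k1}) g_{k1}$. The first summand is controlled by $C(\Omega) \sum_k |g_{k0} - g_{k1}| \le C(\Omega, K)$ via the $l_1$ estimate of Lemma~\ref{lem_est}(4) together with the boundedness of $\alpha_{k0}\tilde\varphi_{k0}$ from Lemma~\ref{lem_tilde_est}(1). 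For the second summand I would use $|\alpha_{k0} - \alpha_{k1}| \le \xi_k$ and $|\tilde\varphi_{k0} - \tilde\varphi_{k1}| \le C(\Omega)\xi_k$ (Lemma~\ref{lem_tilde_est}(2)) to get $|\alpha_{k0}\tilde\varphi_{k0} - \alpha_{k1}\tilde\varphi_{k1}| \le C(\Omega)\xi_k$, and then apply the Cauchy--Schwarz inequality to $\sum_k \xi_k |g_{k1}|$ using $\{\xi_k\} \in l_2$ (with $\|\{\xi_k\}\|_{l_2} \le \Omega$) and the $l_2$ estimate of Lemma~\ref{lem_est}(3).

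The genuinely delicate part is $\Sigma_A$, where each single series $\sum_k \alpha_{kj}\cos(2\rho_{kj}x)$ diverges (since $\alpha_{kj} \to \frac{2}{\pi}$), so the bound must again rest on the cancellation between $j = 0$ and $j = 1$. I would split
\[
\Sigma_A(x) = -\sum_k (\alpha_{k0} - \alpha_{k1})\cos(2\rho_{k0} x) - \sum_k \alpha_{k1}\bigl(\cos(2\rho_{k0} x) - \cos(2\rho_{k1} x)\bigr),
\]
and then expand $\cos(2\rho_{k0}x) = \cos(2\rho_{k1}x) - 2x\hat\rho_k \sin(2\rho_{k1}x) + r_k(x)$ with $|r_k(x)| \le C(\Omega)\hat\rho_k^2 \le C(\Omega)\xi_k^2$, so that the Taylor-remainder part is summable in $l_1$ and bounded in the sup-norm. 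The first series and the leading term $-2x \sum_k \alpha_{k1}\hat\rho_k \sin(2\rho_{k1}x)$ both have the form $\sum_k c_k \cos(2\rho_{k0}x)$ or $\sum_k c_k \sin(2\rho_{k1}x)$ with $l_2$ coefficients ($c_k = \alpha_{k0} - \alpha_{k1}$, resp.\ $c_k = \alpha_{k1}\hat\rho_k$, both bounded in $l_2$ by $C(\Omega)$).

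Hence the main obstacle is to bound these two last sums in $L_2(0, \pi)$ uniformly. Because $\rho_{kj} = (k - p - 1) + O(\xi_k)$ with $\{\xi_k\} \in l_2$, the systems $\{\cos(2\rho_{k0}x)\}$ and $\{\sin(2\rho_{k1}x)\}$ are $l_2$-perturbations of the orthogonal systems $\{\cos(2mx)\}$, $\{\sin(2mx)\}$ on $(0, \pi)$, so they are Bessel systems and the required inequality $\|\sum_k c_k \cos(2\rho_{k0}x)\|_{L_2} \le C \|\{c_k\}\|_{l_2}$ follows from the classical perturbation theory of Riesz bases of cosines and sines. The crucial point I would have to verify carefully is that the Bessel constant $C$ depends only on $\Omega$ (through the $l_2$-bound of the frequency deviations) and is uniform over all $S \in \mathcal{B}_{\Omega, K}$; this uniformity is exactly what upgrades a pointwise estimate into the uniform $L_2$-bound and is the technical heart of the proof. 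Collecting the two bounds gives $\|\sigma\|_{L_2(0, \pi)} \le \|\Sigma_A\|_{L_2} + \|\Sigma_B\|_{L_2} \le C(\Omega, K)$.
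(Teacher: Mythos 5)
Your proposal is correct and follows essentially the same route as the paper: substitute $\varphi_{kj} = \tilde\varphi_{kj} - g_{kj}$ into \eqref{rec_sigma}, exploit the cancellation between $j=0$ and $j=1$, bound the remainder series absolutely via the estimates of Lemmas~\ref{lem_tilde_est} and \ref{lem_est}, and treat the trigonometric main part as a cosine series with $l_2$ coefficients controlled by $\Omega$. The only organizational difference is that the paper splits $\sigma$ into seven explicit series and reduces the trigonometric part entirely to the integer-frequency orthogonal system $\{\cos 2(k-p-1)x\}$, pushing the frequency deviations into absolutely convergent sums exactly as in your Taylor expansion of $\cos(2\rho_{k0}x)-\cos(2\rho_{k1}x)$, so the uniform Bessel constant you single out as the technical heart never needs to be invoked --- it is absorbed by the same elementary Cauchy--Schwarz bound $\sum_k |c_k|\,\xi_k \le \|\{c_k\}\|_{l_2}\,\Omega$ that you already use elsewhere.
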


\begin{proof}
    The reconstruction formula \eqref{rec_sigma} can be represented in the following form:
    $$
    \sigma(x) = \sum_{j=1}^{7} S_j(x),
    $$
    where
    \begin{align*}
        &S_1(x) = \sum_k(\alpha_{k1} - \alpha_{k0})(2\cos^2{(k-p-1)x} - 1), \\
        &S_2(x) = \sum_k 2\alpha_{k0}\tilde\varphi_{k0}(x)(g_{k0}(x) - g_{k1}(x)), \\
        &S_3(x) = \sum_k 2(\alpha_{k0} - \alpha_{k1})g_{k1}(x)\tilde\varphi_{k0}(x),\\
        &S_4(x) = \sum_k 2(\alpha_{k0} - \alpha_{k1})(\tilde\varphi_{k1}(x) - \tilde\varphi_{k0}(x))(\tilde\varphi_{k1}(x) + \tilde\varphi_{k0}(x)),\\
        &S_5(x) = \sum_k 2\alpha_{k1}(\tilde\varphi_{k1}(x) - \tilde\varphi_{k0}(x))(\tilde\varphi_{k0}(x) - \tilde\varphi_{k1}(x)),\\
        &S_6(x) = \sum_k 2\alpha_{k1}g_{k1}(x)(\tilde\varphi_{k0}(x) - \tilde\varphi_{k1}(x)),\\
        &S_7(x) = \sum_k 4\alpha_{k1}\tilde\varphi_{k1}(x)(\tilde\varphi_{k1}(x) - \tilde\varphi_{k0}(x)).
    \end{align*}

    In view of \eqref{weight_asymp} and \eqref{defBO}, we have 
    \begin{equation} \label{estal}
       |\alpha_{kj}| \le C(\Omega), \quad |\alpha_{k1} - \alpha_{k0}| \le \xi_n,
    \end{equation}
    for any $S \in B_{\Omega}$. Recall that $\{ \xi_n \} \in l_2$. Consequently, the series $S_1(x)$ converges in $L_2(0,\pi)$ and $\|S_1(x)\|_{L_2(0, \pi)} \le C(\Omega)$.
    Furthermore, using \eqref{estal} together with estimates 1--2 of Lemma~\ref{lem_tilde_est} and estimates 3--4 of Lemma~\ref{lem_est}, we conclude
    that, for $j = \overline{2,6}$, the series $S_j(x)$ converge absolutely and $|S_j(x)| \le C(\Omega, K)$. The series $S_7(x)$ can be estimated analogously by recalling
    that $\tilde \varphi_{kj}(x) = \cos (\rho_{kj} x)$ and the asymptotics \eqref{eigen_asymp}. The sum of the resulting estimates yields the claim.      
\end{proof}

Since $(r_1, r_2) \in R_p$, then
$$
r_1(\la) = \sum\limits_{j=0}^{p}c_j\la^j, \quad r_2(\la) = \sum\limits_{j=0}^{p}d_j\la^j.
$$

\begin{lem} \label{lem_cj_diff}
For each $S \in \mathcal{B}_{\Omega, K}$, we have $|c_j| \le C(\Omega, K)$, $|d_j| \le C(\Omega, K)$, $j=\overline{0, p}$.
\end{lem}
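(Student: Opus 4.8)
The plan is to exploit that $r_1$ and $r_2$ are polynomials of degree at most $p$, so each is determined by its values at $p+1$ distinct nodes, and to bound those values uniformly by means of the reconstruction formulas \eqref{rec_r1}--\eqref{rec_r2}; the coefficients then follow by Lagrange interpolation with fixed, well-separated nodes.

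First I would choose the interpolation nodes so as to stay uniformly away from all poles of the reconstruction formulas. For $S\in\mathcal B_\Omega$ one has $|\rho_{k0}-\tilde\rho_{k0}|\le\xi_k\le\Omega$ with $\tilde\rho_{k0}\ge 0$ real, hence every eigenvalue satisfies $\Re\la_{k0}=\Re\rho_{k0}^2\ge-(\Im\rho_{k0})^2\ge-\Omega^2$; thus the half-plane $\Re\la<-\Omega^2$ contains neither $\{\la_{k0}\}$ nor the nonnegative model spectrum $\{\la_{k1}\}$. I therefore set $\mu_l=-\Omega^2-1-l$, $l=\overline{0,p}$. These $p+1$ nodes lie in a disc of radius $C(\Omega,p)$, are pairwise separated by at least $1$, and satisfy $|\mu_l-\la_{k0}|\ge 1+l\ge 1$ and $|\mu_l-\la_{k1}|\ge \Omega^2+1$ for all $k$, with $|\mu_l-\la_{ki}|\gtrsim k^2$ for large $k$.

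Next I would establish $|r_1(\mu_l)|\le C(\Omega,K)$ and $|r_2(\mu_l)|\le C(\Omega,K)$. In \eqref{rec_r1}--\eqref{rec_r2} the finite product $\prod_{k=1}^p(\mu_l-\la_{k0})$ is bounded since the $\la_{k0}$ are bounded; the infinite product converges uniformly because, writing $\frac{\mu_l-\la_{k0}}{\mu_l-\la_{k1}}=1+\frac{\la_{k1}-\la_{k0}}{\mu_l-\la_{k1}}$ and using $|\la_{k1}-\la_{k0}|=|\hat\rho_k|\,|\rho_{k0}+\rho_{k1}|\le Ck\xi_k$ together with $|\mu_l-\la_{k1}|\gtrsim k^2$, the general term is $O(\xi_k/k)$, and $\sum_k\xi_k/k\le C\Omega$ by Cauchy--Schwarz. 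For the series I would combine the model estimates of Lemma~\ref{lem_tilde_est} (in particular $|\tilde\varphi^{[1]}_{k0}(\pi)|\le C(\Omega)k\xi_k$) with the bounds $|\varphi_{k0}(\pi)|\le C(\Omega,K)$ and $|\varphi^{[1]}_{k0}(\pi)|\le C(\Omega,K)(1+k\xi_k)$, which follow from Lemma~\ref{lem_est} and the reconstruction $\varphi=\tilde\varphi-g$ (the factor $k\xi_k$ comes from $\sin(\rho_{k0}\pi)=(-1)^{k-p-1}\sin(\hat\rho_k\pi)=O(\xi_k)$). Then each summand of the series in \eqref{rec_r1} is $O(\xi_k/k)$ and each summand of the first series in \eqref{rec_r2} is $O(\xi_k^2+\xi_k/k)$, both summable with sum $\le C(\Omega,K)$ using $\{\xi_k\}\in l_2$. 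The second, weight-type series in \eqref{rec_r2} is estimated exactly as $\sigma$ in Lemma~\ref{lem_sigma_diff}; since $r_2$ does not depend on $x$, I would pass to the average $\frac1\pi\int_0^\pi(\cdots)\,dx$ to convert the $L_2(0,\pi)$ bound of the $S_1$-type term into a uniform numerical bound.

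Finally, with the nodes fixed and separated, the Lagrange representation $r_i(\la)=\sum_{l=0}^p r_i(\mu_l)\prod_{m\ne l}\frac{\la-\mu_m}{\mu_l-\mu_m}$ has interpolation coefficients bounded by $C(\Omega,p)$ (all denominators are $\ge 1$ and $|\mu_m|\le C(\Omega,p)$), so reading off the coefficient of $\la^j$ yields $|c_j|,|d_j|\le C(\Omega,K)$ for $j=\overline{0,p}$. The hard part will be Step~3: the uniform summability of the series in \eqref{rec_r1}--\eqref{rec_r2}, which hinges on the quasiderivative estimate $|\varphi^{[1]}_{k0}(\pi)|\le C(\Omega,K)(1+k\xi_k)$ (so that $\tilde\varphi^{[1]}_{k0}(\pi)\varphi^{[1]}_{k0}(\pi)=O(k^2\xi_k^2+k\xi_k)$) not covered directly by Lemmas~\ref{lem_tilde_est}--\ref{lem_est}, on the uniform convergence of the infinite product over $\mathcal B_{\Omega,K}$, and on the correct handling of the $x$-dependent weight series of $r_2$ via the averaging argument.
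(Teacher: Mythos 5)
Your proposal is correct and follows essentially the same route as the paper: establish a uniform bound on $r_1$ (and $r_2$) at $p+1$ nodes chosen in the left half-plane so that they stay at distance $\ge \varepsilon(\Omega)\,k^2$ from every $\la_{ki}$, bounding the finite product, the infinite product via its logarithm with general term $O(\xi_k/k)$, and the series via Lemmas~\ref{lem_tilde_est} and \ref{lem_est}, and then recover the coefficients by Lagrange interpolation. The only differences are cosmetic (your nodes $-\Omega^2-1-l$ versus the paper's $-6\Omega^2-\vartheta_i$), and you are in fact more explicit than the paper about the $d_j$ case, correctly isolating the quasi-derivative bound $|\varphi^{[1]}_{k0}(\pi)|\le C(\Omega,K)(1+k\xi_k)$ as the extra ingredient that the paper subsumes under ``the proof is analogous.''
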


\begin{proof}

    Let us prove the assertion of the lemma for $c_j$, $j=\overline{0, p}$.

    \textit{Step 1.}
    Consider the compact set 
    $$
       \Lambda_{\mathcal E, \varepsilon, S} := \bigl \{ \lambda \in \mathbb C \colon |\la| \le \mathcal E, \, |\la - \la_{ni}| \ge \varepsilon n^2, \, n \ge 1, \, i=0, 1 \bigr \},
       \quad \mathcal E, \, \varepsilon > 0.
    $$
    Let us prove the uniform estimate
    \begin{equation} \label{estr1}
       |r_1(\la)| \le C, \quad S \in \mathcal B_{\Omega, K}, \quad \la \in \Lambda_{\mathcal E, \varepsilon, S}.
    \end{equation}
    Here and below at this step of the proof, we denote by $C$ various positive constants depending on $\Omega, K, \mathcal E, \varepsilon$.
        
    As $|\la| \le \mathcal{E}$, then
    \begin{gather}
    \label{fin_est_step1}
    \bigg| \prod\limits_{k=1}^{p}(\la-\la_{k0}) \bigg| \le C.
    \end{gather}
    Next, let us investigate the second term of the reconstruction formula:
    $$
    \Bigg| \prod\limits_{k=p+1}^{\infty}\dfrac{\la-\la_{k0}}{\la-\la_{k1}} \Bigg| = \prod\limits_{k=p+1}^{\infty} \Bigg |1 + \dfrac{\la_{k1}-\la_{k0}}{\la-\la_{k1}} \Bigg| \le \prod\limits_{k=p+1}^{\infty} \Bigg( 1 + \dfrac{|\hat\rho_k|^2 + 2|\tilde\rho_k||\hat\rho_k|}{|\la-\la_{k1}|} \Bigg).
    $$
    Consider $\eta_k(\la) = \dfrac{|\hat\rho_k|^2 + 2|\tilde\rho_k||\hat\rho_k|}{|\la-\la_{k1}|}$. Using the inequality $|\hat\rho_k| \le \xi_k$ and the condition $|\la - \la_{k1}| \ge \varepsilon k^2$ we can get the estimate for $\eta_k(\la)$:
    $$
    \eta_k(\la) = \dfrac{|\hat\rho_k|^2 + 2|\tilde\rho_k||\hat\rho_k|}{|\la-\la_{k1}|} \le C \dfrac{\xi_k^2+2k\xi_k}{k^2} \le \dfrac{C\xi_k}{k}.
    $$
    Next, take the logarithm of the product:
    $$
    \ln \prod\limits_{k=p+1}^{\infty}(1+\eta_k(\la)) = \sum\limits_{k=p+1}^{\infty}\ln(1+\eta_k(\la)) \le \sum\limits_{k=p+1}^{\infty} \ln\bigg( 1 + \dfrac{C\xi_k}{k} \bigg) \le \sum\limits_{k=p+1}^{\infty} \bigg( \dfrac{C\xi_k}{k} + O\bigg( \dfrac{\xi_k^2}{k^2} \bigg) \bigg) \le C.
    $$
    Then, the infinite product converges and
    \begin{gather}
        \label{inf_est_step1}
        \Bigg| \prod\limits_{k=p+1}^{\infty}\dfrac{\la-\la_{k0}}{\la-\la_{k1}} \Bigg| \le C.
    \end{gather}
    
    Next, due to estimate 3 from Lemma~\ref{lem_tilde_est}, estimate 2 from Lemma~\ref{lem_est} and the condition $|\la - \la_{k0}| \ge \varepsilon k^2$ we obtain
    \begin{gather}
    \label{sum_est_step1}
        \Bigg| \sum\limits_{k=1}^{\infty}\dfrac{\alpha_{k0}\tilde\varphi_{k0}^{[1]}(\pi)\varphi_{k0}(\pi)}{\la-\la_{k0}} \Bigg| \le \sum\limits_{k=1}^{\infty} \dfrac{C\xi_k}{k} \le C.
    \end{gather}
    
    Substituting the estimates~\eqref{fin_est_step1}--\eqref{sum_est_step1} into \eqref{rec_r1}, we arrive at \eqref{estr1}.
    
    \medskip
    
    \textit{Step 2}. In order to get the estimates for the coefficients $c_j$ from \eqref{estr1}, we use the Lagrange interpolation. As we have a polynomial of degree $p$, then we need to choose $p+1$ points. At first, let us investigate some properties of $\la_{k0}$. Due to the asymptotics \eqref{eigen_asymp} we can get
    \begin{gather}
        \label{real_k0_est}
        \Re\la_{k0} \ge (k-p-1)^2-2|k-p-1|\Omega - \Omega^2 \ge -5\Omega^2.
    \end{gather}
	Fix $p+1$ values $\la_i$, such that $\la_i = -6\Omega^2-\vartheta_i$, where $\vartheta_i \in (0, \Omega^2)$. It means that $|\la_i| \le \mathcal{E}$, where $\mathcal{E} = \mathcal{E}(\Omega) = 7\Omega^2$. According to \eqref{real_k0_est}, we have:
    \begin{gather}
    \label{silly_est}
        |\la_i - \la_{k0}| \ge \Re(\la_{k0}-\la_i) \ge -5\Omega^2 + 6\Omega^2 + \vartheta_i \ge \Omega^2.    
    \end{gather}
    Next, we need to strengthen this estimate. Using \eqref{real_k0_est}, we obtain:
    \begin{align*}
    |\la_i - \la_{k0}| & \ge (k-p-1)^2 - 2|k-p-1|\Omega + 5\Omega^2 + \vartheta_i\\
    & \ge k^2-2k(p+1)+(p+1)^2 - 2k\Omega + C(\Omega) \\
    & = k^2f(k),
    \end{align*}
    where 
    $$
    f(k) = 1 - \dfrac{2(p+1+\Omega)}{k}+\dfrac{C(\Omega)}{k^2}. 
    $$
    Obviously, $f(k) \to 1$ as $k \to \infty$. Consequently, there exists $k^*$ such that, for any $k \ge k^*$, we have $|\la_i - \la_{k0}| \ge \dfrac{1}{2}k^2$. For $k < k^*$ we can use \eqref{silly_est}:
    $$
    |\la_i - \la_{k0}| \ge \Omega^2 \ge \dfrac{\Omega^2k^2}{(k^*)^2}.
    $$
    It means that $|\la_i - \la_{k0}| \ge \varepsilon k^2$, where 
    $$
    \varepsilon = \varepsilon(\Omega) := \min{\bigg\{ \dfrac{1}{2}, \dfrac{\Omega^2}{(k^*)^2} \bigg\}}.
    $$
    For $|\la_i - \la_{k1}|$, the analogous estimate can be proved. 
    Hence, for every $S \in \mathcal B_{\Omega, K}$, the chosen points $\la_i$ ($i = \overline{1,p+1}$) belong to $\Lambda_{\mathcal E, \varepsilon, S}$, where $\mathcal E =\mathcal E(\Omega)$ and $\varepsilon = \varepsilon(\Omega)$. Therefore, the estimate \eqref{estr1} implies
    $$
    	|r_1(\la_i)| \le C(\Omega, K), \quad i = \overline{1,p+1}.
    $$
    Applying the Lagrange interpolation formula by the points $\la_i$, we arrive at the assertion of the lemma for the coefficients $\{c_j\}_{j = 0}^p$ of the polynomial $r_1(\la)$. The proof for $\{ d_j \}_{j = 0}^p$ is analogous.
    \end{proof}

Lemmas~\ref{reconstruction}, \ref{lem_sigma_diff}, and \ref{lem_cj_diff} together imply Theorem~\ref{thm_uni_bound}.

\section{Uniform stability} \label{sec:stab}

In this section, we prove Theorem~\ref{thm_uni_stab} on the uniform stability of Inverse Problem~\ref{ip1}. 
A crucial role in our proofs is played by the propoerties of the operator $\tilde H(x)$ from the main equation \eqref{main_eq} and Theorem~\ref{thm_uni_bound} on the uniform boundedness.
At first, we deduce some properties of the operator 
\begin{equation} \label{defA}
\mathcal A := \big( \tilde Q^{(1)}(x)(T^{(1)})^{-1} - \tilde Q^{(2)}(x)(T^{(2)})^{-1} \big),
\end{equation}
and then use them with the reconstruction formulas to get the uniform stability estimates.

\begin{lem} \label{lem_tilde_est_12}
For $S^{(1)}, S^{(2)} \in \mathcal{B}_\Omega$ the following estimates hold:
\begin{enumerate}
    \item $|\tilde\varphi^{(1)}_{n0}(x) - \tilde\varphi^{(2)}_{n0}(x)| \le C(\Omega)\delta_n$,
    \item $|\tilde\psi^{(1)}_{n0}(x) - \tilde\psi^{(2)}_{n0}(x)| \le C(\Omega)\delta_n$,
    \item $\|\tilde H^{(1)}(x) - \tilde H^{(2)}(x)\|_{m \to m} \le C(\Omega)Z$,
\end{enumerate}
where $n \ge 1$, $x \in [0, \pi]$.
\end{lem}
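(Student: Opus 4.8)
The plan is to derive all three bounds from the explicit form $\tilde\varphi_{ni}(x) = \cos(\rho_{ni}x)$ of the model solutions (as already used in the proof of Lemma~\ref{lem_sigma_diff}), together with the observation that, for any $S \in \mathcal{B}_\Omega$, the numbers $\rho_{ni}$ have uniformly bounded imaginary parts: $|\Im\rho_{n0}| \le \xi_n \le \Omega$ and $\Im\rho_{n1} = \Im\tilde\rho_n = 0$. This is precisely what keeps every $\cos$ and $\sin$ of the relevant arguments bounded by $e^{\Omega\pi} = C(\Omega)$ on $[0,\pi]$, so that all constants below depend only on $\Omega$. A second recurring point is that the model part of the data is common to the two collections, i.e. $\rho_{n1}^{(1)} = \rho_{n1}^{(2)} = \tilde\rho_n$ and $\alpha_{n1}^{(1)} = \alpha_{n1}^{(2)} = \tilde\alpha_n$, so that every difference reduces to a perturbation of the $i=0$ (resp. $j=0$) data only, which is exactly what $\delta_n$ measures.

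For estimate~1 I would write $\tilde\varphi^{(1)}_{n0}(x) - \tilde\varphi^{(2)}_{n0}(x) = \cos(\rho_n^{(1)}x) - \cos(\rho_n^{(2)}x)$ and apply $\cos a - \cos b = -2\sin\frac{a+b}{2}\sin\frac{a-b}{2}$: the first sine factor is bounded by $C(\Omega)$, while the second gives $\bigl|\sin\frac{(\rho_n^{(1)}-\rho_n^{(2)})x}{2}\bigr| \le C(\Omega)|\rho_n^{(1)}-\rho_n^{(2)}| \le C(\Omega)\delta_n$. For estimate~2 I would use $\tilde\psi^{(j)}_{n0}(x) = (\hat\rho_n^{(j)})^{-1}\bigl(\tilde\varphi^{(j)}_{n0}(x) - \tilde\varphi_{n1}(x)\bigr)$, which, since $\rho_{n1} = \tilde\rho_n$ is common, equals $\Psi_n(\rho_n^{(j)})$ with $\Psi_n(\rho) = -x\int_0^1 \sin\bigl((\tilde\rho_n + s(\rho-\tilde\rho_n))x\bigr)\,ds$. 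This integral representation is valid also at $\hat\rho_n = 0$, where it reduces to $\dot{\tilde\varphi}_{n1}(x)$, so it interpolates smoothly across the singularity of $T_n$. Differentiating under the integral sign shows $|\Psi_n'| \le C(\Omega)$ along the segment joining $\rho_n^{(1)}$ to $\rho_n^{(2)}$ (again because the imaginary parts of the arguments stay $\le \Omega$), and a mean-value estimate gives $|\Psi_n(\rho_n^{(1)}) - \Psi_n(\rho_n^{(2)})| \le C(\Omega)|\rho_n^{(1)}-\rho_n^{(2)}| \le C(\Omega)\delta_n$.

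Estimate~3 is the main part. Since $\|\cdot\|_{m\to m}$ is the supremum over rows of the $l_1$-sum of the entries, it suffices to prove an entrywise bound of the form $|\tilde H^{(1)}_{ni,kj}(x) - \tilde H^{(2)}_{ni,kj}(x)| \le \frac{C(\Omega)}{|n-k|+1}\bigl(\delta_n\xi_k + \xi_n\delta_k + \delta_k\bigr)$. I would obtain this by telescoping, passing from $(\rho^{(1)},\alpha^{(1)})$ to $(\rho^{(2)},\alpha^{(2)})$ one factor at a time in the explicit expression for $\tilde H_{ni,kj}$. Recalling $\tilde Q_{ni,kj}(x) = \alpha_{kj}\int_0^x\cos(\rho_{ni}t)\cos(\rho_{kj}t)\,dt$ and $\tilde H_{nk} = T_n\tilde Q_{nk}T_k^{-1}$ from \eqref{Hnk}, each entry of $\tilde H$ is, after the $T_n,T_k^{-1}$ transforms, a first- or second-order divided difference of this kernel in the indices $n$ and $k$; the difference of two such divided differences is controlled by the Lipschitz estimates of parts~1--2 (and their quasi-derivative analogues), by $|\alpha_{k0}^{(1)} - \alpha_{k0}^{(2)}| \le \delta_k$ (the $j=1$ column contributing nothing, as $\alpha_{k1}$ is common), and by the oscillatory decay $\frac{1}{|n-k|+1}$ of $\int_0^x\cos(\rho_{ni}t)\cos(\rho_{kj}t)\,dt$, exactly as in estimate~7 of Lemma~\ref{lem_tilde_est}. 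Summing over $k,j$ and applying Cauchy--Schwarz, $\sum_k\frac{\delta_k}{|n-k|+1} \le Z\,\bigl(\sum_k(|n-k|+1)^{-2}\bigr)^{1/2} \le C\,Z$, while the $\delta_n$-terms are handled through $\sup_n\delta_n \le Z$ and $\sup_n\xi_n \le \Omega$; this yields $\|\tilde H^{(1)}(x) - \tilde H^{(2)}(x)\|_{m\to m} \le C(\Omega)Z$.

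I expect the delicate point to lie entirely in the bookkeeping of estimate~3: one must verify that after the transforms $T_n,T_k^{-1}$ the factors $(\hat\rho_n^{(j)})^{-1}$ and $(\hat\rho_k^{(j)})^{-1}$ cancel uniformly (including across $\hat\rho_n=0$ or $\hat\rho_k=0$, where the integral and derivative representations must replace the naive quotients), and that each term of the telescoping carries exactly one $\delta$-factor paired with an $l_2$-summable weight, so that the resulting double sum converges to a multiple of $Z$ rather than diverging. This is the same mechanism as in the proofs of Lemma~\ref{lem_tilde_est} and the corresponding results of \cite{BondR25, LocSolv}, now applied to the difference of the two data sets.
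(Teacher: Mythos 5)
Your proposal is correct and follows exactly the route the paper intends: the paper gives no proof of this lemma, deferring to the analogous Lemma~6.1 of \cite{BondR25}, and that argument is precisely what you reconstruct --- explicit cosines for the model solutions, integral (divided-difference) representations to absorb the factors $(\hat\rho_n)^{-1}$ uniformly across $\hat\rho_n=0$, entrywise telescoping with one $\delta$-factor per term against the decay $(|n-k|+1)^{-1}$, and Cauchy--Schwarz to reach $C(\Omega)Z$. No gaps; your bookkeeping for estimate~3 matches the mechanism already used for estimate~7 of Lemma~\ref{lem_tilde_est}.
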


This lemma is proved similarly to \cite[Lemma~6.1]{BondR25}.

\begin{lem} \label{lem_qt_12}
For each fixed $x \in [0, \pi]$ and $S^{(1)}, S^{(2)} \in \mathcal{B}_\Omega$, the operator $\mathcal A$, which is defined by \eqref{defA},
maps $m$ to $l_2$ and is uniformly bounded. Moreover, if $f \in m$, then
$$
\|(\mathcal A f)_{ni}\|_{l_2} \le C(\Omega)Z\|f\|_m.
$$
\end{lem}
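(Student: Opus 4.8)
The plan is to treat $\mathcal A$ as the ``difference version'' of the operator $\tilde Q(x)T^{-1}$ analysed in Lemma~\ref{lem_qt_oper}, and to reuse that estimate while extracting one extra factor of the type $\delta_n$ at each step. First I would exploit the fact that both $S^{(1)}$ and $S^{(2)}$ share the same model problem $\tilde L$ (they have the same $p$): consequently $\rho_{k1}^{(1)} = \rho_{k1}^{(2)} = \tilde\rho_k$, $\alpha_{k1}^{(1)} = \alpha_{k1}^{(2)} = \tilde\alpha_k$, and the kernel $\tilde D(x, \cdot, \cdot)$ is literally the same function for both problems; only the $L$-quantities $\rho_{k0}^{(j)}$, $\alpha_{k0}^{(j)}$ differ. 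In particular $\hat\rho_k^{(1)} - \hat\rho_k^{(2)} = \rho_{k0}^{(1)} - \rho_{k0}^{(2)}$, so $|\hat\rho_k^{(1)} - \hat\rho_k^{(2)}| \le \delta_k$ by \eqref{defZ}.

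Next I would write out the components explicitly. Using \eqref{tildeQ_def} and the rule for $T^{-1}$, one has
\begin{equation*}
\big(\tilde Q^{(j)}(x)(T^{(j)})^{-1}f\big)_{ni} = \sum_{k=1}^\infty \Big[ \tilde Q^{(j)}_{ni,k0}(x)\,\hat\rho_k^{(j)}\,f_{k0} + \big( \tilde Q^{(j)}_{ni,k0}(x) - \tilde Q^{(j)}_{ni,k1}(x) \big) f_{k1} \Big],
\end{equation*}
so $(\mathcal A f)_{ni}$ is the difference of two such sums. I would then telescope each summand by inserting crossing terms, e.g.
\begin{equation*}
\tilde Q^{(1)}_{ni,k0}\hat\rho_k^{(1)} - \tilde Q^{(2)}_{ni,k0}\hat\rho_k^{(2)} = \big( \tilde Q^{(1)}_{ni,k0} - \tilde Q^{(2)}_{ni,k0} \big)\hat\rho_k^{(1)} + \tilde Q^{(2)}_{ni,k0}\big( \hat\rho_k^{(1)} - \hat\rho_k^{(2)} \big),
\end{equation*}
and analogously for the $(\tilde Q_{ni,k0} - \tilde Q_{ni,k1})f_{k1}$ part. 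The factor $\tilde Q^{(1)}_{ni,k0} - \tilde Q^{(2)}_{ni,k0}$ is in turn split by the Lipschitz dependence of $\tilde D$ on its two spectral arguments and of $\alpha_{k0}$ on the problem, producing terms proportional to $|\alpha_{k0}^{(1)} - \alpha_{k0}^{(2)}| \le \delta_k$, to $\delta_n$ (from the $\rho_{ni}$ argument), and to $\delta_k$ (from the $\rho_{k0}$ argument); these differences are exactly what Lemma~\ref{lem_tilde_est_12} controls. After this bookkeeping, each piece of $(\mathcal A f)_{ni}$ carries precisely one difference factor ($\delta_n$, $\delta_k$, or $|\alpha_{k0}^{(1)}-\alpha_{k0}^{(2)}|$), multiplied by quantities that are uniformly bounded for $S^{(1)}, S^{(2)} \in \mathcal B_\Omega$ and that retain the off-diagonal kernel decay $\tfrac{C(\Omega)}{|n-k|+1}$ coming from estimate~7 of Lemma~\ref{lem_tilde_est}.

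Finally, I would estimate each resulting sum exactly as in the proof of Lemma~\ref{lem_qt_oper} (Lemma~4.4 of \cite{LocSolv}), but keeping the extra $\delta$-factor. For the terms carrying $\delta_k$, the $(n,i)$-component is a convolution of the $l_2$-sequence $\{\delta_k\}$ against the decaying kernel, weighted by $\|f\|_m$; by Young's inequality for convolutions its $l_2$-norm over $(n,i)$ is bounded by $C(\Omega)\,\|\{\delta_k\}\|_{l_2}\,\|f\|_m = C(\Omega)Z\|f\|_m$. For the terms carrying $\delta_n$, the factor $\delta_n$ comes out of the sum over $k$, the remaining sum being uniformly bounded by $C(\Omega)\|f\|_m$, and taking the $l_2$-norm over $n$ again gives $C(\Omega)Z\|f\|_m$ since $\|\{\delta_n\}\|_{l_2} = Z$ by \eqref{defZ}. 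Summing the finitely many pieces yields the asserted bound on $\mathcal A$ from \eqref{defA}. The hard part will be the bookkeeping in the telescoping step: one must arrange the crossing terms so that every summand contains exactly one difference factor while the remaining cofactors stay both uniformly bounded \emph{and} square- or convolution-summable, so that the single-problem estimates of Lemma~\ref{lem_qt_oper} apply with the extra $\delta$ producing the gain $Z$. The accompanying technical point is verifying the Lipschitz estimates for $\tilde D$ in each spectral argument (the content behind estimates~1 and~2 of Lemma~\ref{lem_tilde_est_12}), but this is routine given the explicit form $\tilde\varphi_{kj}(x) = \cos(\rho_{kj}x)$ of the model eigenfunctions together with the asymptotics \eqref{eigen_asymp} and \eqref{weight_asymp}.
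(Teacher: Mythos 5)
Your overall strategy --- telescoping the difference so that each summand carries exactly one small factor ($\delta_n$, $\delta_k$, or $|\alpha_{k0}^{(1)}-\alpha_{k0}^{(2)}|$) and then reusing the single-problem machinery of Lemma~\ref{lem_qt_oper} --- is the same as the paper's, but the summation mechanism is genuinely different. The paper never works componentwise with the off-diagonal kernel decay: it writes $(\mathcal A f)_{ni}$ as $\int_0^x(\tilde\varphi^{(1)}_{ni}-\tilde\varphi^{(2)}_{ni})F_1\,dt+\int_0^x\tilde\varphi^{(2)}_{ni}F_2\,dt$ with two explicit functions $F_1,F_2\in L_2(0,\pi)$ ($F_1$ is the single-problem series handled as in Lemma~\ref{lem_qt_oper}, $F_2$ is the series of differences with $\|F_2\|_{L_2}\le C(\Omega)Z\|f\|_m$), and then gets $l_2$-summability in $n$ from estimate~1 of Lemma~\ref{lem_tilde_est_12} for the first term and from a Bessel-type inequality (the components being Fourier-type coefficients of $F_2$) for the second. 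This sidesteps two technical points on which your route, as written, is not yet sound. First, Young's inequality does not give what you claim: the kernel $\{1/(|j|+1)\}$ is not in $l_1$, so Young only yields $l_r$ with $r>2$; the fact you need is Hilbert's inequality (the $l_2$-boundedness of the operator with kernel $1/(|n-k|+1)$), which is true and standard in the method of spectral mappings but is a different tool. Second, the componentwise bound $C\delta_k/(|n-k|+1)$ for the telescoped summands holds for the leading (linear in the spectral differences) terms, but the quadratic remainders --- e.g.\ the $O(\delta_k^2)$ part of $\cos(\rho^{(1)}_{k0}t)-\cos(\rho^{(2)}_{k0}t)$ after extracting the linear term --- lose the off-diagonal decay if estimated in absolute value, giving only an $l_\infty$ bound in $n$; to retain the decay one must keep their oscillatory structure (Taylor remainder in integral form) or fall back on the Fourier-coefficient viewpoint. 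Both issues are fixable, so your route can be made to work, but the paper's $F_1/F_2$ decomposition avoids them entirely.
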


\begin{proof}
Let $f \in m$. Then, due to \eqref{tildeQ_def} and \eqref{Tn_def}, there holds
$$
\tilde Q^{(j)}_{nk}(x)(T^{(j)}_k)^{-1}f_k = \begin{pmatrix}
\tilde Q^{(j)}_{n0,k0}(x)(\hat\rho^{(j)}_kf_{k0}+f_{k1}) - \tilde Q^{(j)}_{n0,k1}(x)f_{k1} \\
\tilde Q^{(j)}_{n1,k0}(x)(\hat\rho^{(j)}_kf_{k0}+f_{k1}) - \tilde Q^{(j)}_{n1,k1}(x)f_{k1}
\end{pmatrix}, \quad j = 1, 2, \quad n, k \ge 1.
$$

Using \eqref{D_def}, we get
\begin{multline} \label{intF1F2}
\bigg( \Big( \tilde Q^{(1)}(x)(T^{(1)})^{-1} - \tilde Q^{(2)}(x)(T^{(2)})^{-1} \Big) f \bigg)_{ni} \\ = \sum\limits_{n} \int\limits_{0}^{x} ( \tilde\varphi_{ni}^{(1)}(t) - \tilde\varphi_{ni}^{(2)}(t) ) F_1(t) \, dt + \sum\limits_{n} \int\limits_{0}^{x} \tilde\varphi_{ni}^{(2)}(t) F_2(t) \, dt,
\end{multline}
where
\begin{gather}
\label{F1_12}
F_1(t) = \sum\limits_{k} ( \hat\rho_k^{(1)}f_{k0}\alpha_{k0}^{(1)}\tilde\varphi_{k0}^{(1)}(t) + f_{k1} ( \alpha_{k0}^{(1)}\tilde\varphi_{k0}^{(1)}(t) - \alpha_{k1}^{(1)}\tilde\varphi_{k1}^{(1)}(t) ) ), \\
\label{F2_12}
F_2(t) = \sum\limits_{k}( f_{k0} ( \hat\rho_k^{(1)}\alpha_{k0}^{(1)}\tilde\varphi_{k0}^{(1)}(t) - \hat\rho_k^{(2)}\alpha_{k0}^{(2)}\tilde\varphi_{k0}^{(2)}(t) ) + f_{k1} ( \alpha_{k0}^{(1)}\tilde\varphi_{k0}^{(1)}(t) - \alpha_{k0}^{(2)}\tilde\varphi_{k0}^{(2)}(t) ) ). 
\end{gather}

Similarly to Lemma~\ref{lem_qt_oper}, we can get that $F_1(t) \in L_2(0, \pi)$. Using \eqref{defZ} and the estimate 1 of Lemma~\ref{lem_tilde_est_12} we can get that series in \eqref{F2_12} converges in $L_2(0, \pi)$. So, $F_2(t) \in L_2(0, \pi)$ and $\|F_2(t)\|_{L_2(0, \pi)} \le C(\Omega)Z\|f\|_m$. This yields the claim.

\end{proof}

\begin{lem} \label{lem_diff_qt_12}
Let $f \in m$ and $q_{ni} = (\mathcal A f)_{ni}$. Then $\{(q_{n0} - q_{n1}) (x)\}_{n \ge 1} \in l_1$ and $\|(q_{n0} - q_{n1}) (x)\|_{l_1} \le C(\Omega)Z\|f\|_m$.
\end{lem}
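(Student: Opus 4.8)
The plan is to start from the representation \eqref{intF1F2} of $q_{ni}=(\mathcal A f)_{ni}$ obtained in the proof of Lemma~\ref{lem_qt_12} and to form the componentwise difference $q_{n0}-q_{n1}$. The key simplification is that both problems $L^{(1)}$ and $L^{(2)}$ share the same model problem $\tilde L$, so $\tilde\varphi_{n1}^{(1)}(t)=\tilde\varphi_{n1}^{(2)}(t)=\tilde\varphi_{n1}(t)=\cos(\tilde\rho_n t)$. Consequently the contribution of $\tilde\varphi_{n1}$ to the $F_1$-integral cancels, and I would arrive at
\[
q_{n0}-q_{n1}=\int_0^x\big(\tilde\varphi_{n0}^{(1)}(t)-\tilde\varphi_{n0}^{(2)}(t)\big)F_1(t)\,dt+\int_0^x\big(\tilde\varphi_{n0}^{(2)}(t)-\tilde\varphi_{n1}(t)\big)F_2(t)\,dt=:I_n+J_n,
\]
with $F_1,F_2$ as in \eqref{F1_12}, \eqref{F2_12}. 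The two sums $\sum_n|I_n|$ and $\sum_n|J_n|$ will be estimated separately, and both will be shown to be bounded by $C(\Omega)Z\|f\|_m$.

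For $\sum_n|J_n|$ I would simply repeat the argument of Lemma~\ref{lem_q_diff} with $F_2$ in the role of the $L_2$-density, since the difference $\tilde\varphi_{n0}^{(2)}-\tilde\varphi_{n1}$ has exactly the structure treated there. This yields $\sum_n|J_n|\le C(\Omega)\|F_2\|_{L_2(0,\pi)}$, and, because Lemma~\ref{lem_qt_12} already gives $\|F_2\|_{L_2(0,\pi)}\le C(\Omega)Z\|f\|_m$, the required factor $Z$ for the second sum appears immediately.

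The new ingredient is $\sum_n|I_n|$, where the gain of $Z$ must come from the proximity of the two spectra. Writing $\tilde\varphi_{n0}^{(j)}(t)=\cos(\rho_n^{(j)}t)$ and using the integral form of the difference quotient, I would factor $\cos(\rho_n^{(1)}t)-\cos(\rho_n^{(2)}t)=(\rho_n^{(1)}-\rho_n^{(2)})g_n(t)$ with $g_n(t)=-t\int_0^1\sin\big((s\rho_n^{(1)}+(1-s)\rho_n^{(2)})t\big)\,ds$, so that $I_n=(\rho_n^{(1)}-\rho_n^{(2)})\int_0^x g_n(t)F_1(t)\,dt$. By \eqref{eigen_asymp} the exponents $s\rho_n^{(1)}+(1-s)\rho_n^{(2)}$ stay within a fixed distance (controlled by $\Omega$) of the integers $n-p-1$ uniformly over $S^{(1)},S^{(2)}\in\mathcal B_\Omega$, so the family $\{g_n\}$ is a Bessel system on $(0,\pi)$ with a constant depending only on $\Omega$; this is the same near-orthogonality estimate used in \cite{LocSolv,BondR25}. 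Applying the Cauchy--Schwarz inequality in $n$ then gives
\[
\sum_n|I_n|\le\Big(\sum_n|\rho_n^{(1)}-\rho_n^{(2)}|^2\Big)^{1/2}\Big(\sum_n\Big|\int_0^x g_n(t)F_1(t)\,dt\Big|^2\Big)^{1/2}\le Z\cdot C(\Omega)\|F_1\|_{L_2(0,\pi)},
\]
where $\sum_n|\rho_n^{(1)}-\rho_n^{(2)}|^2\le Z^2$ by \eqref{defZ} and $\|F_1\|_{L_2(0,\pi)}\le C(\Omega)\|f\|_m$ by Lemma~\ref{lem_qt_12}. Summing the two bounds yields $\{(q_{n0}-q_{n1})(x)\}\in l_1$ with $\|(q_{n0}-q_{n1})(x)\|_{l_1}\le C(\Omega)Z\|f\|_m$, as claimed.

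The main obstacle I anticipate is the uniform Bessel estimate for $\{g_n\}$: one must verify that the near-orthogonality constant of these difference-quotient cosines can be chosen independently of the particular collections $S^{(1)},S^{(2)}$, relying only on the common bound $\Omega$ on the $l_2$-perturbations. Everything else is a routine combination of the Cauchy--Schwarz inequality with the $L_2$-bounds on $F_1,F_2$ from Lemma~\ref{lem_qt_12} and the cancellation afforded by the shared model problem.
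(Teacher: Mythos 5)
Your proposal is correct and follows essentially the same route as the paper: the identical splitting of $q_{n0}-q_{n1}$ into the $F_1$- and $F_2$-integrals (using $\tilde\varphi_{n1}^{(1)}=\tilde\varphi_{n1}^{(2)}$), the factor $Z$ coming from $\|F_2\|_{L_2(0,\pi)}\le C(\Omega)Z\|f\|_m$ for the second term, and extraction of $\delta_n$ from $\cos(\rho_n^{(1)}t)-\cos(\rho_n^{(2)}t)$ followed by a Bessel-type $l_2$ bound and the Cauchy--Schwarz inequality for the first. The only cosmetic difference is that the paper linearizes the cosine difference by a Taylor expansion at the fixed model frequencies $\rho_{n1}$ (so the $k_n$ are ordinary Fourier coefficients of $tF_1(t)$, plus an $O(\delta_n^2)$ remainder), whereas you use the exact integral form and must invoke the uniform Bessel property of the perturbed system $\{g_n\}$ — both are standard and both work.
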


\begin{proof}
    Using the definition \eqref{defA} of the operator $\mathcal{A}$ and \eqref{intF1F2}, we obtain
    $$
    |q_{n0} - q_{n1}| \le \Bigg| \int\limits_{0}^{x} F_1(t)( \tilde\varphi_{n0}^{(1)}(t) - \tilde\varphi_{n0}^{(2)}(t) ) \, dt \Bigg| + \Bigg| \int\limits_{0}^{x} F_2(t)( \tilde\varphi_{n0}^{(2)}(t) - \tilde\varphi_{n1}^{(2)}(t) ) \, dt \Bigg|,
    $$
    where $F_1(t)$ and $F_2(t)$ are defined by \eqref{F1_12}-\eqref{F2_12}.

    For the first term we can get:
    $$
    \Bigg| \int\limits_{0}^{x} F_1(t)( \tilde\varphi_{n0}^{(1)}(t) - \tilde\varphi_{n0}^{(2)}(t) ) \, dt \Bigg| \le \delta_n|k_n(x)| + O(\delta_n^2)\Bigg| \int\limits_{0}^{x} F_1(t) \, dt \Bigg|,
    $$
    where $k_n(x) = \Bigg| \int\limits_{0}^{x} F_1(t) t \sin\rho_{n1}t \, dt \Bigg|$. Next, we apply that 
$$    
    \{k_n\} \in l_2 \quad \text{and} \quad \|\{k_n\}\|_{l_2} \le C(\Omega)\|F_1(t)\|_{L_2(0, \pi)} 
$$    
    as the Fourier coefficients. The term with $F_2(t)$ can be investigated in the same way as in Lemma~\ref{lem_qt_12}. Due to these facts, we arrive at the assertion of this lemma.
\end{proof}

Next we can build the specific sequences $\{ g^{(i)}_{nj}(x) \}_{n \ge 1}$ by the formula $g^{(i)}_{nj}(x) = \tilde\varphi^{(i)}_{nj}(x) - \varphi^{(i)}_{nj}(x)$, $n \ge 1$, $j=0, 1$, $i = 1, 2$.

\begin{lem}
    \label{lem_g_12}
    For $S^{(1)}, S^{(2)} \in \mathcal{B}_{\Omega, K}$ we have:
    \begin{enumerate}
        \item $\{g^{(1)}_{n1}(x) - g^{(2)}_{n1}(x)\} \in l_2$ and $\|\{g^{(1)}_{n1}(x) - g^{(2)}_{n1}(x)\}\|_{l_2} \le C(\Omega, K)$,
        \item $\{g^{(1)}_{n0}(x) - g^{(2)}_{n0}(x) - g^{(1)}_{n1}(x) + g^{(2)}_{n1}(x)\} \in l_1$ and $\|\{g^{(1)}_{n0}(x) - g^{(2)}_{n0}(x) - g^{(1)}_{n1}(x) + g^{(2)}_{n1}(x)\}\|_{l_1} \le C(\Omega, K)$,
    \end{enumerate}
    where $n \ge 1$, $x \in [0, \pi]$.
\end{lem}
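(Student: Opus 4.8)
The plan is to reduce the two assertions to the single-problem estimates of Lemma~\ref{lem_est} together with the ``difference'' operator lemmas of this section, via stability of the solution of the main equation. Recall from the text preceding Lemma~\ref{lem_est} that $g^{(i)}(x) = \tilde Q^{(i)}(x)(T^{(i)})^{-1}\psi^{(i)}(x)$ for $i = 1, 2$. Adding and subtracting $\tilde Q^{(2)}(x)(T^{(2)})^{-1}\psi^{(1)}(x)$, I would split
\[
g^{(1)}(x) - g^{(2)}(x) = \mathcal A\,\psi^{(1)}(x) + \tilde Q^{(2)}(x)(T^{(2)})^{-1}\bigl( \psi^{(1)}(x) - \psi^{(2)}(x) \bigr),
\]
where $\mathcal A$ is the operator defined in \eqref{defA}. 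In the first summand the factor $Z$ is produced by the operator $\mathcal A$ itself (Lemmas~\ref{lem_qt_12} and \ref{lem_diff_qt_12}), whereas in the second summand it will come from the smallness of $\psi^{(1)}(x) - \psi^{(2)}(x)$ in $m$.

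The key step is the bound $\|\psi^{(1)}(x) - \psi^{(2)}(x)\|_m \le C(\Omega, K)Z$. Since $\psi^{(i)}(x) = (E + \tilde H^{(i)}(x))^{-1}\tilde\psi^{(i)}(x)$ by the main equation \eqref{main_eq}, I would apply the resolvent identity
\[
\psi^{(1)} - \psi^{(2)} = (E + \tilde H^{(1)})^{-1}(\tilde\psi^{(1)} - \tilde\psi^{(2)}) + (E + \tilde H^{(1)})^{-1}(\tilde H^{(2)} - \tilde H^{(1)})(E + \tilde H^{(2)})^{-1}\tilde\psi^{(2)},
\]
with the dependence on $x$ suppressed. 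Both inverse operators are bounded in $m$ by $K$, because $S^{(1)}, S^{(2)} \in \mathcal B_{\Omega, K}$. For the first term I would note that the second components $\tilde\psi_{n1}(x) = \tilde\varphi_{n1}(x) = \cos(\tilde\rho_n x)$ are determined by the common model data and hence coincide for the two problems, so that estimate~2 of Lemma~\ref{lem_tilde_est_12} gives $\|\tilde\psi^{(1)} - \tilde\psi^{(2)}\|_m \le C(\Omega)\sup_n \delta_n \le C(\Omega)Z$. For the second term I would combine $\|\tilde H^{(2)} - \tilde H^{(1)}\|_{m \to m} \le C(\Omega)Z$ (estimate~3 of Lemma~\ref{lem_tilde_est_12}) with $\|\tilde\psi^{(2)}\|_m \le C(\Omega)$ from Lemma~\ref{lem_tilde_est}. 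Collecting the two contributions yields the claimed bound on $\psi^{(1)} - \psi^{(2)}$.

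It then remains to estimate each summand of the splitting. For assertion~1, applying Lemma~\ref{lem_qt_12} to $\mathcal A\,\psi^{(1)}$ (using $\|\psi^{(1)}\|_m \le C(\Omega, K)$ from estimate~1 of Lemma~\ref{lem_est}) and Lemma~\ref{lem_qt_oper} to $\tilde Q^{(2)}(T^{(2)})^{-1}(\psi^{(1)} - \psi^{(2)})$ shows that $\{g^{(1)}_{ni}(x) - g^{(2)}_{ni}(x)\} \in l_2$ with $l_2$-norm at most $C(\Omega, K)Z$; the case $i = 1$ is assertion~1. For assertion~2, I would treat the second-order difference $(g^{(1)}_{n0} - g^{(2)}_{n0}) - (g^{(1)}_{n1} - g^{(2)}_{n1})$ termwise: Lemma~\ref{lem_diff_qt_12} applied to $\mathcal A\,\psi^{(1)}$ and Lemma~\ref{lem_q_diff} applied to $\tilde Q^{(2)}(T^{(2)})^{-1}(\psi^{(1)} - \psi^{(2)})$ both produce $l_1$ bounds carrying the factor $Z$, which add up to the required estimate. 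I expect the main obstacle to be the key step of the second paragraph, namely converting the operator-norm closeness $\|\tilde H^{(1)} - \tilde H^{(2)}\|_{m \to m} \le C(\Omega)Z$ into closeness of the solutions $\psi^{(1)}, \psi^{(2)}$ in $m$; this is precisely where the uniform bound $K$ on the inverse operators $(E + \tilde H^{(i)})^{-1}$, guaranteed by membership in $\mathcal B_{\Omega, K}$, is indispensable.
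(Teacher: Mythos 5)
Your proposal is correct and follows essentially the same route as the paper, whose entire proof is the remark that Lemmas~\ref{lem_qt_12} and \ref{lem_diff_qt_12} should be applied with $f = \psi(x)$. You usefully make explicit the two ingredients the paper leaves implicit: the splitting $g^{(1)} - g^{(2)} = \mathcal A\,\psi^{(1)} + \tilde Q^{(2)}(T^{(2)})^{-1}(\psi^{(1)} - \psi^{(2)})$ (needed because $\psi^{(1)} \ne \psi^{(2)}$, so applying $\mathcal A$ to a single sequence does not by itself produce $g^{(1)} - g^{(2)}$) and the resolvent-identity bound $\|\psi^{(1)} - \psi^{(2)}\|_m \le C(\Omega,K)Z$, which in fact yields the stronger estimates with the factor $Z$ that are actually used later in the proof of Lemma~\ref{lem_sigma_est_12}.
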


This lemma can be simply proved by applying Lemmas~\ref{lem_qt_12} and \ref{lem_diff_qt_12} with the sequence $f = \psi(x)$.

Next, for $S^{(i)}$ and $\varphi^{(i)}(x)$, we can build the functions $\sigma^{(i)}(x)$ using \eqref{rec_sigma}.

\begin{lem} \label{lem_sigma_est_12}
For each $S^{(1)}, S^{(2)} \in \mathcal{B}_{\Omega, K}$, we have $\|\sigma^{(1)} - \sigma^{(2)}\|_{L_2(0, \pi)} \le C(\Omega, K)Z$.
\end{lem}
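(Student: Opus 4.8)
The plan is to reuse the seven-term decomposition of the reconstruction formula \eqref{rec_sigma} introduced in the proof of Lemma~\ref{lem_sigma_diff}. Writing $\sigma^{(i)}(x) = \sum_{j=1}^{7} S_j^{(i)}(x)$ for $i = 1, 2$ with the same splitting, I would reduce the claim to proving $\|S_j^{(1)} - S_j^{(2)}\|_{L_2(0,\pi)} \le C(\Omega,K)Z$ for each $j = \overline{1,7}$ and then summing. A convenient simplification is that all model quantities coincide for the two problems: since both are compared with the same $\tilde L$, one has $\alpha_{k1}^{(1)} = \alpha_{k1}^{(2)} = \tilde\alpha_k$ and $\tilde\varphi_{k1}^{(1)}(x) = \tilde\varphi_{k1}^{(2)}(x) = \cos(\tilde\rho_k x)$, so every difference is driven by the quantities $\alpha_{k0}^{(1)} - \alpha_{k0}^{(2)}$, $\tilde\varphi_{k0}^{(1)} - \tilde\varphi_{k0}^{(2)}$, and the differences of the functions $g_{kj}^{(i)}$, all of which are controlled by $\delta_k$ and $Z$ through \eqref{defZ} and Lemmas~\ref{lem_tilde_est_12} and \ref{lem_g_12}.

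The main obstacle is the leading term $S_1$, which converges only in $L_2$ and therefore cannot be treated by the triangle inequality. Here $S_1^{(1)}(x) - S_1^{(2)}(x) = \sum_k (\alpha_{k0}^{(2)} - \alpha_{k0}^{(1)}) \cos\big( 2(k-p-1)x \big)$, and I would estimate it using the orthogonality of the system $\{\cos(2(k-p-1)x)\}$ on $[0,\pi]$ together with Bessel's inequality and the bound $\|\{\alpha_{k0}^{(1)} - \alpha_{k0}^{(2)}\}\|_{l_2} \le Z$ coming from \eqref{defZ}; the finitely many coinciding frequencies for small $k$ are absorbed into $C(\Omega)Z$ by the Cauchy--Schwarz inequality. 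This yields $\|S_1^{(1)} - S_1^{(2)}\|_{L_2(0,\pi)} \le C(\Omega)Z$.

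For $j = \overline{2,7}$ the series converge absolutely, so I would estimate each pointwise in $x$. Every summand is a product of spectral-data factors ($\alpha_{k0}$, $\hat\rho_k$) and function factors ($\tilde\varphi_{kj}$, $g_{kj}$); I would expand each difference $S_j^{(1)} - S_j^{(2)}$ by the telescoping identity, differencing one factor at a time. Each differenced factor is uniformly small: $|\alpha_{k0}^{(1)} - \alpha_{k0}^{(2)}| \le \delta_k$ by \eqref{defZ}, $|\tilde\varphi_{k0}^{(1)} - \tilde\varphi_{k0}^{(2)}| \le C(\Omega)\delta_k$ by estimate~1 of Lemma~\ref{lem_tilde_est_12}, and the $g$-type differences $g_{k1}^{(1)} - g_{k1}^{(2)}$ and $g_{k0}^{(1)} - g_{k0}^{(2)} - g_{k1}^{(1)} + g_{k1}^{(2)}$ are controlled by Lemma~\ref{lem_g_12} (whose bounds carry the factor $Z$, being derived by applying the $Z$-proportional estimates of Lemmas~\ref{lem_qt_12} and \ref{lem_diff_qt_12} to $f = \psi(x)$), while the undifferenced factors are bounded by estimates 1--2 of Lemma~\ref{lem_tilde_est} and estimates 2--4 of Lemma~\ref{lem_est}. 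Summing in $k$, the terms pairing a bare $\delta_k$ with an $l_2$-sequence are handled by the Cauchy--Schwarz inequality, producing a factor $\|\{\delta_k\}\|_{l_2} \le Z$, and the terms containing the $l_1$-summable $g$-combinations give absolutely and uniformly convergent series bounded by $C(\Omega,K)Z$. Adding the seven estimates yields $\|\sigma^{(1)} - \sigma^{(2)}\|_{L_2(0,\pi)} \le C(\Omega,K)Z$, as claimed.
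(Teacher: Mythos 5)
Your overall route coincides with the paper's: both telescope the reconstruction formula \eqref{rec_sigma}, isolate a leading term that converges only in $L_2$ and treat it by near-orthogonality of the cosines, and control the remaining terms via $|\alpha_{k0}^{(1)}-\alpha_{k0}^{(2)}|\le\delta_k$, estimate~1 of Lemma~\ref{lem_tilde_est_12}, and Lemmas~\ref{lem_est} and \ref{lem_g_12} (you are also right that the bounds of Lemma~\ref{lem_g_12} carry the factor $Z$, since they come from Lemmas~\ref{lem_qt_12} and \ref{lem_diff_qt_12} applied to $f=\psi(x)$). Your treatment of $S_1^{(1)}-S_1^{(2)}$ is correct.

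However, the claim that for $j=\overline{2,7}$ the differenced series converge absolutely and can be estimated pointwise fails for $j=7$. With the splitting of Lemma~\ref{lem_sigma_diff},
$$
S_7^{(1)}(x)-S_7^{(2)}(x)=\sum_k 4\tilde\alpha_k\cos(\tilde\rho_kx)\bigl(\cos(\rho_{k0}^{(2)}x)-\cos(\rho_{k0}^{(1)}x)\bigr),
$$
whose general term is $O(\delta_k)$ multiplied only by bounded factors; since $\{\delta_k\}$ is merely in $l_2$, this series need not converge absolutely, and it falls into neither of the two cases of your closing summation argument (here a bare $\delta_k$ is paired with a bounded sequence, not with an $l_2$ sequence or an $l_1$-summable $g$-combination). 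The fix is the same device you already use for $S_1$: expand $\cos(\rho_{k0}^{(2)}x)-\cos(\rho_{k0}^{(1)}x)=(\rho_{k0}^{(1)}-\rho_{k0}^{(2)})\,x\sin(\tilde\rho_kx)+O\bigl(\delta_k(\xi_k^{(1)}+\xi_k^{(2)}+\delta_k)\bigr)$, so that the leading part becomes $\sum_k 2\tilde\alpha_k(\rho_{k0}^{(1)}-\rho_{k0}^{(2)})\,x\sin(2\tilde\rho_kx)$, bounded in $L_2(0,\pi)$ by $C(\Omega)Z$ through a Bessel-type inequality, while the remainder converges absolutely with sum $O(Z)$ by Cauchy--Schwarz. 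This is exactly how the paper handles the corresponding term (its $S_2$) in this lemma, by ``recalling that $\tilde\varphi^{(i)}_{kj}(x)=\cos(\rho^{(i)}_{kj}x)$''. With that correction your argument matches the paper's proof.
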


\begin{proof}

    From the reconstruction formula \eqref{rec_sigma} we can get:
    $$
    \sigma^{(1)}(x) - \sigma^{(2)}(x) = \sum\limits_{j=1}^{2}\sum\limits_{i=0}^{1}\sum\limits_{k=1}^{\infty} (-1)^{i+j}\alpha_{ki}^{(j)}\big( 2 \tilde\varphi_{ki}^{(j)}(x)\varphi_{ki}^{(j)}(x) - 1 \big).
    $$
    This sum can be represented as follows:
    $$
    \sigma^{(1)}(x) - \sigma^{(2)}(x) = \sum\limits_{j=1}^{7}S_j(x),
    $$
    where
    \begin{align*}
        &S_1(x) = \sum_{k} (\alpha_{k1}^{(1)} - \alpha_{k1}^{(2)})(2(\tilde\varphi_{k0}^{(1)}(x))^2 - 1), \\
        &S_2(x) = \sum_k \alpha_{k0}^{(2)}(\tilde\varphi_{k0}^{(1)}(x) - \tilde\varphi_{k0}^{(2)}(x))(\tilde\varphi_{k0}^{(1)}(x) + \tilde\varphi_{k0}^{(2)}(x)), \\
        &S_3(x) = \sum_{k} \alpha_{k0}^{(2)}g_{k0}^{(1)}(\tilde\varphi_{k0}^{(1)}(x) - \tilde\varphi_{k0}^{(2)}(x)), \\
        &S_4(x) = \sum_{k} (\alpha_{k1} - \alpha_{k0}^{(2)}) \tilde\varphi_{k1}^{(2)}(x)(g_{k1}^{(1)}(x) - g_{k1}^{(2)}(x)), \\
        &S_5(x) = \sum_{k} \alpha_{k0}^{(2)} (\tilde\varphi_{k1}^{(2)}(x) - \tilde\varphi_{k0}^{(2)}(x))(g_{k1}^{(1)}(x) - g_{k1}^{(2)}(x)), \\
        &S_6(x) = \sum_{k} \alpha_{k0}^{(2)}\tilde\varphi_{k0}^{(2)}(x))(g_{k1}^{(1)}(x) - g_{k1}^{(2)}(x) - g_{k0}^{(1)}(x) + g_{k0}^{(2)}(x)), \\
        &S_7(x) = \sum_{k}(\alpha_{k1} - \alpha_{k0}^{(2)}) \tilde\varphi_{k0}^{(1)}(x)g_{k0}^{(1)}(x).
    \end{align*}

    In view of \eqref{weight_asymp}, \eqref{defBO}, and \eqref{defZ} we have 
    \begin{equation} \label{estal12}
       |\alpha^{(i)}_{kj}| \le C(\Omega), \quad |\alpha_{k1} - \alpha^{(i)}_{k0}| \le \xi_k, \quad |\alpha^{(1)}_{k0} - \alpha^{(2)}_{k0}| \le \delta_k
    \end{equation}
    for any $S^{(i)} \in B_{\Omega}$. Recall that $\{ \xi_n \} \in l_2$ and $\{ \delta_n \} \in l_2$. Consequently, the series $S_1(x)$ converges in $L_2(0,\pi)$ and $\|S_1(x)\|_{L_2(0, \pi)} \le C(\Omega)Z$. The series $S_2(x)$ can be estimated analogously by recalling that $\tilde \varphi^{(i)}_{kj}(x) = \cos (\rho^{(i)}_{kj} x)$ and the asymptotics \eqref{eigen_asymp}.
    Furthermore, using \eqref{estal12} together with estimates 1, 3 of Lemma~\ref{lem_est}, estimate 2 of Lemma~\ref{lem_tilde_est}, estimate 1 of Lemma~\ref{lem_tilde_est_12} and estimates 1--2 of Lemma~\ref{lem_g_12}, we conclude that, for $j = \overline{3,7}$, the series $S_j(x)$ converge absolutely and uniformly on $[0,\pi]$. Furthermore $|S_j(x)| \le C(\Omega, K)Z$. The sum of the resulting estimates yields the claim. 
\end{proof}

\begin{lem}
\label{diff_cdj_12}
    For each $S^{(1)}, S^{(2)} \in \mathcal{B}_{\Omega, K}$ we have $|c_j^{(1)} - c_j^{(2)}| \le C(\Omega, K)Z$, $|d_j^{(1)} - d_j^{(2)}| \le C(\Omega, K)Z$, $j = \overline{0, p}$.
\end{lem}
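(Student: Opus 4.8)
The plan is to follow the two-step scheme of Lemma~\ref{lem_cj_diff}, now for the differences: first establish a uniform bound of order $Z$ for $r_1^{(1)}(\la) - r_1^{(2)}(\la)$ on a common compact set, and then pass to the coefficients by Lagrange interpolation. I carry out the argument for the $c_j$; the $d_j$ are treated identically via \eqref{rec_r2}.

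\emph{Step 1.} I would show
\[
|r_1^{(1)}(\la) - r_1^{(2)}(\la)| \le C(\Omega, K) Z, \qquad \la \in \Lambda_{\mathcal E, \varepsilon, S^{(1)}} \cap \Lambda_{\mathcal E, \varepsilon, S^{(2)}}.
\]
Writing \eqref{rec_r1} as $r_1^{(j)} = P^{(j)} W^{(j)} (1 - \Sigma^{(j)})$, with $P^{(j)}$ the finite product, $W^{(j)}$ the infinite product and $\Sigma^{(j)}$ the series, I telescope
\begin{align*}
r_1^{(1)} - r_1^{(2)} &= (P^{(1)} - P^{(2)}) W^{(1)}(1 - \Sigma^{(1)}) + P^{(2)}(W^{(1)} - W^{(2)})(1 - \Sigma^{(1)}) \\
&\quad - P^{(2)} W^{(2)} (\Sigma^{(1)} - \Sigma^{(2)}),
\end{align*}
and combine the uniform bounds $|P^{(j)}|, |W^{(j)}|, |1 - \Sigma^{(j)}| \le C$ of Lemma~\ref{lem_cj_diff} with difference estimates for each factor. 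Since both problems share the same $p$ and hence the same model $\tilde L$, the model data $\la_{k1}, \rho_{k1}, \alpha_{k1}$ coincide, so only the $(k0)$-quantities vary. For the finite product, $|\la_{k0}^{(1)} - \la_{k0}^{(2)}| = |\rho_{k0}^{(1)} - \rho_{k0}^{(2)}|\,|\rho_{k0}^{(1)} + \rho_{k0}^{(2)}| \le C(\Omega)\delta_k$ on the bounded range $k \le p$, so telescoping yields $|P^{(1)} - P^{(2)}| \le C(\Omega)\sum_{k=1}^p \delta_k \le C(\Omega) Z$. For the infinite product, the $k$-th factors differ by at most $|\la_{k0}^{(1)} - \la_{k0}^{(2)}|/|\la - \la_{k1}| \le C k\delta_k/(\varepsilon k^2) = C\delta_k/k$, whence the product-difference inequality and $\sum_k \delta_k/k \le C Z$ give $|W^{(1)} - W^{(2)}| \le C(\Omega) Z$.

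\emph{The main obstacle} is the series difference $\Sigma^{(1)} - \Sigma^{(2)}$, whose $k$-th term is $\alpha_{k0}\tilde\varphi_{k0}^{[1]}(\pi)\varphi_{k0}(\pi)/(\la - \la_{k0})$ with all four factors data-dependent. I telescope over these factors and pair every difference with the uniform bounds and the decay $|\la - \la_{k0}| \ge \varepsilon k^2$. Concretely: (i) $|\alpha_{k0}^{(1)} - \alpha_{k0}^{(2)}| \le \delta_k$ by \eqref{defZ}; (ii) since $\tilde\varphi_{k0}^{[1]}(\pi) = -\rho_{k0}\sin(\rho_{k0}\pi)$ for the model, \eqref{eigen_asymp} gives $|\tilde\varphi_{k0}^{[1],(1)}(\pi) - \tilde\varphi_{k0}^{[1],(2)}(\pi)| \le C(\Omega) k\delta_k$; (iii) writing $\varphi_{k0}(\pi) = \tilde\varphi_{k0}(\pi) - g_{k0}(\pi)$, estimate~1 of Lemma~\ref{lem_tilde_est_12} together with the $l_2$-bound $\|\{g_{k0}^{(1)}(\pi) - g_{k0}^{(2)}(\pi)\}\|_{l_2} \le C(\Omega, K)Z$ (which follows by adding the $l_1$ double-difference and the $l_2$ single difference of Lemma~\ref{lem_g_12}) control $|\varphi_{k0}^{(1)}(\pi) - \varphi_{k0}^{(2)}(\pi)|$ after multiplying by $|\tilde\varphi_{k0}^{[1]}(\pi)| \le C(\Omega) k\xi_k$ and summing by Cauchy--Schwarz; and (iv) the denominator difference $\bigl|\,(\la - \la_{k0}^{(1)})^{-1} - (\la - \la_{k0}^{(2)})^{-1}\,\bigr| \le C k\delta_k/(\varepsilon^2 k^4)$ contributes $C\sum_k \xi_k\delta_k/k^2 \le C(\Omega) Z$. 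Each of the four pieces is bounded by $C(\Omega, K) Z$, which proves the claimed estimate; the argument for $r_2$ via \eqref{rec_r2} is identical.

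\emph{Step 2.} The $p+1$ nodes $\la_i = -6\Omega^2 - \vartheta_i$ built in Lemma~\ref{lem_cj_diff} depend only on $\Omega$ and lie in $\Lambda_{\mathcal E(\Omega), \varepsilon(\Omega), S^{(j)}}$ for both $j$, so Step~1 gives $|r_1^{(1)}(\la_i) - r_1^{(2)}(\la_i)| \le C(\Omega, K) Z$. As $r_1^{(1)} - r_1^{(2)}$ is a polynomial of degree at most $p$, the Lagrange interpolation formula through these nodes writes each $c_j^{(1)} - c_j^{(2)}$ as a fixed linear combination of the values at the $\la_i$, with weights depending only on $\{\la_i\}$ and hence on $\Omega$. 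This yields $|c_j^{(1)} - c_j^{(2)}| \le C(\Omega, K) Z$, and the estimate for the $d_j$ follows along the same lines.
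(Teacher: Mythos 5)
Your proposal is correct and follows essentially the same route as the paper: the identical three-term telescoping of $r_1^{(1)}-r_1^{(2)}$ into (finite-product difference)\,$\times$\,(bounded factors), (infinite-product difference)\,$\times$\,(bounded factors), and (series difference)\,$\times$\,(bounded factors), with the uniform bounds imported from Lemma~\ref{lem_cj_diff} and the coefficients recovered by Lagrange interpolation at the same $\Omega$-dependent nodes. The only difference is cosmetic: where the paper handles the infinite product by first factoring out $\prod \la_{k0}/\la_{k1}$ and delegates the series difference to ``the arguments of Lemma~\ref{lem_sigma_est_12}'', you estimate both directly by telescoping over the data-dependent factors, correctly using Lemma~\ref{lem_g_12} in its intended form with the factor $Z$ on the right-hand side.
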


\begin{proof}

    Here we provide the proof only for $|c_j^{(1)} - c_j^{(2)}|$. For $|d_j^{(1)} - d_j^{(2)}|$, the proof is analogous.

    Consider the compact set
    $$
    \Lambda_{\mathcal{E}, \varepsilon, S^{(1)}, S^{(2)}} = \{ \la \in \mathbb{C}: |\la| \le \mathcal{E}, |\la - \la_{ni}^{(j)}| \ge \varepsilon n^2, \quad n \ge 1, \quad i=0, 1, \quad j=1, 2 \}, \quad \mathcal{E}, \varepsilon > 0.
    $$
    Let us prove the uniform estimate
    \begin{gather}
        \label{diff_r1_12}
        |r_1^{(1)}(\la) - r_1^{(2)}(\la)| \le CZ, \quad S^{(1)}, S^{(2)} \in \mathcal{B}_{\Omega, K}, \quad \la \in \Lambda_{\mathcal{E}, \varepsilon, S^{(1)}, S^{(2)}}.
    \end{gather}
    Here and below, we denote by $C$ various positive constants depending on $\Omega$, $K$, $\mathcal{E}$, $\varepsilon$.
    
    From the reconstruction formula \eqref{rec_r1}, we can get:
    $$
    r_1^{(1)}(\la) - r_1^{(2)}(\la) = \sum\limits_{j=1}^{2} (-1)^{j+1} \prod\limits_{k=1}^{p} (\la - \la_{k0}^{(j)}) \prod\limits_{k=p+1}^{\infty}\dfrac{\la-\la_{k0}^{(j)}}{\la-\la_{k1}^{(j)}}\bigg( 1 - \sum\limits_{k=1}^{\infty} \dfrac{\alpha_{k0}^{(j)}\tilde\varphi_{k0}^{[1] (j)}(\pi)\varphi_{k0}^{(j)}(\pi)}{\la-\la_{k0}^{(j)}} \bigg).
    $$

    This expression can be represented in the following form:
    \begin{gather*}
        r_1^{(1)}(\la) - r_1^{(2)}(\la) = J_1(\la) + J_2(\la) + J_3(\la),
    \end{gather*}
    where
    \begin{align*}
        & J_1(\la) = \bigg( \prod\limits_{k=1}^p \Big( \la - \la^{(1)}_{k0} \Big) - \prod\limits_{k=1}^p \Big( \la - \la^{(2)}_{k0} \Big) \bigg) \prod\limits_{k=p+1}^{\infty}\dfrac{\la-\la_{k0}^{(1)}}{\la-\la_{k1}^{(1)}}\bigg( 1 - \sum\limits_{k=1}^{\infty} \dfrac{\alpha_{k0}^{(1)}\tilde\varphi_{k0}^{[1] (1)}(\pi)\varphi_{k0}^{(1)}(\pi)}{\la-\la_{k0}^{(1)}} \bigg),\\
        & J_2(\la) = \prod\limits_{k=1}^p \Big( \la - \la^{(2)}_{k0} \Big) \bigg( \prod\limits_{k=p+1}^{\infty}\dfrac{\la-\la_{k0}^{(1)}}{\la-\la_{k1}^{(1)}} - \prod\limits_{k=p+1}^{\infty}\dfrac{\la-\la_{k0}^{(2)}}{\la-\la_{k1}^{(2)}} \bigg)\bigg( 1 - \sum\limits_{k=1}^{\infty} \dfrac{\alpha_{k0}^{(1)}\tilde\varphi_{k0}^{[1] (1)}(\pi)\varphi_{k0}^{(1)}(\pi)}{\la-\la_{k0}^{(1)}} \bigg),\\
        & J_3(\la) = \prod\limits_{k=1}^p \Big( \la - \la^{(2)}_{k0} \Big)\prod\limits_{k=p+1}^{\infty}\dfrac{\la-\la_{k0}^{(2)}}{\la-\la_{k1}^{(2)}}\bigg( \sum\limits_{k=1}^{\infty} \dfrac{\alpha_{k0}^{(2)}\tilde\varphi_{k0}^{[1] (2)}(\pi)\varphi_{k0}^{(2)}(\pi)}{\la-\la_{k0}^{(2)}} - \sum\limits_{k=1}^{\infty} \dfrac{\alpha_{k0}^{(1)}\tilde\varphi_{k0}^{[1] (1)}(\pi)\varphi_{k0}^{(1)}(\pi)}{\la-\la_{k0}^{(1)}} \bigg).
    \end{align*}

    Using Lemma~\ref{lem_cj_diff}, we get:
    \begin{align}
        \label{J1}
        & |J_1(\la)| \le C\Bigg| \prod\limits_{k=1}^p \Big( \la - \la^{(1)}_{k0} \Big) - \prod\limits_{k=1}^p \Big( \la - \la^{(2)}_{k0} \Big) \Bigg|,\\
        \label{J2}
        & |J_2(\la)| \le C \Bigg| \prod\limits_{k=p+1}^{\infty}\dfrac{\la-\la_{k0}^{(1)}}{\la-\la_{k1}^{(1)}} - \prod\limits_{k=p+1}^{\infty}\dfrac{\la-\la_{k0}^{(2)}}{\la-\la_{k1}^{(2)}} \Bigg|,\\
        \label{J3}
        & |J_3(\la)| \le C \Bigg| \sum\limits_{k=1}^{\infty} \dfrac{\alpha_{k0}^{(2)}\tilde\varphi_{k0}^{[1] (2)}(\pi)\varphi_{k0}^{(2)}(\pi)}{\la-\la_{k0}^{(2)}} - \sum\limits_{k=1}^{\infty} \dfrac{\alpha_{k0}^{(1)}\tilde\varphi_{k0}^{[1] (1)}(\pi)\varphi_{k0}^{(1)}(\pi)}{\la-\la_{k0}^{(1)}} \Bigg|.
    \end{align}

    Let us start with the finite product from \eqref{J1}. To get the estimate for this expression, we will use mathematical induction. So, we need to prove that
    \begin{gather}
        \label{finite_est_12}
        \Bigg| \prod\limits_{k=1}^p \Big( \la - \la^{(1)}_{k0} \Big) - \prod\limits_{k=1}^p \Big( \la - \la^{(2)}_{k0} \Big)\Bigg| \le CZ.    
    \end{gather}

    Suppose that $p=1$. Then, in the view of \eqref{defZ}, there holds
    $$
    |\la - \la_{10}^{(1)} - \la + \la_{10}^{(2)}| = |\rho_{10}^{(2)} - \rho_{10}^{(1)}||\rho_{10}^{(2)} + \rho_{10}^{(1)}| \le CZ.
    $$
    
    Suppose that, for any $p = \overline{1, n-1}$, the inequality \eqref{finite_est_12} holds. Then we get:
    \begin{align*}
        &\Bigg| \prod\limits_{k=1}^n \Big( \la - \la^{(1)}_{k0} \Big) - \prod\limits_{k=1}^n \Big( \la - \la^{(2)}_{k0} \Big)\Bigg| \le  |\la|\Bigg| \prod\limits_{k=1}^{n-1} \Big( \la - \la^{(1)}_{k0} \Big) - \prod\limits_{k=1}^{n-1} \Big( \la - \la^{(2)}_{k0} \Big)\Bigg| + \\
        & |\la_{n0}^{(2)}|\Bigg| \prod\limits_{k=1}^{n-1} \Big( \la - \la^{(1)}_{k0} \Big) - \prod\limits_{k=1}^{n-1} \Big( \la - \la^{(2)}_{k0} \Big)\Bigg| + |\la_{n0}^{(2)} - \la_{n0}^{(1)}| \Bigg| \prod\limits_{k=1}^{n-1} \Big( \la - \la^{(1)}_{k0} \Big)\Bigg|\le CZ.
    \end{align*}

    Next, let us introduce the difference of the infinite products from \eqref{J2}.
    \begin{align*}
        \Bigg| \prod\limits_{k=p+1}^{\infty}\dfrac{\la-\la_{k0}^{(1)}}{\la-\la_{k1}^{(1)}} - \prod\limits_{k=p+1}^{\infty}\dfrac{\la-\la_{k0}^{(2)}}{\la-\la_{k1}^{(2)}} \Bigg| \le & \Bigg| \prod\limits_{k=p+1}^{\infty}\dfrac{1-\frac{\la}{\la_{k0}^{(1)}}}{1-\frac{\la}{\la_{k1}}}\Bigg|\Bigg| \prod\limits_{k=p+1}^{\infty}\dfrac{\la_{k0}^{(1)}}{\la_{k1}} - \prod\limits_{k=p+1}^{\infty}\dfrac{\la_{k0}^{(2)}}{\la_{k1}} \Bigg| + \\
        & \Bigg| \prod\limits_{k=p+1}^{\infty}\dfrac{\la_{k0}^{(2)}}{\la_{k1}} \Bigg| \Bigg| \prod\limits_{k=p+1}^{\infty}\dfrac{1-\frac{\la}{\la_{k0}^{(1)}}}{1-\frac{\la}{\la_{k1}}} - \prod\limits_{k=p+1}^{\infty}\dfrac{1-\frac{\la}{\la_{k0}^{(2)}}}{1-\frac{\la}{\la_{k1}}} \Bigg|.
    \end{align*}

    Now, we investigate each term separately.

    \begin{enumerate}

        \item $\Bigg| \prod\limits_{k=p+1}^{\infty}\dfrac{\la_{k0}^{(2)}}{\la_{k1}} \Bigg|$.
        Let us take the logarithm of this product:
        \begin{align*}
            \ln \prod\limits_{k=p+1}^{\infty}\dfrac{\la_{k0}^{(2)}}{\la_{k1}} = \sum\limits_{k=p+1}^{\infty} \ln\dfrac{\la_{k0}^{(2)}}{\la_{k1}} = \sum\limits_{k=p+1}^{\infty} \Bigg( \dfrac{2\varkappa_{k}^{(2)}}{k^2} + O\Bigg( \dfrac{(\xi_{k}^{(2)})^{2}}{k^2} \Bigg)\Bigg).
        \end{align*}
        Then,
        $$
        \Bigg| \ln \prod\limits_{k=p+1}^{\infty}\dfrac{\la_{k0}^{(2)}}{\la_{k1}} \Bigg| \le C,
        $$
        and, consequently,
        $$
        \Bigg| \prod\limits_{k=p+1}^{\infty}\dfrac{\la_{k0}^{(2)}}{\la_{k1}} \Bigg| \le C.
        $$

        \item $\Bigg| \prod\limits_{k=p+1}^{\infty}\dfrac{\la_{k0}^{(1)}}{\la_{k1}} - \prod\limits_{k=p+1}^{\infty}\dfrac{\la_{k0}^{(2)}}{\la_{k1}} \Bigg|$.

        Here we will subsequently separate the terms with $|\la_{k0}^{(1)} - \la_{k0}^{(2)}|$. So,

        $$
        \Bigg| \prod\limits_{k=p+1}^{\infty}\dfrac{\la_{k0}^{(1)}}{\la_{k1}} - \prod\limits_{k=p+1}^{\infty}\dfrac{\la_{k0}^{(2)}}{\la_{k1}} \Bigg| \le \sum\limits_{k=p+1}^{\infty} \Bigg| \dfrac{\la_{k0}^{(1)} - \la_{k0}^{(2)}}{\la_{k1}} \Bigg| \Bigg| \prod\limits_{n=p+1}^{k-1}\dfrac{\la_{n0}^{(2)}}{\la_{n1}} \Bigg| \Bigg| \prod\limits_{n=k+1}^{\infty}\dfrac{\la_{n0}^{(1)}}{\la_{n1}} \Bigg|.
        $$

        Due to the previous point and taking the estimate
        $$
        \Bigg| \dfrac{\la_{k0}^{(1)} - \la_{k0}^{(2)}}{\la_{k1}} \Bigg| \le \Bigg| \dfrac{2\delta_k}{k} \Bigg| + \Bigg| \dfrac{\delta_k\varkappa_k^{(1)}}{k^2} \Bigg| + \Bigg| \dfrac{\delta_k\varkappa_k^{(1)}}{k^2} \Bigg|,
        $$
        into account, we obtain
        $$
        \Bigg| \prod\limits_{k=p+1}^{\infty}\dfrac{\la_{k0}^{(1)}}{\la_{k1}} - \prod\limits_{k=p+1}^{\infty}\dfrac{\la_{k0}^{(2)}}{\la_{k1}} \Bigg| \le CZ.
        $$
        
        \item $\Bigg| \prod\limits_{k=p+1}^{\infty}\dfrac{1-\frac{\la}{\la_{k0}^{(1)}}}{1-\frac{\la}{\la_{k1}}}\Bigg| \le C$.

        This point can be proved with the same technique, as in the first point. 
        
        \item $\Bigg| \prod\limits_{k=p+1}^{\infty}\dfrac{1-\frac{\la}{\la_{k0}^{(1)}}}{1-\frac{\la}{\la_{k1}}} - \prod\limits_{k=p+1}^{\infty}\dfrac{1-\frac{\la}{\la_{k0}^{(2)}}}{1-\frac{\la}{\la_{k1}}} \Bigg| \le CZ$.

        This point can be proved with the same technique, as in the second point. 
    \end{enumerate}

    Then, we conclude that
    \begin{gather}
    \label{infinite_r1_12}
    \Bigg| \prod\limits_{k=p+1}^{\infty}\dfrac{\la-\la_{k0}^{(1)}}{\la-\la_{k1}^{(1)}} - \prod\limits_{k=p+1}^{\infty}\dfrac{\la-\la_{k0}^{(2)}}{\la-\la_{k1}^{(2)}} \Bigg| \le CZ.
    \end{gather}

    Finally, using the arguments from the proof of Lemma~\ref{lem_sigma_est_12}, we get the following estimate:
    \begin{gather}
    \label{sums_r1_12}
    \Bigg| \sum\limits_{k=1}^{\infty} \dfrac{\alpha_{k0}^{(2)}\tilde\varphi_{k0}^{[1] (2)}(\pi)\varphi_{k0}^{(2)}(\pi)}{\la-\la_{k0}^{(2)}} - \sum\limits_{k=1}^{\infty} \dfrac{\alpha_{k0}^{(1)}\tilde\varphi_{k0}^{[1] (1)}(\pi)\varphi_{k0}^{(1)}(\pi)}{\la-\la_{k0}^{(1)}} \Bigg| \le CZ.
    \end{gather}

    So, substituting \eqref{finite_est_12}, \eqref{infinite_r1_12}, \eqref{sums_r1_12} into \eqref{J1}, \eqref{J2}, \eqref{J3} respectively, we get that $|J_j(\la)| \le CZ$, $j=\overline{1, 3}$, $\la \in \Lambda_{\mathcal{E}, \varepsilon, S^{(1)}, S^{(2)}}$, and consequently the estimate \eqref{diff_r1_12}.

    Using the technique from step~2 of the proof of Lemma~\ref{lem_cj_diff}, we obtain the desired estimates for the polynomial coefficients.
\end{proof}

Lemmas~\ref{reconstruction}, \ref{lem_sigma_est_12}, and \ref{diff_cdj_12} together imply Theorem~\ref{thm_uni_stab}.

\section{Non-solvability conditions} \label{sec:solvability}

Let us discuss the case when the spectral data $S = \{ \la_n, \alpha_n\}_{n \ge 1}$ contain multiple eigenvalues or zero weight numbers. In principle, Theorems~\ref{thm_uni_bound} and~\ref{thm_uni_stab} are valid for this case. 
However, the invertibility for the operator $(E + \tilde H(x))$ is crucial for the inverse problem solvability. In this section, we show that this operator is not invertible if eigenvalue multiplicities or the number of zero weight numbers are too large.
 
Suppose that $S \in \mathcal S_p$, $p \ge 0$. Denote 
\begin{align*}
& A = A(S) := \# \{ n \ge 1 \colon \alpha_n =  0\}, \\
& \mathcal I = \mathcal I(S) := \{ n \ge 1 \colon (n = 1 \: \text{or} \: \la_n \ne \la_k \: \text{for all} \: k < n) \: \text{and} \: \alpha_n \ne 0\}, \\
& m_k = m_k(S) := \# \{ n \ge 1 \colon \la_n = \la_k \: \text{and} \: \alpha_n \ne 0\}, \quad k \in \mathcal I.
\end{align*}
 
Thus, $\mathcal I$ is the index set of all the distinct eigenvalues among $\{ \la_n \}_{n \ge 1}$ excluding the ones corresponding to $\alpha_n = 0$, and $m_k$ is the multiplicity of $\la_k$. In view of \eqref{eigen_asymp}, we have $m_k = 1$ for all sufficiently large indices $k$, so the following sum is finite:
$$
B = B(S) := \sum_{k \in \mathcal I} (m_k - 1).
$$

\begin{thm} \label{thm:inv}
Suppose that $S \in \mathcal S_p$ and $A(S) + B(S) \ge p+1$.
Then, the operator $E+\tilde H(\pi)$ is non-invertible on $m$.
\end{thm}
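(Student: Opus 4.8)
The plan is to prove non-invertibility by exhibiting a nonzero vector in $\ker(E+\tilde H(\pi))$. Since blockwise $\tilde H(\pi)=T\tilde Q(\pi)T^{-1}$ with each $T_n$ invertible, a vector $v=Ta$ satisfies $(E+\tilde H(\pi))v=T(E+\tilde Q(\pi))a$, so it suffices to produce $a$ with $(E+\tilde Q(\pi))a=0$ and then verify that $v=Ta$ actually lies in $m$. I will manufacture such an $a$ out of a single scalar function $G\in L_2(0,\pi)$. Indeed, using $\tilde D(\pi,\rho_{ni}^2,\rho_{kj}^2)=\int_0^\pi\cos(\rho_{ni}t)\cos(\rho_{kj}t)\,dt$ and the sign pattern in $\tilde Q_{nk}$, a direct computation gives, for any sequence $a$,
$$\big((E+\tilde Q(\pi))a\big)_{ni}=a_{ni}+\int_0^\pi\cos(\rho_{ni}t)\,G_a(t)\,dt,\qquad G_a(t)=\sum_{k}\big(\alpha_{k0}a_{k0}\cos(\rho_{k0}t)-\alpha_{k1}a_{k1}\cos(\rho_{k1}t)\big).$$
Thus $a\in\ker(E+\tilde Q(\pi))$ forces $a_{ni}=-\langle\cos(\rho_{ni}\cdot),G_a\rangle_{L_2(0,\pi)}$, and conversely any $G$ that is self-consistent under this substitution yields a kernel vector.

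The key simplification is the role of the model frequencies. Because $\rho_{k1}=\tilde\rho_k$ runs through $0,1,2,\dots$ and the weights $\alpha_{k1}=\tilde\alpha_k$ are exactly $\tfrac1\pi$ (for the constant) and $\tfrac2\pi$ (for $\cos(mt)$, $m\ge1$), the sum $\sum_k\alpha_{k1}\langle\cos(\tilde\rho_k\cdot),G\rangle\cos(\tilde\rho_k\cdot)$ is precisely the Fourier cosine expansion of $G$ on $(0,\pi)$, hence equals $G$ for every $G\in L_2(0,\pi)$. (This is exactly why $x=\pi$ is the critical point: on $(0,\pi)$ the model cosines form an orthogonal basis with these normalizations.) Substituting $a_{ni}=-\langle\cos(\rho_{ni}\cdot),G\rangle$ back into $G_a$ and using this identity, the self-consistency condition collapses to $\sum_k\alpha_{k0}\langle\cos(\rho_{k0}\cdot),G\rangle\cos(\rho_{k0}\cdot)=0$, i.e.
$$\ker(E+\tilde Q(\pi))\ \cong\ V^\perp,\qquad V:=\overline{\mathrm{span}}\,\{\cos(\rho_{k0}t)\colon \alpha_{k0}\neq0\}\subset L_2(0,\pi).$$
So producing a kernel vector reduces to showing that the cosine system indexed by the distinct nonzero-weight eigenvalues (that is, by $\mathcal I$) is \emph{incomplete} in $L_2(0,\pi)$.

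The main obstacle is precisely this incompleteness claim, which is where the hypothesis $A(S)+B(S)\ge p+1$ enters. By the asymptotics $\rho_{k0}=k-p-1+\varkappa_k$, the frequencies cluster so that, in absolute value, each of $1,\dots,p$ appears twice while $0,p+1,p+2,\dots$ appear once; thus the full system $\{\cos(\rho_{k0}t)\}_{k\ge1}$ is complete with excess exactly $p$ (this $p$-fold redundancy near the origin is the spectral signature of the degree-$p$ boundary polynomial). Passing to $\mathcal I$ deletes one function for each zero weight and one for each eigenvalue coincidence, i.e. $A+B$ functions in total. When $A+B\ge p+1$ we delete more than the excess, so the deficiency $A+B-p\ge1$ and $V\neq L_2(0,\pi)$. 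Rigorously I would phrase this through the cosine (equivalently, exponential) transform: $G\in V^\perp$ corresponds to a nonzero even entire function $F=\hat G$ of exponential type $\le\pi$ with $F\in L_2(\mathbb R)$ and $F(\rho_{l0})=0$ for all $l\in\mathcal I$, and the density/excess bound guarantees a nontrivial such $F$. Establishing this excess count cleanly — via Paley–Wiener theory and the known completeness/excess criteria for such systems (as used in the references underlying the main equation) — is the technical heart of the argument.

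Finally, I must carry the kernel vector back to the space $m$. Fix $G\in V^\perp$, $G\neq0$, set $a_{ni}=-\langle\cos(\rho_{ni}\cdot),G\rangle$ (bounded, so $a\in m$), and $v=Ta$. The entry $v_{n1}=a_{n1}$ is just a Fourier coefficient of $-G$, hence bounded and not identically zero, so $v\neq0$. The only delicate entry is $v_{n0}=\hat\rho_n^{-1}(a_{n0}-a_{n1})=-\hat\rho_n^{-1}\langle\cos(\rho_{n0}\cdot)-\cos(\tilde\rho_n\cdot),G\rangle$; using $|\cos(\rho_{n0}t)-\cos(\tilde\rho_n t)|\le|\hat\rho_n|\,t$ one gets $|v_{n0}|\le\pi^{3/2}\|G\|_{L_2}$, so $v\in m$. (Indices with $\hat\rho_n=0$ are handled by the derivative form of $T_n$ in the obvious way and do not affect the conclusion.) Then $(E+\tilde H(\pi))v=T(E+\tilde Q(\pi))a=0$, giving a nonzero kernel element and proving that $E+\tilde H(\pi)$ is non-invertible on $m$.
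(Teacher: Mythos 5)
Your proposal is correct and runs on the same mechanism as the paper's proof: work at $x=\pi$, where the model cosines with the model weights $\tilde\alpha_n$ resum to the $L_2(0,\pi)$ Fourier cosine expansion, so that the kernel condition for $E+\tilde Q(\pi)$ collapses to orthogonality of a generating function to $\{\cos(\rho_{k0}t)\colon \alpha_{k0}\neq 0\}$; non-invertibility then follows from incompleteness of that system, and membership of the transformed vector in $m$ is checked via the estimate $|a_{n0}-a_{n1}|\le C|\hat\rho_n|\,\|G\|_{L_2}$. This is exactly the content of the paper's Lemma~\ref{lem:solv} together with the computation \eqref{D_at_pi}--\eqref{int_eq}, and your bound on $v_{n0}$ is the paper's verification of \eqref{est_sol}. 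The one organizational difference worth noting: the paper treats only the two extreme cases ($A=0$ with $B\ge p+1$, and $B=0$ with $A\ge p+1$) and asserts that the general case is obtained "by their combination," whereas your formulation via $V=\overline{\mathrm{span}}\{\cos(\rho_{k0}t)\colon\alpha_{k0}\neq 0\}$ handles arbitrary $A+B\ge p+1$ in one stroke, which is arguably cleaner. The step you correctly flag as the technical heart --- that the perturbed cosine system has excess exactly $p$, so deleting $A+B\ge p+1$ elements forces a nonzero deficiency --- is asserted in the paper with essentially the same one-line appeal to the asymptotics \eqref{eigen_asymp}, so your treatment is not less rigorous than the original; just be aware that the identity $(E+\tilde H(\pi))Ta=T(E+\tilde Q(\pi))a$ must be replaced by the derivative form of \eqref{Hnk} in the rows with $\hat\rho_n=0$, which is precisely what Lemma~\ref{lem:solv} packages for you.
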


In order to prove Theorem~\ref{thm:inv}, we need the following auxiliary fact, which is similar to \cite[Lemma~4.1]{BondR25}:

\begin{lem}
\label{lem:solv}
    Let $x \in [0, \pi]$ be fixed. Then for $S \in \mathcal{S}_p$ the operator $E+\tilde H(x)$ is invertible on $m$ iff the following infinite system
    \begin{align}
        \notag
        \gamma_{ni} & + \sum\limits_{k=1}^{p+1}\alpha_{k0}\tilde D(x, \la_{ni}, \la_{k0})\gamma_{k0} -\dfrac{1}{\pi}\tilde D(x, \la_{ni}, 0)\gamma_{11} \\
        \label{system_inv}
        &+ \sum\limits_{k=p+2}^{\infty}(\alpha_{k0}\tilde D(x, \la_{ni}, \la_{k0})\gamma_{k0} - \alpha_{k1}\tilde D(x, \la_{ni}, \la_{k1})\gamma_{k1}) = 0, \quad n \ge 1, \quad i=0,1
    \end{align}
    does not have a non-trivial solution $\{ \gamma_{ni} \}$ in $m$, satisfying the estimates
    \begin{gather}
    \label{est_sol}
        |\gamma_{ni}| \le 1, \quad |\gamma_{n0} - \gamma_{n1}| \le |\hat\rho_n|, \quad n \ge 1, \quad i = 0, 1.
    \end{gather}
\end{lem}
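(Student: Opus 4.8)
The plan is to identify the infinite system \eqref{system_inv} with the componentwise form of the homogeneous equation $(E+\tilde Q(x))\gamma = 0$ in $m$, and then to transfer this to $E+\tilde H(x)$ through the block transform $T$. Writing out the generic row of $E+\tilde Q(x)$ at the index $(n,i)$ gives $\gamma_{ni} + \sum_{k\ge 1}\big(\alpha_{k0}\tilde D(x,\la_{ni},\la_{k0})\gamma_{k0} - \alpha_{k1}\tilde D(x,\la_{ni},\la_{k1})\gamma_{k1}\big)$; substituting the model weight numbers $\alpha_{k1}=\tilde\alpha_k$ from \eqref{model_sd} (equal to $\tfrac{1}{\pi}$ for $k=1$ with $\la_{11}=0$, to $0$ for $2\le k\le p+1$, and to $\tfrac{2}{\pi}$ for $k\ge p+2$) collapses this expression exactly into the left-hand side of \eqref{system_inv}. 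The estimates \eqref{est_sol} are precisely the conditions under which the tail $\sum_{k\ge p+2}\big(\alpha_{k0}\tilde D(x,\la_{ni},\la_{k0})\gamma_{k0}-\alpha_{k1}\tilde D(x,\la_{ni},\la_{k1})\gamma_{k1}\big)$ converges absolutely: the bound $|\gamma_{k0}-\gamma_{k1}|\le|\hat\rho_k|$ compensates the slow decay of the general term through the same pairing already used in Lemma~\ref{prop_varphi} and Lemma~\ref{lem_qt_oper}.

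Next I would use the blockwise identity $E+\tilde H(x)=T\,(E+\tilde Q(x))\,T^{-1}$, which is immediate from the definition $\tilde H_{nk}=T_n\tilde Q_{nk}T_k^{-1}$ in \eqref{Hnk}, understood as a limit for the degenerate blocks $\hat\rho_n=0$ (where the first row of $T_n\tilde Q_{nk}$ is replaced by its $\rho$-derivative). On this basis the correspondence $\gamma=T^{-1}\psi$, $\psi=T\gamma$ links $\ker(E+\tilde H(x))$ with the solutions of \eqref{system_inv}. Given $\psi\in m$, $\psi\ne 0$, with $(E+\tilde H(x))\psi=0$, the vector $\gamma=T^{-1}\psi$ satisfies $\gamma_{n1}=\psi_{n1}$ and $\gamma_{n0}-\gamma_{n1}=\hat\rho_n\psi_{n0}$, whence $|\gamma_{n1}|\le\|\psi\|_m$ and $|\gamma_{n0}-\gamma_{n1}|\le|\hat\rho_n|\,\|\psi\|_m$; since $\hat\rho_n=\varkappa_n$ for $n>p+1$ and only finitely many indices are exceptional, $\sup_n|\hat\rho_n|<\infty$, so after rescaling $\gamma$ by a constant depending only on $\|\psi\|_m$ and $\sup_n|\hat\rho_n|$ one obtains a nontrivial solution of $(E+\tilde Q(x))\gamma=0$ obeying \eqref{est_sol}. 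Conversely, a nontrivial $\gamma$ satisfying \eqref{est_sol} produces $\psi=T\gamma$ with $|\psi_{n0}|=|\hat\rho_n^{-1}(\gamma_{n0}-\gamma_{n1})|\le 1$ and $|\psi_{n1}|\le 1$, hence $\psi\in m\setminus\{0\}$ and $(E+\tilde H(x))\psi=T(E+\tilde Q(x))\gamma=0$; in a degenerate block the constraint $|\gamma_{n0}-\gamma_{n1}|\le|\hat\rho_n|=0$ forces $\gamma_{n0}=\gamma_{n1}$, and the missing component $\psi_{n0}$ is recovered from the derivative (first) row of the kernel equation, consistently with the $\hat\rho_n\to 0$ limit.

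Finally I would invoke the Fredholm alternative to pass from injectivity to invertibility. The estimate $|\tilde H_{ni,kj}(x)|\le C(\Omega)\xi_k/(|n-k|+1)$ of Lemma~\ref{lem_tilde_est} with $\{\xi_k\}\in l_2$ shows, via the maximal-row-sum formula for the $m\to m$ norm and Cauchy--Schwarz, that the finite sections of $\tilde H(x)$ converge to it in operator norm; each section is finite-rank with range in $m$, so $\tilde H(x)$ is compact and $E+\tilde H(x)$ is Fredholm of index zero. Therefore $E+\tilde H(x)$ is invertible on $m$ if and only if it is injective, and by the correspondence above the latter is equivalent to \eqref{system_inv} having no nontrivial solution subject to \eqref{est_sol}, as claimed. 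I expect the main difficulty to lie not in the Fredholm bookkeeping but in the interplay between the unboundedness of $T$ and $T^{-1}$ on $m$ and the convergence of the series in \eqref{system_inv}: one must check that the map $\psi\leftrightarrow\gamma$ stays inside $m$ in both directions and that the degenerate blocks $\hat\rho_n=0$ are correctly absorbed by the derivative definitions, which is exactly where the normalization \eqref{est_sol} does the real work.
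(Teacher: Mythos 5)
The paper gives no proof of this lemma at all --- it is only stated to be ``similar to \cite[Lemma~4.1]{BondR25}'' --- but your argument is precisely the standard one that such a reference follows: compactness of $\tilde H(x)$ on $m$ (norm limit of column truncations via estimate~7 of Lemma~\ref{lem_tilde_est} and Cauchy--Schwarz) reduces invertibility to injectivity by the Fredholm alternative, and the blockwise conjugation by $T$ identifies $\ker(E+\tilde H(x))$ with the nontrivial solutions of \eqref{system_inv} obeying \eqref{est_sol}, the second estimate in \eqref{est_sol} being exactly what makes $\psi=T\gamma$ bounded and the series absolutely convergent. The one step I would ask you to write out explicitly is the non-triviality of $\gamma=T^{-1}\psi$ in the forward direction when $\psi$ could a priori be supported only on the first components of degenerate blocks ($\hat\rho_n=0$): there the first (derivative) row of the block equation reads $\psi_{n0}+\sum_k[\dots]=0$ with the bracket depending only on the already-vanishing $\gamma$-components, so it forces $\psi_{n0}=0$ and the kernel correspondence is faithful --- the same degenerate-block bookkeeping you sketch for the converse direction, and note that such blocks may be infinitely many, so one should not rely on there being only finitely many of them.
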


\begin{proof}[Proof of Theorem~\ref{thm:inv}]
    At first, consider equation~\eqref{system_inv} at the point $x=\pi$:
    \begin{align}
        \notag
        \gamma_{ni} & + \sum\limits_{k=1}^{p+1}\alpha_{k0}\tilde D(\pi, \la_{ni}, \la_{k0})\gamma_{k0} -\dfrac{1}{\pi}\tilde D(\pi, \la_{ni}, 0)\gamma_{11} \\
        \label{system_inv_pi}
        &+ \sum\limits_{k=p+2}^{\infty}(\alpha_{k0}\tilde D(\pi, \la_{ni}, \la_{k0})\gamma_{k0} - \alpha_{k1}\tilde D(\pi, \la_{ni}, \la_{k1})\gamma_{k1}) = 0, \quad n \ge 1, \quad i=0,1.
    \end{align}
    According to \eqref{D_def} and \eqref{model_sd}, we have:
    \begin{gather}
    \label{D_at_pi}
        \tilde D(\pi, \la_{n1}, \la_{k1}) = \left\{ \begin{aligned} 
            \pi, \quad n, k \le p + 1,\\
            \frac{\pi}{2} \delta_{nk}, \quad n , k > p + 1, \\
            0, \quad \text{otherwise},
            \end{aligned} \right. 
    \end{gather}
    where $\delta_{nk}$ is the Kronecker delta. Substituting \eqref{D_at_pi} into \eqref{system_inv_pi}, we get for $i=1$:
    \begin{gather}
        \label{system_i}
        \left\{ \begin{aligned} 
            \gamma_{n1}+\sum_k \alpha_{k0}\tilde D(\pi, 0, \la_{k0})\gamma_{k0}-\gamma_{11}=0, \quad n \le p + 1,\\
            \sum_k \alpha_{k0}\tilde D(\pi, \la_{n1}, \la_{k0})\gamma_{k0} = 0, \quad n  \ge p + 2.
            \end{aligned} \right. 
    \end{gather}
    
    In order to show the main idea, we consider two cases $A(S) = 0$ and $B(S) = 0$. The general case can be obtained by their combination.
    
    \textbf{\textit{Case 1:}} Let all $\alpha_k \neq 0$, $k \ge 1$, and $B(S) \ge p + 1$.
    Let $\gamma_{n0} = 0$, $n \ge 1$. Then, from \eqref{system_i} we get that $\gamma_{n1}=\gamma_{11}$, $2 \le n \le p+ 1$. Next, for $i=0$, the system \eqref{system_inv_pi} takes the form
    $$
    \alpha_{11}\gamma_{11}\tilde D(\pi, \la_{n0}, 0) + \sum\limits_{k=p+2}^\infty \alpha_{k1}\gamma_{k1}\tilde D(\pi, \la_{n0}, \la_{k1}) = 0, \quad n \ge 1.
    $$
    Due to \eqref{D_def} we get:
    \begin{gather}
    \label{int_eq}
    \int\limits_{0}^{\pi}\gamma(t)\cos\rho_{n0}t\,dt = 0, \quad n \ge 1, \quad \gamma(t) = \sum\limits_{k=p+2}^\infty \frac{2}{\pi}\gamma_{k1}\cos\rho_{k1}t + \frac{1}{\pi}\gamma_{11}.
    \end{gather}
    As $B(S) \ge p + 1$ and the asymptotics \eqref{eigen_asymp} holds for $\rho_{n0}$, then $\{ \cos\rho_{n0}t \}_{n\ge 1}$ is not complete in $L_2(0, \pi)$. Then, there exists a non-trivial function $\gamma(t)$ satisfying \eqref{int_eq}. It means, that $\gamma_{11}$ (and consequently $\gamma_{n1}$, $2 \le n \le p+1$) and $\gamma_{k1}$, $k \ge p+2$, can be found from the Fourier coefficients of $\gamma(t)$ with respect to the orthogonal basis $\{ \cos(k-p-1)t \}_{k \ge p+1}$. So, we have found a non-trivial solution of the system \eqref{system_inv_pi}. Moreover, using \eqref{int_eq} we can simply get the estimates \eqref{est_sol} as follows:
    \begin{align*}
        |\gamma_{n0}-\gamma_{n1}| & = \Bigg| \int\limits_0^\pi \gamma(t)\cos\rho_{n0}t \, dt - \int\limits_0^\pi \gamma(t)\cos\rho_{n1}t \, dt \Bigg| \\
        &\le \| \gamma(t) \|_{L_2(0, \pi)} \sqrt{\int\limits_0^\pi (|\hat\rho_n^2|t^2\sin^2(n-p-1)t + O(|\hat\rho_n^3|)) \, dt} \\
        & \le C|\hat\rho_n|.
    \end{align*}
    Dividing the solution by the constant $C$, we arrive at \eqref{est_sol}.
    Then, by virtue of Lemma~\ref{lem:solv}, the operator $E+\tilde H(\pi)$ is non-invertible on $m$.    
    
    \textbf{\textit{Case 2:}} Let all $\la_{n0}$ be simple. Without loss of generality suppose that the first $p+1$ values $\alpha_{n} := \alpha_{n0}$ are zeros: $\alpha_1 = \alpha_2 = \dots = \alpha_{p+1} = 0$.
    Let $\gamma_{n0} = 0$, $n \ge p+2$, and $\gamma_{p+1, 0} = 1$. The values $\gamma_{n0}$, $1 \le 1 \le p$, are considered unknown. Then, from \eqref{system_i} we get that $\gamma_{n1} = \gamma_{11}$, $2 \le n \le p+1$. Next, for $i=0$ system \eqref{system_inv_pi} takes the form
    $$
    \sum\limits_{k=p+2}^\infty \frac{2}{\pi}\tilde D(\pi, \la_{n0}, \la_{k1})\gamma_{k1} + \frac{1}{\pi}\tilde D(\pi, \la_{n0}, 0)\gamma_{11}= \gamma_{n0}, \quad n \ge 1.
    $$
    Due to \eqref{D_def}, we get:
    \begin{gather}
        \label{int_case2_1}
        \int\limits_{0}^{\pi}\gamma(t)\cos\rho_{n0}t\,dt = \gamma_{n0}, \quad 1 \le n \le p, \\
        \label{int_case2_2}
        \int\limits_{0}^{\pi}\gamma(t)\cos\rho_{p+1, 0}t\,dt = 1, \\
        \label{int_case2_3}
        \int\limits_{0}^{\pi}\gamma(t)\cos\rho_{n0}t\,dt = 0, \quad n \ge p+2,\\
        \notag
        \gamma(t) = \frac{1}{\pi}\gamma_{11} + \sum\limits_{k=p+2}^\infty \frac{2}{\pi}\gamma_{k1}\cos\rho_{k1}t.
    \end{gather}
    
    As all $\rho_{n0}$ are distinct and due to \eqref{eigen_asymp} we conclude that $\{ \cos\rho_{n0}t \}_{n\ge p+1}$ is the Riesz basis in $L_2(0, \pi)$. Then, there exists the function $\gamma(t) \in L_2(0, \pi)$, which is uniquely defined by its Fourier coefficients with respect to the Riesz basis $\{ \cos\rho_{n0}t \}_{n \ge p+1}$, satisfying the system \eqref{int_case2_2}--\eqref{int_case2_3}. Using $\gamma(t)$, we can find the unknown values $\gamma_{n0}$, $1 \le n \le p$ by \eqref{int_case2_1}. Finally, we can obtain all $\gamma_{n1}$, $n \ge 1$, as the Fourier coefficients of $\gamma(t)$ with respect to the orthogonal basis $\{ \cos(k-p-1)t \}_{k \ge p+1}$. Moreover, using \eqref{int_case2_1}, we can simply get the estimates \eqref{est_sol}. So, we have found a non-trivial solution of the system \eqref{system_inv_pi}. Then, according to Lemma~\ref{lem:solv}, the operator $E+\tilde H(\pi)$ is non-invertible on $m$.
\end{proof}

\section{Examples} \label{sec:ex}

In this section, we present two examples, which demostrate the stability of the inverse problem for different degrees of the polynomials in the boundary conditions \eqref{bc1} for two problems $L^{(1)}$ and $L^{(2)}$.

Let us suppose that $p=1$. Then, the spectral data of the model problem $\tilde L = L(0, \la, 0)$ have the following form:
$$
\la_{n1} = \left\{ \begin{aligned} 
    0, \quad n=1,2, \\
    (n-2)^2, \quad n \ge 3; \\
\end{aligned} \right. \qquad
\alpha_{n1} = \left\{ \begin{aligned} 
    \dfrac{1}{\pi}, \quad n=1, \\
	0, \quad n=2, \\
    \dfrac{2}{\pi}, \quad n \ge 3. \\
\end{aligned} \right.
$$

\begin{example} \label{ex:1}
Introduce the spectral data of the problems $L^{(1)}$ and $L^{(2)}$ as follows:
$$
\la_{n0}^{(1)} = \left\{ \begin{aligned}
0, \quad n=1, \\
\eta^2, \quad n=2, \\
(n-2)^2, \quad n \ge 3;
\end{aligned} \right. \qquad
\alpha_{n0}^{(1)} = \left\{ \begin{aligned} 
\dfrac{1}{\pi}, \quad n=1, \\
0, \quad n=2, \\
\dfrac{2}{\pi}, \quad n \ge 3;
\end{aligned} \right.
$$
$$
\la_{n0}^{(2)} = \left\{ \begin{aligned} 
0, \quad n=1, \\
\eta^2, \quad n=2, \\
(n-2)^2, \quad n \ge 3;
\end{aligned} \right. \qquad
\alpha_{n0}^{(2)} = \left\{ \begin{aligned} 
\dfrac{1}{\pi}, \quad n=1, \\
\alpha, \quad n=2, \\
\dfrac{2}{\pi}, \quad n \ge 3,
\end{aligned} \right.
$$
where $\alpha \neq 0$ is a complex number, small by absolute value, and $\eta \neq 0$ is some complex number.

The corresponding Weyl functions have the form
\begin{align*}
M^{(1)}(\la) & = \frac{1}{\pi \la} + \frac{0}{\la -\eta^2}  + \sum_{n = 3}^{\infty} \frac{\alpha_{n1}}{\la - \la_{n1}}, \\
M^{(2)}(\la) & = \frac{1}{\pi \la} + \frac{\alpha}{\la -\eta^2}  + \sum_{n = 3}^{\infty} \frac{\alpha_{n1}}{\la - \la_{n1}}. \\
\end{align*}

In view of~\eqref{seriesMt}, there holds
$M^{(1)}(\la) \equiv \tilde M(\la)$. We just add a dummy eigenvalue $\eta^2$ with the zero weight number. The function $M^{(2)}(\la)$ is obtained by a small perturbation of $M^{(1)}(\la)$, which makes this additional weight number to be non-zero and actually adds the eigenvalue $\eta^2$.

Construct the operator $\tilde H^{(1)}(x)$. Using formula \eqref{Hnk}, we get
$$
\tilde H_{nk}^{(1)}(x) = 
\begin{pmatrix}
0 & 0 \\
0 & 0
\end{pmatrix}
, \quad n,k \ge 1.
$$

It means that the main equation \eqref{main_eq} has the form $\tilde\psi(x) = \psi(x)$. Then, $\varphi_{ni}^{(1)}(x) = \tilde \varphi_{ni}^{(1)}(x)$, $n \ge 1$, $i=0,1$. Then, from the reconstruction formulas \eqref{rec_sigma}, \eqref{rec_r1}, and \eqref{rec_r2} we get:
\begin{equation} \label{si0}
\sigma^{(1)}(x) = 0, \quad r_1^{(1)}(\la) = c_0^{(1)} + \la, \quad r_2^{(1)}(\la) = d_0^{(1)} + d_1^{(1)} \la = 0.
\end{equation}

Thus, $L^{(1)} = \tilde L$ as expected.
Next, we build the components of the operator $\tilde H^{(2)}(x)$ by~\eqref{Hnk}:
$$
\tilde H_{nk}^{(2)}(x) = \left\{ \begin{aligned} 
\begin{pmatrix}
0 & 0 \\
0 & 0
\end{pmatrix}
, \quad n \neq 2 \quad k \neq 2, \\
\begin{pmatrix}
\alpha \bigg( \dfrac{x}{2} + \dfrac{\sin2\eta x}{4\eta} + \dfrac{\sin\eta x}{\eta} \bigg) &  \dfrac{\alpha}{\eta} \bigg( \dfrac{x}{2} + \dfrac{\sin2\eta x}{4\eta} + \dfrac{\sin\eta x}{\eta} \bigg) \\
\alpha\sin\eta x & \dfrac{\alpha\sin\eta x}{\eta}
\end{pmatrix}
, \quad n = 2, \quad k = 2    .
\end{aligned} \right.
$$

Then we get $\| \tilde H^{(1)}(x) - \tilde H^{(2)}(x) \|_{m \to m} \le C|\alpha|$ for all $x \in [0,\pi]$. Since the operator $(E + \tilde H^{(1)}(x))$ is obviously invertible for each fixed $x \in [0,\pi]$, so does $(E + \tilde H^{(2)}(x))$ for sufficiently small $|\alpha|$. Moreover, the estimate $\| (E + \tilde H(x))^{-1} \|_{m \to m} \le C$ holds. So, we can solve the main equation \eqref{main_eq} for the spectral data of $L^{(2)}$ and find the functions $\sigma^{(2)}(x)$, 
$$
r_1^{(2)} (\la) = c_0^{(2)} + \la \quad \text{and} \quad r_2^{(2)} (\la) = d_0^{(2)} + d_1^{(2)}\la, 
$$
using \eqref{rec_sigma}, \eqref{rec_r1}, and \eqref{rec_r2} respectively.
Consequently, Theorem~\ref{thm_uni_stab} implies
$$
\|\sigma^{(1)}(x) - \sigma^{(2)}(x)\|_{L_2(0, \pi)} + |c_0^{(1)} - c_0^{(2)}| + |d_0^{(1)} - d_0^{(2)}| + |d_1^{(1)} - d_1^{(2)}| \le C|\alpha|.
$$

On the other hand, the polynomials $r_1^{(1)}(\la)$ and $r_2^{(1)}(\la)$ in \eqref{si0} are not relatively prime, so their degrees can be reduced: $r_1^{(1)}(\la) = 1$, $r_2^{(2)}(\la) = 0$ ($p= 0$). Indeed, the Weyl function $M^{(1)}(\la)$ coincides with the Weyl function for the problem $L(0,1,0)$ with the Neumann-Neumann boundary conditions. At the same time, the polynomials $r_1^{(2)}(\la)$ and $r_2^{(2)}(\la)$ are relatively prime and have degree $1$. Thus, the estimate for $\| \sigma^{(1)} - \sigma^{(2)} \|_{L_2(0,\pi)}$ is actually obtained for the two boundary value problems of form \eqref{eqv1}--\eqref{bc1} with polynomials of different degrees.
\end{example}

\begin{example}
Introduce the spectral data of the problems $L^{(1)}$ and $L^{(2)}$ as follows:
$$
\la_{n0}^{(1)} = \left\{ \begin{aligned} 
    0, \quad n=1,2, \\
    (n-2)^2, \quad n \ge 3;
\end{aligned} \right. \qquad
\alpha_{n0}^{(1)} = \left\{ \begin{aligned} 
    \alpha, \quad n=1, \\
    \dfrac{1}{\pi} - \alpha, \quad n=2, \\
    \dfrac{2}{\pi}, \quad n \ge 3;
\end{aligned} \right.
$$
$$
\la_{n0}^{(2)} = \left\{ \begin{aligned} 
    \la_{n0}^{(2)}, \quad n=1,2, \\
    (n-2)^2, \quad n \ge 3; \\
\end{aligned} \right. \qquad
\alpha_{n0}^{(2)} = \left\{ \begin{aligned} 
    \alpha, \quad n=1, \\
    \dfrac{1}{\pi} - \alpha, \quad n=2, \\
    \dfrac{2}{\pi}, \quad n \ge 3,
\end{aligned} \right.
$$
where $\alpha \neq 0$ is some complex number, $\la_{10}^{(2)} \neq \la_{20}^{(2)}$ are non-zero numbers close to $\la_{11}= 0$.

The corresponding Weyl functions have the form
\begin{align*}
M^{(1)}(\la) & = \frac{\alpha}{\la} + \left( \frac{1}{\pi} - \alpha\right)\frac{1}{\la} + \sum_{n = 3}^{\infty} \frac{\alpha_{n1}}{\la-\la_{n1}}, \\
M^{(2)}(\la) & = \frac{\alpha}{\la -\la_{10}^{(1)}} + \left( \frac{1}{\pi} - \alpha\right)\frac{1}{\la -\la_{20}^{(1)}} + \sum_{n = 3}^{\infty} \frac{\alpha_{n1}}{\la-\la_{n1}}.
\end{align*}

In view of \eqref{seriesMt}, we have $M^{(1)}(\la) \equiv \tilde M(\la)$, but the weight number $\alpha_{11} = \frac{1}{\pi}$ splits between two copies of the zero eigenvalue. Under a small perturbation, these two copies become two different eigenvalues $\la_{10}^{(2)}$ and $\la_{20}^{(2)}$ of $L^{(2)}$.

Let us start from the building of the operator $\tilde H^{(1)}(x)$. Using \eqref{Hnk} and simple calculations, we get
$$
\tilde H_{nk}^{(1)}(x) = \left\{ \begin{aligned} 
    \begin{pmatrix}
        0 & \bigg(\dfrac{1}{\pi}-\alpha \bigg) \rho_{n1}\int\limits_0^{\pi}\sin\rho_{n1}t \, dt \\
        0 & -\bigg(\dfrac{1}{\pi}-\alpha \bigg)\int\limits_0^{\pi}\cos\rho_{n1}t \, dt
    \end{pmatrix}
    , \quad n \ge 1, \quad k = 1, \\
    \begin{pmatrix}
        0 & \alpha\rho_{n1}\int\limits_0^{\pi}\sin\rho_{n1}t \, dt \\
        0 & -\alpha\int\limits_0^{\pi}\cos\rho_{n1}t \, dt
    \end{pmatrix}
    , \quad n \ge 1, \quad k = 2, \\
    \begin{pmatrix}
        0 & 0 \\
        0 & 0
    \end{pmatrix}
    , \quad n \ge 1, \quad k \ge 3.
\end{aligned} \right.
$$

Note that, in this case, it is more convenient to find the components $\varphi_{ni}^{(1)}(x)$ using not the main equation, but the relation of Lemma~\ref{prop_varphi}. So, for $i=0$ and $n=1,2$, we have the following system:
\begin{gather*}
    \left\{ \begin{aligned} 
    \varphi_{10}^{(1)}(x) = 1+x\Bigg( \dfrac{1}{\pi}\varphi(x, 0) - \alpha \varphi_{10}^{(1)}(x) - \bigg( \dfrac{1}{\pi} - \alpha \bigg) \varphi_{20}^{(1)}(x) \Bigg), \\
    \varphi_{20}^{(1)}(x) = 1+x\Bigg( \dfrac{1}{\pi}\varphi(x, 0) - \alpha \varphi_{10}^{(1)}(x) - \bigg( \dfrac{1}{\pi} - \alpha \bigg) \varphi_{20}^{(1)}(x) \Bigg), \\
    \varphi(x, 0) = 1+x\Bigg( \dfrac{1}{\pi}\varphi(x, 0) - \alpha \varphi_{10}^{(1)}(x) - \bigg( \dfrac{1}{\pi} - \alpha \bigg) \varphi_{20}^{(1)}(x) \Bigg).
\end{aligned} \right.
\end{gather*}

Then, $\varphi_{10}^{(1)}(x) = \varphi_{20}^{(1)}(x) = \varphi(x, 0) = 1$. For $i=1$, $n=1,2$ we can analogously get $\varphi_{11}^{(1)}(x) = \varphi_{21}^{(1)}(x) = 1$ and for $i=0,1$, $n \ge 3$: $\varphi_{ni}^{(1)}(x) = \cos(n-2)x$. So, using the reconstruction formulas \eqref{rec_sigma}--\eqref{rec_r2} we arrive at the coefficients \eqref{si0} of the problem $L^{(1)}$. As in Example~\ref{ex:1}, the problem $L^{(1)}$ is equivalent to the problem $L(0,1,0)$ with the Neumann-Neumann boundary conditions, which correspond to the degree $p = 0$ of the polynomials.

One can similarly construct the operator $\tilde H^{(2)}(x)$ and show that 
$$
\| \tilde H^{(1)}(x) - \tilde H^{(2)}(x) \|_{m \to m} \le C\varepsilon, \quad x \in [0,\pi],  \quad \text{if}\:\: |\rho_{n0}^{(2)}| \le \varepsilon, \: n = 1,2,
$$
where $\varepsilon > 0$ is sufficiently small.
Then, we can apply the Theorem~\ref{thm_uni_stab} and obtain the stability estimate 
$$
\|\sigma^{(1)}(x) - \sigma^{(2)}(x)\|_{L_2(0, \pi)} + |c_0^{(1)} - c_0^{(2)}| + |d_0^{(1)} - d_0^{(2)}| + |d_1^{(1)} - d_1^{(2)}| \le C\varepsilon
$$
for the problems $L^{(1)}$ and $L^{(2)}$, which after simplifying have different polynomial degrees.
\end{example}

\medskip

\textbf{Funding}: This work was supported by Grant 24-71-10003 of the Russian Science Foundation, https://rscf.ru/en/project/24-71-10003/.





\medskip

\medskip

\noindent Natalia Pavlovna Bondarenko \\
1. Department of Applied Mathematics, Samara National Research University,\\
Moskovskoye Shosse 34, Samara 443086, Russia.\\
2. S.M. Nikolskii Mathematical Institute, Peoples' Friendship University of Russia (RUDN University), 6 Miklukho-Maklaya Street, Moscow, 117198, Russia.\\
3. Moscow Center of Fundamental and Applied Mathematics, Lomonosov Moscow State University, Moscow 119991, Russia.\\
e-mail: {\it bondarenkonp@sgu.ru}\\

\medskip

\noindent Egor Evgenevich Chitorkin \\
1. Institute of IT and Cybernetics, Samara National Research University,\\ 
Moskovskoye Shosse 34, Samara 443086, Russia. \\
2. Department of Mechanics and Mathematics, Saratov State University, \\
Astrakhanskaya 83, Saratov 410012, Russia. \\
e-mail: {\it chitorkin.ee@ssau.ru} 
\end{document}